\newtheorem{thm}{Theorem}[section]
\newtheorem{cor}[thm]{Corollary}
\newtheorem{lem}[thm]{Lemma}
\newtheorem{obv}[thm]{Observation}
\theoremstyle{definition}
\newtheorem{defin}[thm]{Definition}
\theoremstyle{remark}
\newtheorem*{rem}{Remark} 
\theoremstyle{definition}
\newtheorem{ex}[thm]{Example}
 \definecolor{helena}{rgb}{.2,.8,.4}
\def\ps@pprintTitle{%
 \let\@oddhead\@empty
 \let\@evenhead\@empty
 \def\@oddfoot{}%
 \let\@evenfoot\@oddfoot}
\begin{document}

\allowdisplaybreaks

\begin{frontmatter}

\title{Connecting sufficient conditions for the Symmetric Nonnegative Inverse Eigenvalue Problem}

\author{Richard Ellard\corref{FundingThanks}}
\ead{richardellard@gmail.com}

\author{Helena \v{S}migoc\corref{FundingThanks}}
\ead{helena.smigoc@ucd.ie}

\cortext[FundingThanks]{The authors' work was supported by Science Foundation Ireland under Grant 11/RFP.1/MTH/3157.}

\address{
	School of Mathematical Sciences, \\ 
	University College Dublin, \\ 
	Belfield, Dublin 4, Ireland
}

\begin{abstract}
We say that a list of real numbers is ``symmetrically realisable'' if it is the spectrum of some (entrywise) nonnegative symmetric matrix. The Symmetric Nonnegative Inverse Eigenvalue Problem (SNIEP) is the problem of characterising all symmetrically realisable lists. 

In this paper, we present a recursive method for constructing symmetrically realisable lists. The properties of the realisable family we obtain allow us to make several novel connections between 
a number of sufficient conditions developed over forty years, starting with the work of Fiedler in 1974. We show that essentially all previously known sufficient conditions are either contained in or equivalent to the family we are introducing.  
 
\end{abstract}

\begin{keyword}
	Nonnegative matrices \sep Symmetric Nonnegative Inverse Eigenvalue Problem \sep Soules matrix
	\MSC[2010] 15A18 \sep 15A29
\end{keyword}

\end{frontmatter}

\section{Introduction}

This paper explores the spectral properties of symmetric nonnegative matrices. 
Nonnegative matrices were a topic of special interest of Hans Schneider: he had over fifty papers in the area, the most relevant of these to our present paper being \cite{MR3217406,MR1780191}.

Let $\sigma:=(\lambda_1,\lambda_2,\ldots,\lambda_n)$ be a list of $n$ real numbers. If there exists a nonnegative symmetric matrix $A$ with spectrum $\sigma$, then we say $\sigma$ is \emph{symmetrically realisable} and that $A$ realises $\sigma$. The \emph{Symmetric Nonnegative Inverse Eigenvalue Problem} (SNIEP) is the problem of characterising all symmetrically realisable lists.

Since the spectrum of a symmetric matrix is necessarily real, the restriction that $\sigma$ consist only of real numbers is a natural one; however, if we allow $A$ to be not-necessarily-symmetric, but consider only lists of real numbers, then the resulting problem is known as the \emph{Real Nonnegative Inverse Eigenvalue Problem} (RNIEP).

In this paper, we describe a recursive method of constructing symmetrically realisable lists, using a construction of \v{S}migoc \cite{SmigocDiagonalElement}. The properties of the realisable lists obtainable in this way allow us to show that essentially all known sufficient conditions to date are either contained in or equivalent to the realisability we are introducing. This includes the method of Soules \cite{Soules} (later generalised by Elsner, Nabben and Neumann \cite{ElsnerNabbenNeumann}), one of the most influential methods of constructing symmetrically realisable lists. Moreover, since we also show that the realising matrices we obtain by our method have the same form as the ones obtained by the method of Soules, our approach gives a new insight into Soules realisability.

%
%

 We also consider a sufficient condition for the RNIEP due to Borobia, Moro and Soto \cite{UnifiedView} called ``C-realisability'' and a family of sufficient conditions for the SNIEP due to Soto \cite{Soto2013}. We show that C-realisability is also sufficient for the SNIEP and that $\sigma$ is C-realisable if and only if it satisfies one of Soto's conditions. Such $\sigma$ are precisely those which may be obtained by our method or the method of Soules. The equivalence of all four methods is proved in Section \ref{sec:Equivalence}.

In Section \ref{sec:SNIEPPreliminaries}, we outline the background to and terminology used in this paper. In Section \ref{sec:Hn}, we describe our recursive approach and prove several properties of the realisable lists which may be obtained in this manner. Section \ref{sec:LiteratureComparison} can be seen as a survey of sufficient conditions for the SNIEP given in the literature, including Suleimanova \cite{Suleimanova1949}, Perfect \cite{Perfect1953}, Ciarlet \cite{Ciarlet1968}, Kellog \cite{Kellog1971}, Salzmann \cite{Salzmann1972}, Fiedler \cite{Fiedler}, Borobia \cite{Borobia1995} and Soto \cite{Soto2003}. We show that if $\sigma$ obeys any of these sufficient conditions, then $\sigma$ may also be obtained by our method.

\section{Preliminaries and notation}\label{sec:SNIEPPreliminaries}

To denote that $\sigma$ is symmetrically realisable, we may sometimes write $\sigma\in\mathcal{R}_n$ . In this paper, the diagonal elements of the realising matrix will also be important; hence, if there exists a nonnegative symmetric matrix $A$ with diagonal elements $(a_1,a_2,\ldots,a_n)$ and specrum $\sigma$, then we write
\[
	\sigma\in\mathcal{R}_n(a_1,a_2,\ldots,a_n).
\]
If we wish to specify that $\lambda_1$ is the Perron eigenvalue of the realising matrix, we will separate $\lambda_1$ from the remaining entries in the list by a semicolon, e.g. we may write
\[
	(\lambda_1;\lambda_2,\ldots,\lambda_n)\in\mathcal{R}_n
\]
or
\[
	(\lambda_1;\lambda_2,\ldots,\lambda_n)\in\mathcal{R}_n(a_1,a_2,\ldots,a_n).
\]
The remaining eigenvalues $\lambda_2,\lambda_3,\ldots,\lambda_n$ will generally be considered unordered. The diagonal elements $a_1,a_2,\ldots,a_n$ will also generally be cosidered unordered and they may appear in any order on the diagonal of $A$, i.e. we do not assume that $a_i$ is the $(i,i)$ entry of $A$. Sometimes we will assume that the $\lambda_i$ or $a_i$ are arranged in non-increasing order and if this is the case, we will say so explicitly. In this paper, $\mathcal{R}$ will always be replaced by either $\mathcal{S}$ or $\mathcal{H}$, depending on whether we are considering realisability via Soules or our recursive method.

We begin by stating some necessary conditions (due to Fiedler \cite{Fiedler}) for $\sigma$ to be the spectrum of a nonnegative symmetric matrix with specified diagonal elements:

\begin{thm}{\bf\cite{Fiedler}}\label{thm:FiedlerNecessary}
	If $\lambda_1\geq\lambda_2\geq\cdots\geq\lambda_n$, $a_1\geq a_2\geq\cdots\geq a_n\geq0$ and $(\lambda_1,\lambda_2,\ldots,\lambda_n)$ is the spectrum of a nonnegative symmetric matrix with diagonal elements $(a_1,a_2,\ldots,a_n)$, then
	\begin{gather*}
		\lambda_1\geq a_1,\\
		\sum_{i=1}^n\lambda_i=\sum_{i=1}^na_i
	\end{gather*}
	and
	\[
		\sum_{i=1}^s\lambda_i+\lambda_k\geq\sum_{i=1}^{s-1}a_i+a_{k-1}+a_k
	\]
	for all $1\leq s<k\leq n$ (with the convention that $\sum_{i=1}^0a_i=0$).
\end{thm}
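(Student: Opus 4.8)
The trace identity $\sum_{i=1}^n\lambda_i=\sum_{i=1}^na_i$ is immediate, and $\lambda_1\ge a_1$ follows from the variational characterisation $\lambda_1=\max_{\|x\|=1}x^{T}Ax\ge e_j^{T}Ae_j$ evaluated at the index $j$ of the largest diagonal entry (equivalently, from Perron--Frobenius: $\lambda_1=\rho(A)$ dominates every diagonal entry). So the plan concerns only the family of inequalities $\sum_{i=1}^s\lambda_i+\lambda_k\ge\sum_{i=1}^{s-1}a_i+a_{k-1}+a_k$.

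The first step is a reduction that eliminates the parameter $k$. Given $1\le s<k<n$, let $B$ be the $k\times k$ principal submatrix of $A$ lying in the rows and columns that carry the $k$ largest diagonal entries $a_1\ge\cdots\ge a_k$. Then $B$ is nonnegative and symmetric, its sorted diagonal is $a_1,\dots,a_k$, and Cauchy interlacing gives $\mu_j(B)\le\lambda_j(A)$ for $1\le j\le k$; in particular
\[
	\sum_{i=1}^s\lambda_i(A)+\lambda_k(A)\ \ge\ \sum_{i=1}^s\mu_i(B)+\mu_k(B).
\]
Thus it is enough to prove the inequality in the case where $k$ equals the order of the matrix; that is, for every nonnegative symmetric $A$ of order $n$ and every $1\le s\le n-1$,
\[
	\sum_{i=1}^s\lambda_i+\lambda_n\ \ge\ \sum_{i=1}^{s-1}a_i+a_{n-1}+a_n,
\]
which, by the trace identity, is equivalent to the bound $\sum_{i=s+1}^{n-1}\lambda_i\le\sum_{i=s}^{n-2}a_i$ on the ``interior'' eigenvalues.

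I would attack this remaining inequality by induction on $n$, combining Cauchy interlacing with the Schur--Horn majorisation theorem. Deleting the row and column of the \emph{largest} diagonal entry produces a nonnegative symmetric matrix $C$ of order $n-1$ with diagonal $a_2\ge\cdots\ge a_n$; interlacing transfers the $\lambda_i$ to the eigenvalues of $C$, the induction hypothesis is applied to $C$, and the leftover slack is estimated using the Perron--Frobenius structure of $A$ --- the nonnegative Perron eigenvector, the identity $\lambda_1=\rho(A)\ge|\lambda_n|$, and the bound $|A_{ij}|\le\lambda_1$. The low-order base cases are checked by hand; for $n=3$ the only inequality not already implied by the Schur--Horn majorisation is $\lambda_2\le a_1$, which one verifies by evaluating the characteristic polynomial at $a_1$: after the harmless normalisation $A\mapsto A-a_3I$ (which preserves nonnegativity and both sides of every inequality), the relevant $2\times2$ minors all carry the sign forced by $A_{ij}\ge0$, so $\det(a_1I-A)\le0$, and this --- together with $a_1\le\lambda_1$ and $\lambda_3\le a_3=0\le a_1$ --- pins $a_1$ into the interval $[\lambda_2,\lambda_1]$.

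The main obstacle is precisely this last estimate. Cauchy interlacing and Schur--Horn, used on their own, always leave a residual of the shape $(\text{smallest diagonal entry})-(\text{smallest eigenvalue})$ of a principal submatrix, and for a merely \emph{symmetric} matrix this quantity is genuinely positive --- indeed without nonnegativity the inequalities fail, because then the diagonal is constrained only by the Schur--Horn majorisation (e.g.\ $(1,1,-2)$ admits a symmetric realisation with zero diagonal, violating $\lambda_1+\lambda_3\ge a_2+a_3$). Controlling that residual forces an essential use of entrywise nonnegativity --- a sign analysis of principal minors, already visible in the $3\times3$ case above, together with the Perron bound $\lambda_n\ge-\lambda_1$. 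Should pushing this through in full generality prove too delicate, the fall-back is to invoke Fiedler's original argument in \cite{Fiedler}, which is the source of the statement.
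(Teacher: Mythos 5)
This is a result the paper quotes from \cite{Fiedler} without proof, so there is no internal argument to compare against; your proposal has to stand on its own, and as it stands it is incomplete. The parts you do carry out are correct: the trace identity, the bound $\lambda_1\geq a_1$, the interlacing reduction to the case $k=n$ (restricting to the principal submatrix carrying the $k$ largest diagonal entries and using $\mu_j(B)\leq\lambda_j(A)$ is valid and genuinely reduces the problem), the reformulation $\sum_{i=s+1}^{n-1}\lambda_i\leq\sum_{i=s}^{n-2}a_i$, and the $n=3$ verification via the sign of $\det(a_1I-A)=-a_{12}^2(a_1-a_{33})-a_{13}^2(a_1-a_{22})-2a_{12}a_{13}a_{23}\leq0$. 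You also correctly diagnose why Schur--Horn alone cannot finish: it yields $\sum_{i=s+1}^{n-1}\lambda_i\leq\sum_{i=s+1}^{n-1}a_i+(a_n-\lambda_n)$, and the residual $a_n-\lambda_n\geq0$ points the wrong way.

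The gap is that the inductive step you sketch does not close, and you do not supply the missing ingredient. Deleting the row and column of $a_1$ and applying the induction hypothesis with parameter $s-1$ to the resulting $C$ gives $\sum_{i=2}^{s-1}a_i+a_{n-1}+a_n\leq\sum_{i=1}^{s-1}\mu_i+\mu_{n-1}\leq\sum_{i=1}^{s-1}\lambda_i+\mu_{n-1}$, so to reach the target you would need $a_1+\mu_{n-1}\leq\lambda_s+\lambda_n$; but interlacing gives $\mu_{n-1}\geq\lambda_n$, so this demands $a_1\leq\lambda_s$, which is false in general (take $s=n-1$). The vague appeal to ``Perron--Frobenius structure, $\lambda_1\geq|\lambda_n|$, and $|A_{ij}|\leq\lambda_1$'' does not identify an estimate that absorbs this residual, and already for $n=4$, $s=1$ the required inequality $\lambda_2+\lambda_3\leq a_1+a_2$ needs a genuine argument exploiting nonnegativity (your own example $(1,1,-2)$ with zero diagonal shows it fails for merely symmetric matrices). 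Declaring that the fall-back is ``to invoke Fiedler's original argument'' is an admission that the central family of inequalities is not proved here: that argument \emph{is} the theorem. So the proposal is a correct reduction plus base cases, but the main claim for $n\geq4$ remains unestablished.
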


Fiedler also gave the following sufficient conditions:

\begin{thm}{\bf\cite{Fiedler}}\label{thm:FiedlerSufficient}
	Let $\lambda_1\geq\lambda_2\geq\cdots\geq\lambda_n$ and $a_1\geq a_2\geq\cdots\geq a_n\geq0$ satisfy the following conditions:
	\begin{gather}
		\sum_{i=1}^k\lambda_i\geq\sum_{i=1}^ka_i :\:\: k=1,2,\ldots,n-1,\notag \\
		\sum_{i=1}^n\lambda_i=\sum_{i=1}^na_i,\notag \\
		\lambda_k\leq a_{k-1} :\:\: k=2,3,\ldots,n-1.\label{eq:SoulesSufficient}
	\end{gather}
	Then  $(\lambda_1,\lambda_2,\ldots,\lambda_n)$ is the spectrum of a nonnegative symmetric matrix with diagonal elements $(a_1,a_2,\ldots,a_n)$.
\end{thm}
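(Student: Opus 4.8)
The plan is to realise $\sigma=(\lambda_1,\ldots,\lambda_n)$ by an iterated bordering construction: build the realising matrix up one size at a time, at each step increasing the Perron eigenvalue while introducing exactly one new eigenvalue and one new diagonal entry. I keep the non-increasing orderings as given.

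The engine is the following bordering lemma (cf.\ \v{S}migoc \cite{SmigocDiagonalElement}). Suppose $A$ is an $m\times m$ nonnegative symmetric matrix with spectral radius $r$, a nonnegative unit Perron eigenvector $\mathbf{u}$, and remaining eigenvalues $\nu_2,\ldots,\nu_m$. If $p\ge 0$ and $\pi\ge\max\{r,p\}$, set $t:=\sqrt{(\pi-r)(\pi-p)}$ (a nonnegative real, since $\pi\ge\max\{r,p\}$) and
\[
	B:=\begin{pmatrix} A & t\mathbf{u}\\ t\mathbf{u}^{T} & p\end{pmatrix}.
\]
Then $B$ is $(m+1)\times(m+1)$, nonnegative, symmetric, its diagonal consists of that of $A$ together with $p$, and its spectrum is $\{\pi,\nu_2,\ldots,\nu_m,\,r+p-\pi\}$: each $\nu_i$-eigenvector of $A$ may be taken orthogonal to $\mathbf{u}$ and then extends by a zero entry to a $\nu_i$-eigenvector of $B$, while on the orthogonal $2$-dimensional complement $B$ acts as $\bigl(\begin{smallmatrix} r & t\\ t & p\end{smallmatrix}\bigr)$, whose eigenvalues are $\pi$ and $r+p-\pi$. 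Since $2\pi\ge r+p$, $\pi$ is the largest eigenvalue of $B$, hence (as $B$ is nonnegative) its spectral radius, carrying a nonnegative Perron eigenvector; so the lemma may be applied to $B$ in turn.

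Next I would record the elementary consequence of the hypotheses that $\lambda_n\le a_n$: from $\sum_{i=1}^n\lambda_i=\sum_{i=1}^na_i$ and the partial-sum inequality at $k=n-1$ one has $\lambda_n=\sum_{i=1}^na_i-\sum_{i=1}^{n-1}\lambda_i\le a_n$. Then, starting from $M_0:=[a_n]$ (nonnegative, as $a_n\ge0$), apply the lemma for $j=1,\ldots,n-1$, taking at step $j$ the new diagonal entry $p_j:=a_{n-j}$ and the new Perron value
\[
	\pi_j:=\sum_{i=1}^{n-j}\lambda_i-\sum_{i=1}^{n-j-1}a_i,\qquad\text{with }\pi_0:=a_n.
\]
A one-line computation gives $\pi_j-\pi_{j-1}=a_{n-j}-\lambda_{n-j+1}$ and $\pi_j-p_j=\sum_{i=1}^{n-j}\lambda_i-\sum_{i=1}^{n-j}a_i$, so the hypothesis $\pi_j\ge\max\{\pi_{j-1},p_j\}$ of the lemma is exactly the conjunction of $\lambda_{n-j+1}\le a_{n-j}$ and the partial-sum inequality at index $n-j$; the former is \eqref{eq:SoulesSufficient} when $2\le j\le n-1$, and when $j=1$ it reads $\lambda_n\le a_{n-1}$, which holds because $\lambda_n\le a_n\le a_{n-1}$. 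The same computation identifies the eigenvalue introduced at step $j$ as $\pi_{j-1}+p_j-\pi_j=\lambda_{n-j+1}$. Thus over the $n-1$ steps the introduced eigenvalues are $\lambda_n,\lambda_{n-1},\ldots,\lambda_2$, the final Perron value is $\pi_{n-1}=\lambda_1$, and the diagonal of the resulting matrix is $\{a_n,a_{n-1},\ldots,a_1\}$, so $M_{n-1}$ realises $\sigma$ with the prescribed diagonal.

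The only genuinely non-mechanical point is hitting on the bookkeeping: feeding the eigenvalues in from smallest to largest and the diagonal entries likewise makes the running Perron values telescope into the prefix expressions $\sum_{i\le k}\lambda_i-\sum_{i<k}a_i$, and these are precisely the quantities the hypotheses control. The one place where the argument seems to call for an inequality not on the hypothesis list — the very first bordering, needing $\lambda_n\le a_{n-1}$ — is rescued by the preliminary observation $\lambda_n\le a_n$. Everything else is a direct substitution into the bordering lemma.
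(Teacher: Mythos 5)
Your proof is correct. Note, however, that the paper does not prove this statement at all: it is quoted verbatim from Fiedler \cite{Fiedler} as background, so there is no in-paper argument to compare against. What you have supplied is a self-contained proof whose engine --- bordering a nonnegative symmetric matrix by one row and column along a scalar multiple of its Perron eigenvector --- is exactly the $l=2$ special case of the paper's Lemma \ref{lem:SmigocSDLemma} (equivalently, the ``only if'' direction of the conditions (\ref{eq:n=2NS}) glued on at the diagonal entry $c=\pi_{j-1}$). Your bookkeeping is right: the verification that the $2\times 2$ block $\bigl(\begin{smallmatrix} r & t\\ t & p\end{smallmatrix}\bigr)$ has eigenvalues $\pi$ and $r+p-\pi$, the telescoping identities $\pi_j-p_j=\sum_{i\le n-j}(\lambda_i-a_i)\ge 0$ and $\pi_j-\pi_{j-1}=a_{n-j}-\lambda_{n-j+1}\ge 0$, and the observation that the one inequality not listed among the hypotheses, $\lambda_n\le a_{n-1}$, follows from the derived bound $\lambda_n\le a_n$ together with $a_n\le a_{n-1}$, all check out; the iteration is legitimate because $\pi_j$ is the largest eigenvalue of the nonnegative symmetric $M_j$ and hence its Perron root with a nonnegative eigenvector. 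In the paper's language your construction in fact proves slightly more than Fiedler's statement, namely that $(\lambda_1;\lambda_2,\ldots,\lambda_n)\in\mathcal{H}_n(a_1,a_2,\ldots,a_n)$ via the ``path-shaped'' decomposition in which each split peels off a single eigenvalue (compare Theorem \ref{thm:HSimplification} run in reverse with $(c;\lambda_n)\in\mathcal{H}_2(a_s,a_t)$ replaced by a split of the form $\mathcal{H}_2(a_{n-j},c)$); Fiedler's original argument instead glues two symmetric realizations of arbitrary sizes along their Perron eigenvectors, so your route is the more economical one and fits the paper's framework more directly.
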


For $n\leq3$, the question of whether $\sigma\in\mathcal{R}_n(a_1,a_2,\ldots,a_n)$ is completely solved by Theorems \ref{thm:FiedlerNecessary} and \ref{thm:FiedlerSufficient}. If $n=2$, the matrix
\[
	\left[
		\begin{array}{cc}
			a_1 & \sqrt{(\lambda_1-a_1)(\lambda_1-a_2)} \\
			\sqrt{(\lambda_1-a_1)(\lambda_1-a_2)} & a_2
		\end{array}
	\right]
\]
has spectrum $(\lambda_1,a_1+a_2-\lambda _1)$ and hence if $\lambda_1\geq\lambda_2$ and $a_1\geq a_2\geq0$, then $(\lambda_1,\lambda_2)$ is the spectrum of a nonnegative symmetric matrix with diagonal elements $(a_1,a_2)$ if and only if the following conditions are satisfied:
\begin{equation}\label{eq:n=2NS}
	\left\{\begin{array}{c}
		\lambda_1\geq a_1,\\
		\lambda_1+\lambda_2=a_1+a_2.
	\end{array}\right.
\end{equation}
If $n=3$, then the conditions of Theorems \ref{thm:FiedlerNecessary} and \ref{thm:FiedlerSufficient} are identical and hence if $\lambda_1\geq\lambda_2\geq\lambda_3$ and $a_1\geq a_2\geq a_3\geq0$, then $(\lambda_1,\lambda_2,\lambda_3)$ is the spectrum of a nonnegative symmetric matrix with diagonal elements $(a_1,a_2,a_3)$ if and only if the following conditions are satisfied:
\begin{equation}\label{eq:n=3NS}
	\left\{\begin{array}{c}
		\lambda_2\leq a_1\leq\lambda_1,\\
		\lambda_3\leq a_3,\\
		\lambda_1+\lambda_2+\lambda_3=a_1+a_2+a_3.
	\end{array}\right.
\end{equation}

\subsection{The Soules approach to the SNIEP}

Soules' approach to the SNIEP focuses on constructing the eigenvectors of the realising matrix $A$. Starting from a positive vector $x\in\mathbb{R}^n$, Soules \cite{Soules} showed how to construct a real orthogonal $n\times n$ matrix $R$ with first column $x$ such that for all $\lambda_1\geq\lambda_2\geq\cdots\geq\lambda_n\geq0$, the matrix $R\Lambda R^T$---where $\Lambda:=\mathrm{diag}(\lambda_1,\lambda_2,\ldots,\lambda_n)$---is nonnegative. This motivated Elsner, Nabben and Neumann \cite{ElsnerNabbenNeumann} to make the following definition:

\begin{defin}
	Let $R\in\mathbb{R}^{n\times n}$ be an orthogonal matrix with columns $r_1, r_2,\ldots,r_n$. $R$ is called a \emph{Soules matrix} if $r_1$ is positive and for every diagonal matrix $\Lambda:=\text{diag}(\lambda_1,\lambda_2,\ldots,\lambda_n)$ with $\lambda_1\geq\lambda_2\geq\cdots\geq\lambda_n\geq0$, the matrix $R\Lambda R^T$ is nonnegative.
\end{defin}

With regard to the SNIEP, a key property of Soules matrices is the following:

\begin{thm}{\bf \cite{ElsnerNabbenNeumann}}
	Let $R$ be a Soules matrix and let $\Lambda:=\mathrm{diag}(\lambda_1, \lambda_2, \ldots, ${ }$\lambda_n)$, where $\lambda_1\geq\lambda_2\geq\cdots\geq\lambda_n$. Then the off-diagonal entries of the matrix $R\Lambda R^T$ are nonnegative.
\end{thm}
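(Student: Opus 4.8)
The plan is to reduce the statement to the \emph{defining} property of a Soules matrix by means of a diagonal shift. The definition only guarantees entrywise nonnegativity of $R\Lambda R^T$ when $\lambda_n\geq0$, so the content here is to remove that hypothesis while still controlling the off-diagonal part. The key observation is that shifting $\Lambda$ by a scalar multiple of the identity changes only the diagonal of $R\Lambda R^T$.

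Concretely, first I would set $\mu_i:=\lambda_i-\lambda_n$ for $i=1,2,\ldots,n$. Since $\lambda_1\geq\lambda_2\geq\cdots\geq\lambda_n$, we have $\mu_1\geq\mu_2\geq\cdots\geq\mu_n=0\geq0$, so $M:=\mathrm{diag}(\mu_1,\mu_2,\ldots,\mu_n)$ is precisely the kind of diagonal matrix appearing in the definition of a Soules matrix. Hence $RMR^T$ is (entrywise) nonnegative; in particular all of its off-diagonal entries are nonnegative.

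Next I would relate $RMR^T$ back to $R\Lambda R^T$. Because $M=\Lambda-\lambda_n I$ and $R$ is orthogonal, $RMR^T=R\Lambda R^T-\lambda_n RR^T=R\Lambda R^T-\lambda_n I$. Thus $R\Lambda R^T$ and $RMR^T$ agree in every off-diagonal position, and since the off-diagonal entries of $RMR^T$ are nonnegative, so are those of $R\Lambda R^T$. (If $\lambda_n\geq0$ one could instead invoke the definition directly, but the shift argument covers all cases uniformly.)

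There is no real obstacle here: the only thing to notice is that $\lambda_n$ is exactly the shift needed to push the spectrum into the nonnegative regime where the Soules property applies, and that this shift is invisible off the diagonal. The proof is essentially this one-line reduction.
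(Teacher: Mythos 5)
Your argument is correct and is the standard one: since the paper states this result as a quotation from Elsner, Nabben and Neumann without reproducing a proof, there is nothing to diverge from, and your shift $\Lambda\mapsto\Lambda-\lambda_n I$ (which leaves the off-diagonal of $R\Lambda R^T$ unchanged because $RR^T=I$) is exactly the intended reduction to the defining nonnegativity property of a Soules matrix.
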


Therefore, if $R=(r_{ij})$ is an $n\times n$ Soules matrix and $\Lambda:=\text{diag}(\lambda_1,${ }$\lambda_2,\ldots,\lambda_n)$, where $\lambda_1\geq\lambda_2\geq\cdots\geq\lambda_n$, then $\sigma:=(\lambda_1,\lambda_2,\ldots,\lambda_n)$ is the spectrum of a nonnegative symmetric matrix if the diagonal elements of $R\Lambda R^T$ are nonnegative. This motivates the following definition:

\begin{defin}
	Let $\lambda_1\geq\lambda_2\geq\cdots\geq\lambda_n$ and let $a_1,a_2,\ldots,a_n\geq0$. We write
	\begin{equation}\label{eq:SDef}
		(\lambda_1;\lambda_2,\ldots,\lambda_n)\in\mathcal{S}_n(a_1,a_2,\ldots,a_n)
	\end{equation}
	if there exists an $n\times n$ Soules matrix $R$ such that the matrix $R\Lambda R^T$---where $\Lambda:=\mathrm{diag}(\lambda_1,\lambda_2,\ldots,\lambda_n)$---has diagonal elements $(a_1,a_2,\ldots,${ }$a_n)$. We write
	\[
		(\lambda_1,\lambda_2,\ldots,\lambda_n)\in\mathcal{S}_n
	\]
	if there exist $a_1,a_2,\ldots,a_n\geq0$ such that (\ref{eq:SDef}) holds and we call $\mathcal{S}_n$ the \emph{Soules set}.
\end{defin}

Elsner, Nabben and Neumann generalised the work of Soules by characterising \emph{all} Soules matrices. In order to state their characterisation, we require two definitions:

\begin{defin}
	Let $\mathcal{N}=(\mathcal{N}_1,\mathcal{N}_2,\ldots,\mathcal{N}_n)$ be a sequence of partitions of $\{1,2,\ldots,n\}$. We say that $\mathcal{N}$ is \emph{Soules-type} if $\mathcal{N}$ has the following properties:
	\begin{enumerate}[(i)]
		\item for each $i\in\{1,2,\ldots,n\}$, the partition $\mathcal{N}_i$ consists of precisely $i$ subsets, say $\mathcal{N}_i=\{\mathcal{N}_{i,1},\mathcal{N}_{i,2},\ldots,\mathcal{N}_{i,i}\}$;
		\item for each $i\in\{2,3,\ldots,n\}$, there exist indices $j,k,l$ with $1\leq j\leq i-1$ and $1\leq k<l\leq i$, such that $\mathcal{N}_{i-1}\setminus\mathcal{N}_{i-1,j}=\mathcal{N}_i\setminus\{\mathcal{N}_{i,k},\mathcal{N}_{i,l}\}$ and $\mathcal{N}_{i-1,j}=\mathcal{N}_{i,k}\cup\mathcal{N}_{i,l}$, i.e. $\mathcal{N}_i$ is constructed from $\mathcal{N}_{i-1}$ by splitting one of the sets $\mathcal{N}_{i-1,1},\mathcal{N}_{i-1,2},\ldots,\mathcal{N}_{i-1,i-1}$ into two subsets.
	\end{enumerate}
	If $\mathcal{N}=(\mathcal{N}_1,\mathcal{N}_2,\ldots,\mathcal{N}_n)$ is a Soules-type sequence of partitions of $\{1,2,\ldots,${ }$n\}$, then we label the sets $\mathcal{N}_{i,k}$ and $\mathcal{N}_{i,l}$ in (ii) as $\mathcal{N}_i^*$ and $\mathcal{N}_i^{**}$, i.e. for $i\in\{2,3,\ldots,n\}$, we define $\mathcal{N}_i^*$ and $\mathcal{N}_i^{**}$ to be those sets in $\mathcal{N}_i$ which do not coincide with any of the sets in $\mathcal{N}_{i-1}$.
\end{defin}

\begin{defin}
Let $x\in\mathbb{R}^n$ be a positive vector and let $\mathcal{N}=(\mathcal{N}_1,\mathcal{N}_2,${ }$\ldots,\mathcal{N}_n)$ be a Soules-type sequence of partitions of $\{1,2,\ldots,n\}$. For each $i\in\{2,3,\ldots,n\}$, we define $x_\mathcal{N}^{(i)}$ to be the vector in $\mathbb{R}^n$ whose $i^\text{th}$ component is:
\[
	\left\{
		\begin{array}{ll}
			x_i & :\; i\in\mathcal{N}_i^*\\
			0 & :\; i\notin\mathcal{N}_i^*
		\end{array}
	\right.
\]
and we define $\hat{x}_\mathcal{N}^{(i)}$ to be the vector in $\mathbb{R}^n$ whose $i^\text{th}$ component is:
\[
	\left\{
		\begin{array}{ll}
			x_i & :\; i\in\mathcal{N}_i^{**}\\
			0 & :\; i\notin\mathcal{N}_i^{**}.
		\end{array}
	\right.
\]
\end{defin}

We are now ready to state the characterisation of Soules matrices due to Elsner, Nabben and Neumann:

\begin{thm}{\bf\cite{ElsnerNabbenNeumann}}\label{thm:SoulesMatricesCharacterised}
	Let $x\in\mathbb{R}^n$ be a positive vector and let $R$ be a Soules matrix with columns $r_1,r_2,\ldots,r_n$, where $r_1=x$. Then there exists a Soules-type sequence $\mathcal{N}$ of partitions of $\{1,2,\ldots,n\}$ such that $r_i$ is given (up to a factor of $\pm1$) by
	\begin{equation}\label{eq:SoulesColumns}
	 r_i=\frac{1}{\sqrt{||x_\mathcal{N}^{(i)}||_2^2+||\hat{x}_\mathcal{N}^{(i)}||_2^2}}\left( \frac{||\hat{x}_\mathcal{N}^{(i)}||_2}{||x_\mathcal{N}^{(i)}||_2}x_\mathcal{N}^{(i)}-\frac{||x_\mathcal{N}^{(i)}||_2}{||\hat{x}_\mathcal{N}^{(i)}||_2}\hat{x}_\mathcal{N}^{(i)} \right),
	\end{equation}
	$i=2,3,\ldots,n$.
	
	Conversely, if $x\in\mathbb{R}^n$ is a positive vector with $||x||_2=1$ and $\mathcal{N}$ is a Soules-type sequence of partitions of $\{1,2,\ldots,n\}$, then the matrix $R=\left[\begin{array}{cccc}r_1&r_2&\cdots&r_n \end{array}\right]$---with $r_1=x$ and $r_2,r_3,\ldots,r_n$ given by (\ref{eq:SoulesColumns})---is a Soules matrix.
\end{thm}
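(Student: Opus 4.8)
The plan is to prove the two directions together, with the common engine being the Abel-summation identity
\[
R\Lambda R^T = \sum_{i=1}^n \lambda_i r_i r_i^T = \sum_{i=1}^{n-1}(\lambda_i-\lambda_{i+1})P_i + \lambda_n I,
\qquad P_i := \sum_{k=1}^i r_k r_k^T ,
\]
where, since $R$ is orthogonal, $P_i$ is the orthogonal projection onto $\mathrm{span}(r_1,\dots,r_i)$. Every ``step'' list $\lambda_1=\cdots=\lambda_i=1$, $\lambda_{i+1}=\cdots=\lambda_n=0$ is admissible in the definition of a Soules matrix, and for it $R\Lambda R^T=P_i$; hence $R$ being Soules forces each $P_i$ to be entrywise nonnegative. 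Conversely, if each $P_i$ is entrywise nonnegative, the displayed identity writes $R\Lambda R^T$ as a nonnegative combination of entrywise-nonnegative matrices for every admissible $\Lambda$, so $R$ is Soules. Thus everything reduces to classifying the chains $\mathrm{span}(x)=\mathrm{range}\,P_1\subseteq\mathrm{range}\,P_2\subseteq\cdots\subseteq\mathrm{range}\,P_n=\mathbb{R}^n$ of entrywise-nonnegative orthogonal projections with $\mathrm{rank}\,P_i=i$.

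The structural core is the description of a single entrywise-nonnegative symmetric idempotent $P$. Form the graph on $\{1,\dots,n\}$ with an edge $ij$ whenever $P_{ij}>0$; the relation $P^2=P$ propagates strict positivity along paths, so $P$ is block diagonal with respect to the connected components and each block has strictly positive entries. A strictly positive symmetric idempotent is irreducible, so by Perron--Frobenius its spectral radius $1$ is a simple eigenvalue; as $0$ and $1$ are the only eigenvalues, each block has rank one and equals $\frac{vv^T}{v^Tv}$ for its positive Perron vector $v$. Applying this to $P_i$: from $x\in\mathrm{range}\,P_1\subseteq\mathrm{range}\,P_i$ we get $P_i x=x$, which both forces every row of $P_i$ to be nonzero (so its support is all of $\{1,\dots,n\}$) and forces the Perron vector on a block $S$ to be a positive multiple of $x|_S$ (the restriction of $x$ to $S$). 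Hence
\[
P_i=\sum_{S\in\mathcal{N}_i}\frac{(x|_S)(x|_S)^T}{\|x|_S\|_2^2}
\]
for a partition $\mathcal{N}_i$ of $\{1,\dots,n\}$ into $\mathrm{rank}\,P_i=i$ parts.

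It remains to read off the combinatorics. Since the vectors $x|_S$, $S\in\mathcal{N}_i$, are disjointly supported and $x>0$, a coordinatewise comparison shows that $\mathrm{range}\,P_{i-1}\subseteq\mathrm{range}\,P_i$ holds if and only if $\mathcal{N}_i$ refines $\mathcal{N}_{i-1}$; as $\mathcal{N}_i$ has exactly one more part, precisely one part of $\mathcal{N}_{i-1}$ splits into two, which is exactly condition (ii), so $\mathcal{N}=(\mathcal{N}_1,\dots,\mathcal{N}_n)$ is Soules-type. Writing $\mathcal{N}_i^*,\mathcal{N}_i^{**}$ for the two new parts (so $x_\mathcal{N}^{(i)}=x|_{\mathcal{N}_i^*}$ and $\hat{x}_\mathcal{N}^{(i)}=x|_{\mathcal{N}_i^{**}}$), the column $r_i$ lies in $\mathrm{range}\,P_i$, is orthogonal to $\mathrm{range}\,P_{i-1}$, and has unit norm. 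Expanding $r_i$ in the basis $\{x|_S:S\in\mathcal{N}_i\}$, orthogonality to each $x|_S$ with $S\in\mathcal{N}_{i-1}$ kills every coefficient except those of $x|_{\mathcal{N}_i^*}$ and $x|_{\mathcal{N}_i^{**}}$ and pins down their ratio, while normalisation fixes their magnitudes; solving gives exactly (\ref{eq:SoulesColumns}) up to an overall sign. This proves the forward direction. For the converse one runs the same computations in reverse: starting from a positive unit vector $x$ and a Soules-type $\mathcal{N}$, define $P_i$ by the displayed formula (these are orthogonal projections, entrywise nonnegative because $x>0$, and nested because each $\mathcal{N}_i$ refines $\mathcal{N}_{i-1}$ by condition (ii)), verify that the $r_i$ of (\ref{eq:SoulesColumns}) are orthonormal with $\mathrm{span}(r_1,\dots,r_i)=\mathrm{range}\,P_i$, and invoke the Abel identity to conclude $R\Lambda R^T\geq 0$.

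The step I expect to be the real obstacle is the single-projection classification: establishing cleanly that the positive blocks are irreducible with simple Perron root and hence rank one, and ruling out isolated zero rows via $P_i x=x$; paired with the bookkeeping that converts ``$\mathrm{range}\,P_{i-1}\subseteq\mathrm{range}\,P_i$ with both partition projections'' into ``$\mathcal{N}_i$ refines $\mathcal{N}_{i-1}$''. Once those are in place, recovering the columns and checking the converse are routine linear algebra.
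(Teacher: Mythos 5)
The paper offers no proof of this theorem: it is quoted verbatim from Elsner, Nabben and Neumann, so there is no in-paper argument to compare against. Your proof is correct and is essentially the canonical one from that source: reduce Soules-ness to entrywise nonnegativity of the nested projections $P_i=\sum_{k\le i}r_kr_k^T$ via Abel summation on step spectra, classify a nonnegative symmetric idempotent fixing the positive vector $x$ as a direct sum of rank-one projections $\frac{(x|_S)(x|_S)^T}{\|x|_S\|_2^2}$ (with the trace/rank count giving exactly $i$ blocks), and read off the refinement chain and the columns. All the delicate points you flagged --- positivity propagation through $P^2=P$, exclusion of zero rows via $P_ix=x$, and the equivalence of range-nesting with partition refinement using disjoint supports --- are handled correctly, so nothing is missing.
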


\begin{rem}
	Note that, by (\ref{eq:SoulesColumns}), the $j^\mathrm{th}$ entry of $r_i$ is nonzero if and only if $j\in\mathcal{N}_i^*\cup\mathcal{N}_i^{**}$.
\end{rem}

\begin{ex}\label{ex:SoulesExample}
	Let us show that $(7;5,-2,-4,-6)\in\mathcal{S}_5(0,0,0,0,0)$. To see this, conside the vector
	\[
		x=\left[\begin{array}{ccccc}\frac{1}{2}&\frac{1}{2}&\frac{1}{2 \sqrt{2}}&\frac{\sqrt{3}}{4}&\frac{\sqrt{3}}{4}\end{array}\right]^T
	\]
	and the partition sequence $\mathcal{N}=(\mathcal{N}_1,\mathcal{N}_2,\mathcal{N}_3,\mathcal{N}_4,\mathcal{N}_5)$ (illustrated in Figure \ref{fig:PartitionSequence}), where
	\begin{align*}
		\mathcal{N}_1 &=\{\{1,2,3,4,5\}\},\\
		\mathcal{N}_2 &=\{\{1,2\},\{3,4,5\}\},\\
		\mathcal{N}_3 &=\{\{1,2\},\{3\},\{4,5\}\},\\
		\mathcal{N}_4 &=\{\{1,2\},\{3\},\{4\},\{5\}\},\\
		\mathcal{N}_5 &=\{\{1\},\{2\},\{3\},\{4\},\{5\}\}.
	\end{align*}
	\begin{figure}[hbt]
		\centering
		\begin{tikzpicture}
[nodeDecorate/.style={shape=rectangle,inner sep=1pt,opacity=1}]
\node (a) at (3,4) [nodeDecorate] {$\{1,2,3,4,5\}$};

\node (b) at (2.25,3) [nodeDecorate] {$\{1,2\}$};
\node (c) at (3.75,3) [nodeDecorate] {$\{3,4,5\}$};

\node (d) at (1.5,2) [nodeDecorate] {$\{1,2\}$};
\node (e) at (3,2) [nodeDecorate] {$\{3\}$};
\node (f) at (4.5,2) [nodeDecorate] {$\{4,5\}$};

\node (g) at (0.75,1) [nodeDecorate] {$\{1,2\}$};
\node (h) at (2.25,1) [nodeDecorate] {$\{3\}$};
\node (i) at (3.75,1) [nodeDecorate] {$\{4\}$};
\node (j) at (5.25,1) [nodeDecorate] {$\{5\}$};

\node (k) at (0,0) [style=nodeDecorate] {$\{1\}$};
\node (l) at (1.5,0) [style=nodeDecorate] {$\{2\}$};
\node (m) at (3,0) [nodeDecorate] {$\{3\}$};
\node (n) at (4.5,0) [nodeDecorate] {$\{4\}$};
\node (o) at (6,0) [nodeDecorate] {$\{5\}$};
\tikzstyle{EdgeStyle}=[->,>=stealth,thick,opacity=1]
\tikzstyle{LabelStyle}=[fill=white]
\foreach \startnode/\endnode/\bend in {
  a/b/bend left=0,
  a/c/bend left=0,
  b/d/bend left=0,
  c/e/bend left=0,
  c/f/bend left=0,
  d/g/bend left=0,
  e/h/bend left=0,
  f/i/bend left=0,
  f/j/bend left=0,
  g/k/bend left=0,
  g/l/bend left=0,
  h/m/bend left=0,
  i/n/bend left=0,
  j/o/bend left=0
}
{
  \Edge[style=\bend](\startnode)(\endnode)
}
		\end{tikzpicture}
		\caption{Partition sequence $\mathcal{N}$}
		\label{fig:PartitionSequence}
	\end{figure}
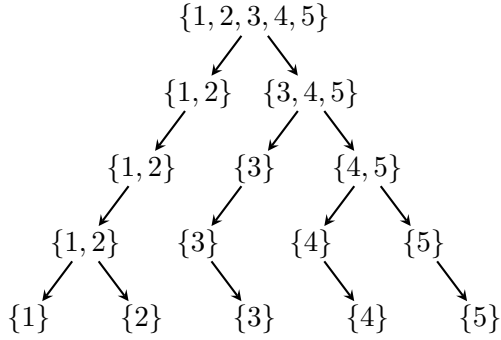
	Using (\ref{eq:SoulesColumns}), we construct the Soules matrix
	\[
		R=
\left[
\begin{array}{ccccc}
 \frac{1}{2} & \frac{1}{2} & 0 & 0 & \frac{1}{\sqrt{2}} \\
 \frac{1}{2} & \frac{1}{2} & 0 & 0 & -\frac{1}{\sqrt{2}} \\
 \frac{1}{2 \sqrt{2}} & -\frac{1}{2 \sqrt{2}} & \frac{\sqrt{3}}{2} & 0 & 0 \\
 \frac{\sqrt{3}}{4} & -\frac{\sqrt{3}}{4} & -\frac{1}{2 \sqrt{2}} & \frac{1}{\sqrt{2}} & 0 \\
 \frac{\sqrt{3}}{4} & -\frac{\sqrt{3}}{4} & -\frac{1}{2 \sqrt{2}} & -\frac{1}{\sqrt{2}} & 0
\end{array}
\right]
	\]
	and the realising matrix
	\begin{equation}\label{eq:RLA0}
		A=R\Lambda R^T=
\left[
\begin{array}{ccccc}
 0 & 6 & \frac{1}{2 \sqrt{2}} & \frac{\sqrt{3}}{4} & \frac{\sqrt{3}}{4} \\
 6 & 0 & \frac{1}{2 \sqrt{2}} & \frac{\sqrt{3}}{4} & \frac{\sqrt{3}}{4} \\
 \frac{1}{2 \sqrt{2}} & \frac{1}{2 \sqrt{2}} & 0 & \sqrt{6} & \sqrt{6} \\
 \frac{\sqrt{3}}{4} & \frac{\sqrt{3}}{4} & \sqrt{6} & 0 & 4 \\
 \frac{\sqrt{3}}{4} & \frac{\sqrt{3}}{4} & \sqrt{6} & 4 & 0
\end{array}
\right],
	\end{equation}
	where $\Lambda:=\mathrm{diag}(7,5,-2,-4,-6)$.
\end{ex}

Note that if $\sigma$ is irreducible, then any realising matrix for $\sigma$ has a positive Perron eigenvector; however, the condition that Soules matrices have positive first column means that certain reducible lists (which are trivially symmetrically realisable) are not contained in $\mathcal{S}_n$; for example, $(1,1)$ is symmetrically realisable, but $(1,1)\not\in\mathcal{S}_2$. In order to complete the equivalence we prove in Section \ref{sec:Equivalence}, we would like to include these reducible spectra in the Soules set. Hence we make the following definition:

\begin{defin}
	Let $\lambda_1\geq\lambda_2\geq\cdots\geq\lambda_n$ and let $a_1,a_2,\ldots,a_n\geq0$. We write
	\begin{equation}\label{eq:SBarDef}
		(\lambda_1;\lambda_2,\ldots,\lambda_n)\in\overline{\mathcal{S}}_n(a_1,a_2,\ldots,a_n)
	\end{equation}
	if there exist two partitions
	\begin{gather*}
		\{1,\ldots,n\}=\{\alpha_1^{(1)},\ldots,\alpha_{n_1}^{(1)}\}\cup\{\alpha_1^{(2)},\ldots,\alpha_{n_2}^{(2)}\}\cup\cdots\cup\{\alpha_1^{(k)},\ldots,\alpha_{n_k}^{(k)}\}, \\
		\{1,\ldots,n\}=\{\beta_1^{(1)},\ldots,\beta_{n_1}^{(1)}\}\cup\{\beta_1^{(2)},\ldots,\beta_{n_2}^{(2)}\}\cup\cdots\cup\{\beta_1^{(k)},\ldots,\beta_{n_k}^{(k)}\},
	\end{gather*}
	such that
	\[
		\left(\lambda_{\alpha_1^{(i)}};\lambda_{\alpha_2^{(i)}},\ldots,\lambda_{\alpha_{n_i}^{(i)}}\right)\in\mathcal{S}_{n_i}\left(a_{\beta_1^{(i)}},a_{\beta_2^{(i)}},\ldots,a_{\beta_{n_i}^{(i)}}\right) :\hspace{3mm} i=1,2,\ldots,k.
	\]
	We write
	\[
		(\lambda_1,\lambda_2,\ldots,\lambda_n)\in\overline{\mathcal{S}}_n
	\]
	if there exist $a_1,a_2,\ldots,a_n\geq0$ such that (\ref{eq:SBarDef}) holds.
\end{defin}

The Soules set and its role in the SNIEP has been extensively studied, for example by McDonald and Neumann \cite{McDonaldNeumann} and Loewy and McDonald \cite{LoewyMcDonald}. Soules matrices and the associated orthonormal bases have also been considered elsewhere in the literature, for example in \cite{CHN2006, CNS2007, Nabben2007, CNS2008, EubanksMcDonald2009}. In addition, Soules matrices have been applied to other areas of linear algebra, including nonnegative matrix factorisation \cite{CHNP2004}, the cp-rank problem \cite{Naomi2004} and describing the relationships between various classes of matrices \cite{ElsnerNabbenNeumann, Nabben2007}.

\subsection{A constructive lemma}

In \cite[Lemma 5]{SmigocDiagonalElement}, given a nonnegative matrix $B$ with Perron eigenvalue $c$ and specrtum $(c,\nu_2,\nu_3,\ldots,\nu_l)$ and a nonnegative matrix $A$ with spectrum $(\mu_1,\mu_2,\ldots,\mu_k)$ and a diagonal element $c$, \v{S}migoc shows how to construct a nonnegative matrix $C$ with spectrum $(\mu_1,${ }$\mu_2,\ldots,\mu_k,\nu_2,\nu_3,\ldots,\nu_l)$. For applications of this construction, see \cite{SmigocDiagonalElement,SmigocSubmatrixConstruction,NewListsFromOld}. Furthermore, if $A$ and $B$ are symmetric, then $C$ will be symmetric also. We state the symmetric case below.

\begin{lem}{\bf \cite{SmigocDiagonalElement}}\label{lem:SmigocSDLemma}
	Let $B$ be an $l\times l$ nonnegative symmetric matrix with Perron eigenvalue $c$ and spectrum $(c,\nu_2,\nu_3,\ldots,\nu_l)$ and let $Y\in\mathbb{R}^{l\times l}$ be an orthogonal matrix such that
	\[
		Y^TBY=\mathrm{diag}(c,\nu_2,\nu_3,\ldots,\nu_l).
	\]
	Let $Y$ be partitioned as
	\[
		Y=\left[
			\begin{array}{cc}
				v & V
			\end{array}
		\right]
	\]
	where $v\in\mathbb{R}^l$ and $V\in\mathbb{R}^{l\times(l-1)}$.
	
	Let
	\[
		A:=\left[
			\begin{array}{cc}
				A_1 & a \\
				a^T & c
			\end{array}
		\right],
	\]
	where $A_1$ is an $(k-1)\times(k-1)$ nonnegative symmetric matrix and $a\in\mathbb{R}^{k-1}$ is nonnegative and let $X\in\mathbb{R}^{k\times k}$ be an orthogonal matrix such that
	\[
		X^TAX=\mathrm{diag}(\mu_1,\mu_2,\ldots,\mu_k).
	\]
	Let $X$ be partitioned as
	\[
		X=\left[
			\begin{array}{c}
				U \\ u^T
			\end{array}
		\right],
	\]
	where $u\in\mathbb{R}^k$ and $U\in\mathbb{R}^{(k-1)\times k}$.	
	
	Then for matrices
	\[
		C:=\left[
			\begin{array}{cc}
				A_1 & av^T \\
				va^T & B
			\end{array}
		\right]	
	\]
	and
	\[
		Z:=\left[
			\begin{array}{cc}
				U & 0 \\
				vu^T & V
			\end{array}
		\right],
	\]
	we have
	\[
		Z^TCZ=\mathrm{diag}(\mu_1,\mu_2,\ldots,\mu_k,\nu_2,\nu_3,\ldots,\nu_l).
	\]
\end{lem}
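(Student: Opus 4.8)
The plan is to verify the claimed identity $Z^TCZ=\mathrm{diag}(\mu_1,\ldots,\mu_k,\nu_2,\ldots,\nu_l)$ by direct block multiplication, exploiting the orthogonality relations coming from $X$ and $Y$. First I would record the identities that follow from $X$ being orthogonal with $X^TAX$ diagonal: writing $X=\left[\begin{smallmatrix}U\\u^T\end{smallmatrix}\right]$, orthogonality of $X$ gives $U^TU+uu^T=I_k$ and $U U^T + (\text{stuff})$, but more usefully I would use $X^TAX=\mathrm{diag}(\mu_1,\ldots,\mu_k)=:M$ together with $XX^T=I$ to write $A=XMX^T$, so that in block form
\[
	A_1=UMU^T,\qquad a=UMu,\qquad c=u^TMu.
\]
Similarly, from $Y$ orthogonal with $Y^TBY=\mathrm{diag}(c,\nu_2,\ldots,\nu_l)=:D$ and $Y=\left[\begin{smallmatrix}v&V\end{smallmatrix}\right]$, I get $B=YDY^T = c\,vv^T + V\,\mathrm{diag}(\nu_2,\ldots,\nu_l)\,V^T$, and the orthonormality relations $v^Tv=1$, $v^TV=0$, $V^TV=I_{l-1}$, $vv^T+VV^T=I_l$. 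Also $Bv=cv$ since $c$ is the Perron eigenvalue with eigenvector $v$ (this is exactly the first column of $BY=YD$).

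Next I would compute $CZ$ and then $Z^TCZ$ blockwise. With $C=\left[\begin{smallmatrix}A_1&av^T\\va^T&B\end{smallmatrix}\right]$ and $Z=\left[\begin{smallmatrix}U&0\\vu^T&V\end{smallmatrix}\right]$, the product $CZ$ has blocks
\[
	CZ=\left[
	\begin{array}{cc}
		A_1U+av^Tvu^T & \; 0 \\
		va^TU+Bvu^T & \; BV
	\end{array}
	\right]
	=\left[
	\begin{array}{cc}
		A_1U+au^T & 0 \\
		va^TU+c\,vu^T & BV
	\end{array}
	\right],
\]
using $v^Tv=1$ and $Bv=cv$. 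Then $Z^TCZ=\left[\begin{smallmatrix}U^T & uv^T\\0 & V^T\end{smallmatrix}\right]\left[\begin{smallmatrix}A_1U+au^T & 0\\ va^TU+c\,vu^T & BV\end{smallmatrix}\right]$, which has four blocks. The $(2,2)$ block is $V^TBV=V^T(YDY^T)V$; since $V$ consists of the last $l-1$ columns of $Y$, this is exactly $\mathrm{diag}(\nu_2,\ldots,\nu_l)$. The $(2,1)$ block is $V^T(va^TU+c\,vu^T)=(V^Tv)(a^TU+c\,u^T)=0$ since $V^Tv=0$. The $(1,2)$ block is $uv^TBV=u(v^TB)V=u(cv^T)V=cu(v^TV)=0$. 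The crux is the $(1,1)$ block:
\[
	U^T(A_1U+au^T)+uv^T(va^TU+c\,vu^T)=U^TA_1U+U^Tau^T+u a^TU+c\,uu^T,
\]
again using $v^Tv=1$. Substituting $A_1=UMU^T$, $a=UMu$, $c=u^TMu$ and the identity $U^TU=I_k - ?$—here I must be careful: orthogonality of $X$ gives $X^TX=I$, i.e. $U^TU+uu^T=I_k$, so $U^TU=I_k-uu^T$. Then the $(1,1)$ block becomes $U^TUMU^TU + U^TUMuu^T + uu^TMU^TU + u(u^TMu)u^T$, and substituting $U^TU=I_k-uu^T$ and expanding, the terms should collapse to $M=\mathrm{diag}(\mu_1,\ldots,\mu_k)$; indeed this is just the block expansion of $X^TAX=M$ written as $(I_k-uu^T)$-type pieces, or more transparently one can observe $Z^TCZ$ restricted appropriately equals $X^T A X$ in the relevant corner.

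The main obstacle I anticipate is keeping the bookkeeping of the $(1,1)$ block straight: one wants to recognize that $U^TA_1U+U^Tau^T+ua^TU+c\,uu^T$ is precisely the top-left $k\times k$ block that arises when one computes $X^T A X$ after noting $A=\left[\begin{smallmatrix}A_1&a\\a^T&c\end{smallmatrix}\right]$ and $X=\left[\begin{smallmatrix}U\\u^T\end{smallmatrix}\right]$. In fact $X^TAX=\left[\begin{smallmatrix}U^T&u\end{smallmatrix}\right]\left[\begin{smallmatrix}A_1&a\\a^T&c\end{smallmatrix}\right]\left[\begin{smallmatrix}U\\u^T\end{smallmatrix}\right]=U^TA_1U+U^Tau^T+ua^TU+c\,uu^T$, which is exactly our $(1,1)$ block and equals $\mathrm{diag}(\mu_1,\ldots,\mu_k)$ by hypothesis. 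So the cleanest route is: expand $Z^TCZ$ into four blocks, reduce the off-diagonal blocks to zero using $V^Tv=0$ and $Bv=cv$, identify the $(2,2)$ block via $V^TBV$, and identify the $(1,1)$ block as literally $X^TAX$. No delicate estimates are needed; the only real care is in the block algebra and in invoking $Bv=cv$ (equivalently, the first column of the relation $Y^TBY=D$) at the two places it is needed.
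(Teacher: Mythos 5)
Your verification is correct and complete: the block computation of $CZ$ and then $Z^TCZ$ is right, the off-diagonal blocks vanish via $v^TV=0$ and $Bv=cv$ (with $v^TB=cv^T$ by symmetry of $B$), the $(2,2)$ block is $V^TBV=\mathrm{diag}(\nu_2,\ldots,\nu_l)$, and the key observation that the $(1,1)$ block is literally $U^TA_1U+U^Tau^T+ua^TU+c\,uu^T=X^TAX=\mathrm{diag}(\mu_1,\ldots,\mu_k)$ closes the argument cleanly (the detour through $A_1=UMU^T$, $a=UMu$, $c=u^TMu$ is unnecessary, as you note). The paper does not reprove this lemma --- it is imported by citation from \cite{SmigocDiagonalElement} --- and your direct block-multiplication argument is the standard proof of that result, so there is nothing to flag.
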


\subsection{C-realisability and the RNIEP}

In \cite{UnifiedView}, Borobia, Moro and Soto construct realisable lists in the RNIEP, starting from trivially realisable lists, using three well-known results.

Specifically, in 1997, Guo gave the following result, which states that we may perturb a real eigenvalue of a realisable list by $\pm\epsilon$, provided we also increase the Perron eigenvalue by $\epsilon$:

\begin{thm}{\bf\cite{Guo}}\label{thm:Guo}
	If $(\rho,\lambda_2,\lambda_3,\ldots,\lambda_n)$ is realisable, where $\rho$ is the Perron
eigenvalue and $\lambda_2$ is real, then
	\[
		(\rho+\epsilon,\lambda_2\pm \epsilon,\lambda_3,\lambda_4,\ldots,\lambda_n)
	\]
	is realisable for all $\epsilon\geq0$.
\end{thm}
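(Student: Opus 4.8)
The plan is to build the required matrix from two pieces: a small nonnegative matrix realising the pair $\{\rho+\epsilon,\lambda_2\pm\epsilon\}$, and a nonnegative matrix carrying the remaining eigenvalues, glued together by Lemma~\ref{lem:SmigocSDLemma} (in the non-symmetric setting, by its analogue \cite[Lemma~5]{SmigocDiagonalElement}). First dispose of $\rho=0$: then $A$ has spectral radius $0$, so $\sigma=(0,\ldots,0)$, and the targets $(\epsilon,\pm\epsilon,0,\ldots,0)$ are realised by $\left[\begin{smallmatrix}0&\epsilon\\\epsilon&0\end{smallmatrix}\right]\oplus 0_{n-2}$ and $\epsilon I_2\oplus 0_{n-2}$; so assume $\rho>0$. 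The key elementary fact is that, since $(\rho+\epsilon)+(\lambda_2\pm\epsilon)\ge 0$ and $\rho+\epsilon\ge|\lambda_2\pm\epsilon|$, the pair $\{\rho+\epsilon,\lambda_2\pm\epsilon\}$ is the spectrum of a $2\times2$ nonnegative matrix $\left[\begin{smallmatrix}c&p\\q&c'\end{smallmatrix}\right]$ for every value $c$ of one diagonal entry ranging over an explicit interval: one sets $c'=(\rho+\epsilon)+(\lambda_2\pm\epsilon)-c$ and $pq=cc'-(\rho+\epsilon)(\lambda_2\pm\epsilon)$, and the requirements $c,c'\ge0$ and $pq\ge0$ cut out exactly this interval, whose endpoints are among $0$, $\lambda_2\pm\epsilon$, $\rho+\lambda_2$ and $\rho+\epsilon$.

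I would then induct on $n$, the case $n=2$ being immediate from~(\ref{eq:n=2NS}). For $n\ge3$, split $\{\lambda_3,\ldots,\lambda_n\}$ into two parts $S$ and $T$, aiming to realise $(\rho+\epsilon,\lambda_2\pm\epsilon)\cup S$ by a nonnegative matrix $A'$ having a diagonal element $c$, to realise $(c)\cup T$ by a nonnegative matrix $B$ with Perron eigenvalue $c$, and then to apply Lemma~\ref{lem:SmigocSDLemma} to $A'$ and $B$; the output is a nonnegative matrix with spectrum $(\rho+\epsilon,\lambda_2\pm\epsilon,\lambda_3,\ldots,\lambda_n)$. When $S=\emptyset$ the matrix $A'$ is one of the $2\times2$ blocks above. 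In general $(\rho+\epsilon,\lambda_2\pm\epsilon)\cup S$ is a Guo-perturbation of the shorter list $(\rho,\lambda_2)\cup S$ (in which $\rho$ is still a Perron value and $\lambda_2$ still real), which the inductive hypothesis supplies once that list is known to be realisable. I would also use freely that the Perron value of a realisable list can be raised at will: if a nonnegative matrix realises a list with Perron eigenvalue $\rho'$ and left/right Perron eigenvectors $u,v\ge0$ normalised so that $u^{T}v=1$, then adding the nonnegative rank-one term $s\,vu^{T}$ realises, by Brauer's theorem, the same list with $\rho'$ replaced by $\rho'+s$.

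The step I expect to be the main obstacle---and the technical heart of the argument---is choosing the subset $S$ so that all three requirements hold simultaneously: $(\rho,\lambda_2)\cup S$ realisable; this list realisable, even after the inductive perturbation, by a matrix having a diagonal entry $c$ inside the admissible interval of the first paragraph; and $(c)\cup T$ realisable with $c$ as Perron value. Phrased instead through a direct rank-two perturbation $A\mapsto A+PMQ^{T}$ of the original realiser---with the columns of $P$ right eigenvectors for $\rho$ and $\lambda_2$, those of $Q$ the matching left eigenvectors normalised by $Q^{T}P=I_2$, and $M$ chosen so that the induced $2\times2$ block acquires spectrum $\{\rho+\epsilon,\lambda_2\pm\epsilon\}$ while the other eigenvalues stay fixed---the difficulty is that the eigenvector of the non-Perron real eigenvalue $\lambda_2$ is sign-indefinite, so $PMQ^{T}$ has negative entries and entrywise nonnegativity of the perturbed matrix has to be forced by a careful choice of realiser (passing, if necessary, to a favourable block form via the support of the Perron eigenvector and re-assembling with Lemma~\ref{lem:SmigocSDLemma}). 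Granting this balancing, the eigenvalue bookkeeping above closes the proof for both signs and all $\epsilon\ge0$.
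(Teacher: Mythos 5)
The paper does not prove this statement at all---it is quoted verbatim from Guo \cite{Guo} as a known result---so the only meaningful comparison is with the paper's internal analogue, Theorem \ref{thm:GuoForSoules}, which establishes the same perturbation property \emph{only} for lists in $\mathcal{H}_n$. Your proposal is essentially that proof transplanted to arbitrary realisable lists, and that is where it breaks down.

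The fatal step is the very first move of your induction: splitting $\{\lambda_3,\ldots,\lambda_n\}$ into $S$ and $T$ so that $(\rho,\lambda_2)\cup S$ is realisable with a diagonal entry $c$ and $(c)\cup T$ is realisable with Perron value $c$. A general realisable list has no reason to admit such a \v{S}migoc-type decomposition; the existence of that decomposition is precisely the \emph{definition} of membership in $\mathcal{H}_n$, and the entire point of Section \ref{sec:Hn} is that $\mathcal{H}_n$ (equivalently $\overline{\mathcal{S}}_n$, C-realisability, the $\mathbb{S}_p$ criteria) is a restricted sufficient condition, not a characterisation of realisability. So your induction never gets off the ground for the lists the theorem is actually about. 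Your fallback formulation via the rank-two update $A\mapsto A+PMQ^{T}$ is the honest restatement of the real difficulty---the sign-indefiniteness of the $\lambda_2$-eigenvector---but you then write ``Granting this balancing, \ldots{} closes the proof,'' which concedes rather than closes the gap. Guo's actual argument is of a different character entirely (a continuity/intermediate-value analysis of how the roots of the characteristic polynomial of a perturbed nonnegative matrix move), and nothing in your outline substitutes for it. As written, the proposal proves at best the $\mathcal{H}_n$ case already covered by Theorem \ref{thm:GuoForSoules}, not Theorem \ref{thm:Guo}.
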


Note also the following well-known result, also proved by Guo \cite{Guo}:

\begin{thm}{\bf \cite{Guo}}\label{thm:PerronIncrease}
	If $(\rho,\lambda_2,\lambda_3,\ldots,\lambda_n)$ is the spectrum of a nonnegative matrix with Perron eigenvalue $\rho$, then for all $\epsilon\geq0$, $(\rho+\epsilon,\lambda_2,\lambda_3,\ldots,${ }$\lambda_n)$ is the spectrum of a nonnegative matrix also.
\end{thm}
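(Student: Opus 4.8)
The plan is to realise $(\rho+\epsilon,\lambda_2,\lambda_3,\ldots,\lambda_n)$ by a single nonnegative rank-one perturbation of a matrix realising the original list. Let $A\geq0$ have spectrum $(\rho,\lambda_2,\ldots,\lambda_n)$ with Perron eigenvalue $\rho$. By the Perron--Frobenius theorem, $A$ has a nonnegative right eigenvector $v$ associated with $\rho$; since $v\neq0$, we may fix an index $j$ with $v_j>0$. Set
\[
	B:=A+\frac{\epsilon}{v_j}\,v e_j^T,
\]
where $e_j$ denotes the $j$-th standard basis vector. Since $A\geq0$, $v\geq0$ and $\epsilon\geq0$, the matrix $B$ is entrywise nonnegative, so it suffices to check that the spectrum of $B$ is $(\rho+\epsilon,\lambda_2,\ldots,\lambda_n)$.

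I would verify the spectrum via the standard observation (Brauer's theorem) that for any row vector $q^T$, the matrix $A+vq^T$ has the same eigenvalues as $A$ except that the eigenvalue $\rho$ belonging to $v$ is replaced by $\rho+q^Tv$. Here $q^T=(\epsilon/v_j)e_j^T$, so $q^Tv=\epsilon$ and the perturbed Perron eigenvalue is exactly $\rho+\epsilon$. If one prefers a self-contained argument, extend $v$ to a basis and let $P$ be the invertible matrix with first column $v$; then $P^{-1}AP=\left[\begin{smallmatrix}\rho & \ast\\ 0 & A'\end{smallmatrix}\right]$ with $A'$ an $(n-1)\times(n-1)$ matrix whose eigenvalues are $\lambda_2,\ldots,\lambda_n$, while $P^{-1}(ve_j^T)P=e_1(e_j^TP)$ alters only the first row of $P^{-1}AP$, changing its $(1,1)$ entry by $(\epsilon/v_j)(e_j^Tv)=\epsilon$ and leaving the block $A'$ intact; hence $P^{-1}BP$ is block upper triangular with diagonal data $(\rho+\epsilon,\lambda_2,\ldots,\lambda_n)$.

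There is no real obstacle here: the only point to watch is that the perturbation stay entrywise nonnegative, which is immediate from the signs of $v$ and $\epsilon$. It is worth noting that we need neither irreducibility nor diagonalisability of $A$ --- Perron--Frobenius still yields a (possibly non-positive) nonnegative eigenvector for the spectral radius $\rho$, and such a vector necessarily has at least one positive coordinate, so the index $j$ exists. The case $\epsilon=0$ is trivial, and for $\epsilon>0$ the construction produces the required nonnegative matrix explicitly, with no limiting argument needed.
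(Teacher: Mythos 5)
Your proof is correct and complete. The paper itself gives no proof of this statement---it is quoted from Guo's paper as a known result---so there is nothing to compare against line by line; but the argument you give (a nonnegative rank-one perturbation $A+(\epsilon/v_j)\,ve_j^T$ along a nonnegative Perron eigenvector, with the spectrum tracked either by Brauer's theorem or by the explicit similarity to a block upper triangular form) is the standard route and is carried out correctly. You also handle the two points that are easy to gloss over: nonnegativity of the perturbation, and the existence of an index $j$ with $v_j>0$ without assuming irreducibility. Choosing $q^T$ proportional to $e_j^T$ rather than to a left Perron vector $w^T$ is a good move, since it avoids any worry about $w^Tv$ vanishing in the reducible, non-diagonalisable case.
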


Finally, recall that the spectrum of a block diagonal matrix is the union of the spectra of the diagonal blocks, in other words:

\begin{obv}\label{obv:Union}
	If $(\lambda_1,\lambda_2,\ldots,\lambda_m)$ and $(\mu_1,\mu_2,\ldots,\mu_n)$ are realisable, then $(\lambda_1,\lambda_2,\ldots,\lambda_m,\mu_1,\mu_2,\ldots,\mu_n)$ is realisable.
\end{obv}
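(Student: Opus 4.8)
The plan is to observe that realisability (in the RNIEP sense) is closed under direct sums, so the statement is immediate once the realising matrices are combined into a single block diagonal matrix. By hypothesis there is an $m\times m$ entrywise nonnegative matrix $A$ with spectrum $(\lambda_1,\lambda_2,\ldots,\lambda_m)$ and an $n\times n$ entrywise nonnegative matrix $B$ with spectrum $(\mu_1,\mu_2,\ldots,\mu_n)$. I would then set
\[
	C:=\left[\begin{array}{cc} A & 0 \\ 0 & B \end{array}\right].
\]

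The first step is to note that $C$ is entrywise nonnegative, since $A$ and $B$ are and the off-diagonal blocks are zero. The second step is to identify its spectrum: as $C$ is block diagonal (in particular block triangular), $\det(xI_{m+n}-C)=\det(xI_m-A)\,\det(xI_n-B)$, so the eigenvalues of $C$, counted with multiplicity, form the multiset union of those of $A$ and those of $B$, namely $(\lambda_1,\lambda_2,\ldots,\lambda_m,\mu_1,\mu_2,\ldots,\mu_n)$. Hence this list is realisable, which is what we wanted.

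There is no real obstacle here; the only point worth flagging is that ``realisable'' refers to the RNIEP, so neither the given realising matrices nor $C$ need be symmetric, and the argument uses nothing beyond the facts that the class of entrywise nonnegative matrices is closed under direct sums and that the characteristic polynomial of a block triangular matrix is the product of those of its diagonal blocks. (The identical direct-sum construction also yields the analogous closure statement for symmetric realisability when $A$ and $B$ happen to be symmetric.)
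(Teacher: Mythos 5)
Your proof is correct and is exactly the argument the paper intends: the observation is stated as a recollection that the spectrum of a block diagonal matrix is the union of the spectra of its diagonal blocks, which is precisely your direct-sum construction. Nothing further is needed.
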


Borobia, Moro and Soto make the following definition:

\begin{defin}
	A list of real numbers $(\lambda_1,\lambda_2,\ldots,\lambda_n)$ is called \emph{C-realisable} if it may be obtained by starting with the $n$ trivially realisable lists $(0),(0),${ }$\ldots,(0)$ and then using results \ref{thm:Guo}, \ref{thm:PerronIncrease} and \ref{obv:Union} any number of times in any order.
\end{defin}

\begin{ex}\label{ex:C-realisabilityExample}
	In Example \ref{ex:SoulesExample}, we showed that $(7,5,-2,-4,-6)\in\mathcal{S}_5$. To see that $(7,5,-2,-4,-6)$ is C-realisable, consider the following series of steps:
	\begin{enumerate}
		\item $(0),(0),(0),(0),(0)$
		\item $(0,0),(0,0),(0)$
		\item $(6,-6),(4,-4),(0)$
		\item $(6,-6),(4,0,-4)$
		\item $(6,-6),(6,-2,-4)$
		\item $(6,6,-2,-4,-6)$
		\item $(7,5,-2,-4,-6)$
	\end{enumerate}
	We used Ovservation \ref{obv:Union} at steps $1\rightarrow2$, $3\rightarrow4$ and $5\rightarrow6$. We used Theorem \ref{thm:Guo} at steps $2\rightarrow3$, $4\rightarrow5$ and $6\rightarrow7$.
\end{ex}

Of course, if $\sigma$ is C-realisable, then $\sigma$ is realisable. Note that while the symmetric analogues of Theorem \ref{thm:PerronIncrease} and Observation \ref{obv:Union} hold, it is an open question whether the symmetric version of Theorem \ref{thm:Guo} is true. We prove in Section \ref{sec:Equivalence} that if $\sigma$ is C-realisable, then $\sigma$ is symmetrically realisable.

\subsection{A family of realisability criteria in the SNIEP}\label{sec:Sp}

Based on a theorem of Brauer, Soto \cite{Soto2013} gives a family of realisability criteria denoted $\mathbb{S}_1,\mathbb{S}_2,\ldots$ (not to be confused with $\mathcal{S}_n$), such that if a list of real numbers $\sigma:=(\lambda_1,\lambda_2,\ldots,\lambda_n)$ satisfies the criterion $\mathbb{S}_p$ for some $p=1,2,\ldots$, then $\sigma$ is realisable. Soto also shows in \cite{Soto2013} that the $\mathbb{S}_p$ criteria are sufficient for symmetric realisability. In order to state $\mathbb{S}_p$, we will require some terminology and notation from \cite{Soto2013}: Let $\sigma:=(\lambda_1,\lambda_2,\ldots,\lambda_n)$, where $\lambda_1\geq\lambda_2\geq\cdots\geq\lambda_n$, and let $K$ be a realisability criterion. Then we write
\[
	\sigma\in\mathcal{Q}_K
\]
if $\sigma$ satisfies the criterion $K$. The \emph{Brauer} $K$-\emph{negativity} of $\sigma$ is defined to be the nonnegative number
\begin{equation}\label{eq:SotoNDef}
	\mathcal{N}_K(\sigma):=\min\{\epsilon\geq0:(\lambda_1+\epsilon,\lambda_2,\lambda_3,\ldots,\lambda_n)\in\mathcal{Q}_K\},
\end{equation}
and if $\sigma\in\mathcal{Q}_K$, then the \emph{Brauer} $K$-\emph{realisability margin} of $\sigma$ is defined to be
\begin{equation}\label{eq:SotoMDef}
	\mathcal{M}_K(\sigma):=\max\{\epsilon\in[0,\lambda_1-\lambda_2]:(\lambda_1-\epsilon,\lambda_2,\lambda_3,\ldots,\lambda_n)\in\mathcal{Q}_K\}.
\end{equation}

The $\mathbb{S}_p$ criteria are now defined recursively: We say $\sigma$ satisfies the $\mathbb{S}_1$ criterion if
\[
	\lambda_1\geq-\lambda_n-\sum_{T_i<0}T_i,
\]
where
\[
	T_i:=\lambda_i+\lambda_{n-i+1}: \hspace{3mm} i=2,3,\ldots,\left\lfloor\frac{n}{2}\right\rfloor
\]
and for odd $n\geq3$, $T_{\frac{n+1}{2}}:=\min\left\{\lambda_{\frac{n+1}{2}},0\right\}$.

For $p=2,3,\ldots$, we say that $\sigma$ satisfies the $\mathbb{S}_p$ criterion if there exists a partition of $\sigma$ into sublists $\sigma_1,\sigma_2,\ldots,\sigma_r$, where
\begin{equation}\label{eq:SotoPartition}
	\left\{\hspace{3mm}
		\begin{array}{ccl}
			\sigma_i=\left(\lambda^{(i)}_1,\lambda^{(i)}_2,\ldots,\lambda^{(i)}_{n_i}\right) & : & i=1,2,\ldots,r, \\
			\lambda^{(1)}_1=\lambda_1, & & \\
			\lambda^{(i)}_1\geq0 & : & i=1,2,\ldots,r, \\
			\lambda^{(i)}_1\geq\lambda^{(i)}_2\geq\cdots\geq\lambda^{(i)}_{n_i} & : & i=1,2,\ldots,r,
		\end{array}
	\right.
\end{equation}
such that $\sigma_1\in\mathcal{Q}_{\mathbb{S}_{p-1}}$ and
\begin{equation}\label{eq:Soto1}
	\lambda_1\geq\gamma+\sum_{\sigma_i\not\in\mathcal{Q}_{\mathbb{S}_{p-1}}}\mathcal{N}_{\mathbb{S}_{p-1}}(\sigma_i),
\end{equation}
where
\begin{equation}\label{eq:Soto2}
	\gamma:=\max\{\lambda_1-\mathcal{M}_{\mathbb{S}_{p-1}}(\sigma_1),\lambda^{(2)}_1,\lambda^{(3)}_1,\ldots,\lambda^{(r)}_1\}.
\end{equation}

Note that if we allow $r=1$ above, then we have:

\begin{obv}\label{obv:p_implies_p+1}
	If $\sigma$ satisfies $\mathbb{S}_p$, then $\sigma$ satisfies $\mathbb{S}_{p+1}$.
\end{obv}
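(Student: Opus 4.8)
The plan is to verify Observation \ref{obv:p_implies_p+1} directly from the recursive definition of the $\mathbb{S}_p$ criteria, by exhibiting the trivial partition $r=1$. Suppose $\sigma=(\lambda_1,\lambda_2,\ldots,\lambda_n)$ with $\lambda_1\geq\lambda_2\geq\cdots\geq\lambda_n$ satisfies $\mathbb{S}_p$. I want to show $\sigma$ satisfies $\mathbb{S}_{p+1}$, i.e. that there is a partition of $\sigma$ into sublists $\sigma_1,\ldots,\sigma_r$ obeying (\ref{eq:SotoPartition}) with $\sigma_1\in\mathcal{Q}_{\mathbb{S}_p}$ and (\ref{eq:Soto1}) holding, where $\gamma$ is given by (\ref{eq:Soto2}).

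The approach is to take $r=1$ and $\sigma_1=\sigma$. First I would check that this choice satisfies (\ref{eq:SotoPartition}): indeed $\sigma_1=\sigma=(\lambda_1,\lambda_2,\ldots,\lambda_n)$ so $n_1=n$, $\lambda^{(1)}_1=\lambda_1$ as required, $\lambda^{(1)}_1=\lambda_1\geq0$ (this holds because $\lambda_1$ is necessarily nonnegative for any list satisfying a realisability criterion; it also follows from the $\mathbb{S}_1$ inequality at the base of the recursion and is preserved upward), and the entries are in non-increasing order by hypothesis. Next, $\sigma_1=\sigma\in\mathcal{Q}_{\mathbb{S}_p}$ by assumption. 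It remains to verify (\ref{eq:Soto1}), which with $r=1$ and no sublists outside $\mathcal{Q}_{\mathbb{S}_p}$ reduces to the single inequality $\lambda_1\geq\gamma$, where by (\ref{eq:Soto2}) we have $\gamma=\lambda_1-\mathcal{M}_{\mathbb{S}_p}(\sigma_1)=\lambda_1-\mathcal{M}_{\mathbb{S}_p}(\sigma)$ (the max is over the single term, since there are no $\lambda^{(i)}_1$ with $i\geq2$). Since $\sigma\in\mathcal{Q}_{\mathbb{S}_p}$, the realisability margin $\mathcal{M}_{\mathbb{S}_p}(\sigma)$ is defined and, being a maximum over a set containing $0$ (as $\epsilon=0$ gives $(\lambda_1-0,\lambda_2,\ldots,\lambda_n)=\sigma\in\mathcal{Q}_{\mathbb{S}_p}$), satisfies $\mathcal{M}_{\mathbb{S}_p}(\sigma)\geq0$. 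Hence $\gamma=\lambda_1-\mathcal{M}_{\mathbb{S}_p}(\sigma)\leq\lambda_1$, so (\ref{eq:Soto1}) holds, and therefore $\sigma$ satisfies $\mathbb{S}_{p+1}$.

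There is essentially no obstacle here: the statement is immediate once one observes that the recursive definition permits the degenerate partition with one block, and that the sum in (\ref{eq:Soto1}) is empty in that case because $\sigma_1\in\mathcal{Q}_{\mathbb{S}_p}$. The only point requiring a moment's care is confirming $\mathcal{M}_{\mathbb{S}_p}(\sigma)\geq0$, which is built into its definition as a maximum of a nonempty subset of $[0,\lambda_1-\lambda_2]$ that always contains $0$. One should also note the edge case $\lambda_1=\lambda_2$, where the interval $[0,\lambda_1-\lambda_2]$ collapses to $\{0\}$ and $\mathcal{M}_{\mathbb{S}_p}(\sigma)=0$, giving $\gamma=\lambda_1$ and (\ref{eq:Soto1}) holding with equality; this causes no difficulty.
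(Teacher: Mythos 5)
Your proof is correct and is exactly the paper's argument: the paper justifies this observation with the single remark that allowing $r=1$ in the definition of $\mathbb{S}_{p+1}$ (so $\sigma_1=\sigma$, the sum in (\ref{eq:Soto1}) is empty, and $\gamma=\lambda_1-\mathcal{M}_{\mathbb{S}_p}(\sigma)\leq\lambda_1$) gives the result. Your write-up simply makes these routine checks explicit.
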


\begin{thm}{\bf\cite{Soto2013}}
	If $\sigma$ satisfies $\mathbb{S}_p$ for any $p$, then $\sigma$ is symmetrically realisable.
\end{thm}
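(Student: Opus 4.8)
The plan is to prove the statement by induction on $p$. \emph{Base case ($p=1$):} a list satisfying $\mathbb{S}_1$ is symmetrically realisable. This is (essentially) the sufficient condition of Soto~\cite{Soto2003}; it can also be seen directly, by pairing $\lambda_i$ with $\lambda_{n-i+1}$, realising each pair with $T_i\ge0$ by a $2\times2$ nonnegative symmetric matrix (cf.~\eqref{eq:n=2NS}), realising a single ``core'' block that absorbs $\lambda_1$, $\lambda_n$ and the remaining entries via Fiedler's sufficient condition (Theorem~\ref{thm:FiedlerSufficient}) --- the $\mathbb{S}_1$ inequality being precisely the room needed on $\lambda_1$ --- and then taking the block-diagonal sum.

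\emph{Inductive step.} Suppose $p\ge2$ and every list satisfying $\mathbb{S}_{p-1}$ is symmetrically realisable, and let $\sigma$ satisfy $\mathbb{S}_p$. Fix a partition $\sigma_1,\dots,\sigma_r$ as in~\eqref{eq:SotoPartition} with $\sigma_1\in\mathcal{Q}_{\mathbb{S}_{p-1}}$; if $r=1$ we are done by the hypothesis (Observation~\ref{obv:p_implies_p+1}), so assume $r\ge2$. Put $\mathcal{N}_i:=\mathcal{N}_{\mathbb{S}_{p-1}}(\sigma_i)$, $N:=\sum_{i=2}^{r}\mathcal{N}_i$, $\gamma_1:=\lambda_1-\mathcal{M}_{\mathbb{S}_{p-1}}(\sigma_1)$, $\rho_i:=\lambda_1^{(i)}+\mathcal{N}_i$ for $i\ge2$, and $c:=\lambda_1-N$. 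Inequalities~\eqref{eq:Soto1}--\eqref{eq:Soto2} yield at once
\[
	\lambda_1\ \ge\ c\ \ge\ \gamma\ \ge\ \max\{\gamma_1,\lambda_1^{(2)},\dots,\lambda_1^{(r)}\}\ \ge\ 0
	\qquad\text{and}\qquad
	0\ \le\ \rho_i\ \le\ \lambda_1 .
\]
By the induction hypothesis together with the symmetric analogue of Theorem~\ref{thm:PerronIncrease}, the list $(c;\lambda_2^{(1)},\dots,\lambda_{n_1}^{(1)})$ is realised by a nonnegative symmetric matrix $B_1$ with Perron eigenvalue $c$ (start from a realisation of $(\gamma_1;\lambda_2^{(1)},\dots,\lambda_{n_1}^{(1)})\in\mathcal{Q}_{\mathbb{S}_{p-1}}$ and raise its Perron eigenvalue to $c\ge\gamma_1$), and for each $i\ge2$ the list $(\rho_i;\lambda_2^{(i)},\dots,\lambda_{n_i}^{(i)})$ lies in $\mathcal{Q}_{\mathbb{S}_{p-1}}$ by~\eqref{eq:SotoNDef} and hence is realised by a nonnegative symmetric matrix $B_i$ with Perron eigenvalue $\rho_i$.

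Next I would build an $r\times r$ ``seed'' matrix $M$ realising $(\lambda_1;\lambda_1^{(2)},\dots,\lambda_1^{(r)})$ with diagonal elements $(c,\rho_2,\dots,\rho_r)$: the choice $c=\lambda_1-N$ forces $c+\sum_{i\ge2}\rho_i=\lambda_1+\sum_{i\ge2}\lambda_1^{(i)}$, so the traces agree, and combined with $c\ge\max_i\lambda_1^{(i)}$, $\rho_i\ge\lambda_1^{(i)}$ and $c\le\lambda_1$, a short sorting argument shows the remaining hypotheses of Theorem~\ref{thm:FiedlerSufficient} (the interlacing $\lambda_k\le a_{k-1}$, the partial sums $\sum_1^k\lambda_i\ge\sum_1^k a_i$, and $\lambda_1\ge a_1$) all hold, so $M$ exists. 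Finally I would apply Lemma~\ref{lem:SmigocSDLemma} $r$ times: first glue $B_1$ onto the diagonal entry $c$ of $M$ (after a symmetric permutation placing that entry last), which is legitimate since $c$ is the Perron eigenvalue of $B_1$ and $M$ is nonnegative; then glue $B_i$ onto the diagonal entry $\rho_i$ of the current matrix for $i=2,\dots,r$ in turn, the entries $\rho_{i+1},\dots,\rho_r$ surviving each step. Nonnegativity and symmetry are preserved throughout, and the final matrix has spectrum $\{\lambda_1,\lambda_1^{(2)},\dots,\lambda_1^{(r)}\}\cup\bigcup_{i=1}^{r}\{\lambda_2^{(i)},\dots,\lambda_{n_i}^{(i)}\}=\bigcup_{i=1}^{r}\sigma_i=\sigma$, with Perron eigenvalue $\lambda_1$. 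Hence $\sigma$ is symmetrically realisable and the induction closes.

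The step I expect to be the main obstacle is engineering the seed matrix $M$: one must choose the diagonal value $c$ so that the trace balances while simultaneously keeping $c\ge\gamma_1$ (so that $B_1$ can be produced from the induction hypothesis by symmetric Perron-increase) \emph{and} $c\ge\max_i\lambda_1^{(i)}$ (so that Fiedler's conditions apply to $M$), and both of these hinge on the two inequalities built into the definition of $\gamma$ in~\eqref{eq:Soto1}--\eqref{eq:Soto2}. By comparison, the base case is a known standalone criterion, and the degenerate sublists (some $n_i=1$, where $\mathcal{N}_i=0$ and the corresponding $B_i$ is $1\times1$) are handled trivially.
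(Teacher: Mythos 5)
Your route is genuinely different from the paper's. The paper does not prove this theorem at all: it is imported from Soto \cite{Soto2013}, whose own argument rests on Brauer-type rank-one perturbations; the paper's independent justification is the chain (iv) $\Rightarrow$ (iii) $\Rightarrow$ (ii) of Theorem~\ref{thm:SoulesEquivalence} ($\mathbb{S}_p$ implies C-realisability implies membership in $\mathcal{H}_n$), with the first implication itself deferred to \cite{UnifiedView}. You instead give a direct induction on $p$, building a seed matrix by Theorem~\ref{thm:FiedlerSufficient} and attaching the blocks by Lemma~\ref{lem:SmigocSDLemma}. Your inductive step is sound, and the ``short sorting argument'' you defer does close: with $D=(c,\rho_2,\dots,\rho_r)$ and $E=(\lambda_1,\lambda_1^{(2)},\dots,\lambda_1^{(r)})$ sorted as $\mu_1\ge\cdots\ge\mu_r$, one has $\rho_i\le\gamma+N\le\lambda_1$, and for any $k$-subset $S$ of indices $\sum_{i\in S}D_i\le\sum_{i=1}^k\mu_i$ (split on whether $1\in S$, using $\sum_{i\in S\setminus\{1\}}\mathcal{N}_i\le N\le\lambda_1-\gamma$ and $\gamma\ge\lambda_1^{(j)}$), which gives the partial-sum conditions; the interlacing $\mu_k\le a_{k-1}$ holds because $\rho_i\ge\lambda_1^{(i)}$ termwise, so the order statistics of $D$ dominate those of $(\lambda_1^{(2)},\dots,\lambda_1^{(r)})$. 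What this buys is a self-contained argument whose realising matrix has exactly the recursive gluing structure of $\mathcal{H}_n$; in effect you re-derive the implication (iv) $\Rightarrow$ (ii) of Theorem~\ref{thm:SoulesEquivalence} without passing through C-realisability.

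The one place you should not lean on your ``seen directly'' argument is the base case. Realising the core $\{\lambda_1,\lambda_n\}\cup\{\lambda_i,\lambda_{n-i+1}:T_i<0\}$ via Theorem~\ref{thm:FiedlerSufficient} fails in general: the core's trace is $\lambda_1+\lambda_n+\sum_{T_i<0}T_i$, which can be $0$, forcing an all-zero diagonal, while the core may contain positive non-Perron entries. For $\sigma=(10,1,-2,-9)$ the core is all of $\sigma$, the diagonal must be $(0,0,0,0)$, and Fiedler's condition $\lambda_2\le a_1$ reads $1\le0$. The list is nevertheless symmetrically realisable (glue $(9;-9)$ and $(2;-2)$ onto the diagonal of a $2\times2$ matrix with spectrum $(10,1)$ and diagonal $(9,2)$), so the base case stands --- but only as the cited criterion of Soto, already known to be SNIEP-sufficient (the paper records this in Section~\ref{sec:LiteratureComparison}); rest it on that citation rather than on the Fiedler sketch.
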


\begin{ex}\label{ex:SpExample}
	In Example \ref{ex:SoulesExample}, we showed that $\sigma:=(7,5,-2,-4,${ }$-6)\in\mathcal{S}_5$ and in Example \ref{ex:C-realisabilityExample}, we showed that $\sigma$ is C-realisable. It is easy to check that $\sigma$ does not satisfy $\mathbb{S}_1$; however, consider the partition $\sigma=(\sigma_1,\sigma_2)$, where $\sigma_1=(7,-6)$ and $\sigma_2=(5,-2,-4)$. Then $\mathcal{M}_{\mathbb{S}_1}(\sigma_1)=\mathcal{N}_{\mathbb{S}_1}(\sigma_2)=1$ and hence $\sigma$ satisfies $\mathbb{S}_2$.
\end{ex}

\section{A recursive approach to the SNIEP}\label{sec:Hn}

Here, we describe a method of recursively constructing symmetrically realisable lists, starting with lists of length 2 and repeatedly applying Lemma \ref{lem:SmigocSDLemma}. Formally, we define the set $\mathcal{H}_n$ in the following way:

\begin{defin}\label{def:HDefinition}
	For $a\geq0$, we write $(\lambda)\in\mathcal{H}_1(a)$ if $\lambda=a$. For $a_1,a_2\geq0$, we write $(\lambda_1;\lambda_2)\in\mathcal{H}_2(a_1,a_2)$ if $\lambda_1\geq\max\{a_1,a_2\}$ and $\lambda_1+\lambda_2=a_1+a_2$. For $a_1,a_2,\ldots,a_m\geq0$, we write
	\begin{equation}\label{eq:HDef}
		(\lambda_1;\lambda_2,\ldots,\lambda_m)\in\mathcal{H}_m(a_1,a_2,\ldots,a_m)
	\end{equation}
	if there exist two partitions
	\begin{gather*}
		\{2,3,\ldots,m\}=\{\alpha_1,\alpha_2,\ldots,\alpha_{k}\}\cup\{\beta_1,\beta_2,\ldots,\beta_{m-k-1}\} \\
		\{1,2,\ldots,m\}=\{\gamma_1,\gamma_2,\ldots,\gamma_k\}\cup\{\delta_1,\delta_2,\ldots,\delta_{m-k}\}
	\end{gather*}
	and a nonnegative number $c$ such that
	\[
		(\lambda_1;\lambda_{\alpha_1},\lambda_{\alpha_2},\ldots,\lambda_{\alpha_k})\in\mathcal{H}_{k+1}(a_{\gamma_1},a_{\gamma_2},\ldots,a_{\gamma_k},c)
	\]
	and
	\[
		(c;\lambda_{\beta_1},\lambda_{\beta_2},\ldots,\lambda_{\beta_{m-k-1}})\in\mathcal{H}_{m-k}(a_{\delta_1},a_{\delta_2},\ldots,a_{\delta_{m-k}}).
	\]
	We write
	\[
		(\lambda_1,\lambda_2,\ldots,\lambda_n)\in\mathcal{H}_n
	\]
	if there exist $a_1,a_2,\dots,a_n\geq0$ such that (\ref{eq:HDef}) holds.
\end{defin}

Note that by Lemma \ref{lem:SmigocSDLemma}, if $(\lambda_1;\lambda_2,\ldots,\lambda_n)\in\mathcal{H}_n(a_1,a_2,\ldots,a_n)$, then $(\lambda_1,\lambda_2,\ldots,\lambda_n)$ is the spectrum of a nonnegative symmetric matrix with diagonal elements $(a_1,a_2,\ldots,a_n)$.

\begin{ex}\label{ex:HExample}
	In Example \ref{ex:SoulesExample}, we showed that $\sigma:=(7,5,-2,-4,${ }$-6)\in\mathcal{S}_5$, in Example \ref{ex:C-realisabilityExample}, we showed that $\sigma$ is C-realisable and in Example \ref{ex:SpExample}, we showed that $\sigma$ satisfies $\mathbb{S}_2$. Let us now show that $\sigma\in\mathcal{H}_5$. To do this, we need only show how to progressively decompose $\sigma$ according to Definition \ref{def:HDefinition}. One such decomposition is given in Figure \ref{fig:SigmaDecomp}.
	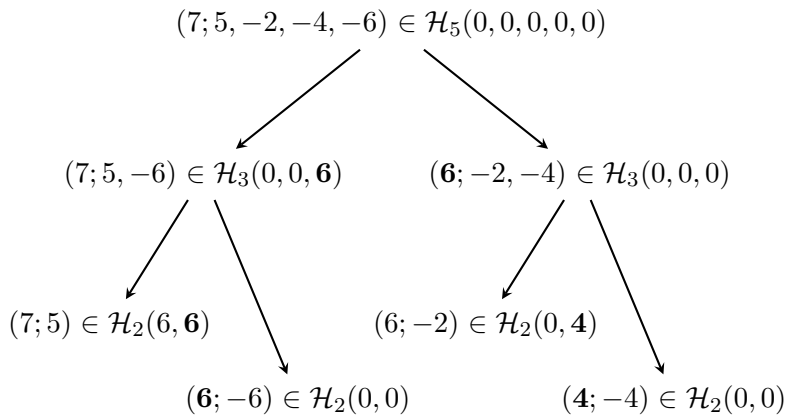
\begin{figure}[hbt]
		\centering
		\begin{tikzpicture}
[nodeDecorate/.style={shape=rectangle,inner sep=4pt,opacity=1}]
\node (a) at (3.75,4) [nodeDecorate] {$(7;5,-2,-4,-6)\in\mathcal{H}_5(0,0,0,0,0)$};

\node (b) at (1.25,2) [nodeDecorate] {$(7;5,-6)\in\mathcal{H}_3(0,0,\mathbf{6})$};	
\node (c) at (6.25,2) [nodeDecorate] {$(\mathbf{6};-2,-4)\in\mathcal{H}_3(0,0,0)$};

\node (d) at (0,0) [nodeDecorate] {$(7;5)\in\mathcal{H}_2(6,\mathbf{6})$};	
\node (e) at (2.5,-1) [nodeDecorate] {$(\mathbf{6};-6)\in\mathcal{H}_2(0,0)$};
\node (f) at (5,0) [nodeDecorate] {$(6;-2)\in\mathcal{H}_2(0,\mathbf{4})$};	
\node (g) at (7.5,-1) [nodeDecorate] {$(\mathbf{4};-4)\in\mathcal{H}_2(0,0)$};

\tikzstyle{EdgeStyle}=[->,>=stealth,thick,opacity=1]
\tikzstyle{LabelStyle}=[fill=white]
\foreach \startnode/\endnode/\bend in {
  a/b/bend left=0,
  a/c/bend left=0,
  b/d/bend left=0,
  b/e/bend left=0,
  c/f/bend left=0,
  c/g/bend left=0
}
{
  \Edge[style=\bend](\startnode)(\endnode)
}
		\end{tikzpicture}
		\caption{Decomposition of $\sigma$ into lists of length 2}
		\label{fig:SigmaDecomp}
	\end{figure}
\end{ex}

Note that the conditions given in (\ref{eq:n=2NS}) coincide with the definition of $\mathcal{H}_2(a_1,a_2)$ and hence $(\lambda_1,\lambda_2)$ is the spectrum of a nonnegative symmetric matrix with Perron eigenvalue $\lambda_1$ and diagonal elements $(a_1,a_2)$ if and only if $(\lambda_1;\lambda_2)\in\mathcal{H}_2(a_1,a_2)$. In the following lemma, we show that the same holds for $n=3$:

\begin{lem}\label{lem:n=3}
	Let $\lambda_1\geq\lambda_2\geq\lambda_3$ and $a_1\geq a_2\geq a_3\geq0$. If $(\lambda_1,\lambda_2,\lambda_3)$ is the spectrum of a nonnegative symmetric matrix with diagonal elements $(a_1,a_2,a_3)$, then
	\[
		(\lambda_1;\lambda_2)\in\mathcal{H}_2(a_1,c)
		\hspace{5mm}\mathrm{and}\hspace{5mm}
		(c;\lambda_3)\in\mathcal{H}_2(a_2,a_3),
	\]
	where $c:=\lambda_1+\lambda_2-a_1$. In particular, $(\lambda_1;\lambda_2,\lambda_3)\in\mathcal{H}_3(a_1,a_2,a_3)$.
\end{lem}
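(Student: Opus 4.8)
The plan is to verify directly that the two length-2 lists claimed in the statement lie in the appropriate $\mathcal{H}_2$ sets, using the explicit characterisation of $\mathcal{H}_2$ from Definition \ref{def:HDefinition}: namely $(\mu_1;\mu_2)\in\mathcal{H}_2(b_1,b_2)$ if and only if $\mu_1\geq\max\{b_1,b_2\}$ and $\mu_1+\mu_2=b_1+b_2$. Since these are exactly the conditions in (\ref{eq:n=2NS}), and since the partition $\{2,3\}=\{2\}\cup\{3\}$ together with $\{1,2,3\}=\{1,3\}\cup\{2,3\}$ — wait, more simply, the decomposition $(\lambda_1;\lambda_2,\lambda_3)\in\mathcal{H}_3(a_1,a_2,a_3)$ follows from the two $\mathcal{H}_2$ memberships by taking $k=1$, $\alpha_1=2$, $\beta_1=3$, $\gamma_1=1$, $\delta_1=2$, $\delta_2=3$ and $c$ as defined. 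So the whole content reduces to checking the $\mathcal{H}_2$ conditions.

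First I would set $c:=\lambda_1+\lambda_2-a_1$ and observe that by the necessary conditions of Theorem \ref{thm:FiedlerNecessary} (applied to the $3\times 3$ matrix, or equivalently by (\ref{eq:n=3NS})), we have $\lambda_1+\lambda_2+\lambda_3=a_1+a_2+a_3$, so $c=a_2+a_3-\lambda_3$; hence $c\geq a_2\geq 0$ since $\lambda_3\leq a_3$. This gives $c\geq 0$, which is needed for $c$ to be a legitimate diagonal/Perron value. For the pair $(c;\lambda_3)\in\mathcal{H}_2(a_2,a_3)$: the sum condition $c+\lambda_3=a_2+a_3$ holds by definition of $c$, and $c\geq\max\{a_2,a_3\}$ because $c\geq a_2$ (just shown) and $c=a_2+a_3-\lambda_3\geq a_3$ again from $\lambda_3\leq a_3$ together with $a_2\geq 0$. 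For the pair $(\lambda_1;\lambda_2)\in\mathcal{H}_2(a_1,c)$: the sum condition $\lambda_1+\lambda_2=a_1+c$ is immediate from the definition of $c$, and I need $\lambda_1\geq\max\{a_1,c\}$. Here $\lambda_1\geq a_1$ holds by (\ref{eq:n=3NS}) (it is one of the stated necessary conditions), and $\lambda_1\geq c$ is equivalent to $\lambda_1\geq\lambda_1+\lambda_2-a_1$, i.e. to $a_1\geq\lambda_2$, which is again one of the conditions in (\ref{eq:n=3NS}).

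Having established both $\mathcal{H}_2$ memberships, the conclusion $(\lambda_1;\lambda_2,\lambda_3)\in\mathcal{H}_3(a_1,a_2,a_3)$ follows immediately by applying Definition \ref{def:HDefinition} with the partitions indicated above. I do not expect any genuine obstacle here: the only thing to be careful about is correctly extracting the inequalities $\lambda_2\leq a_1$, $\lambda_3\leq a_3$ and the trace equality from (\ref{eq:n=3NS}) (which in turn rest on Theorems \ref{thm:FiedlerNecessary} and \ref{thm:FiedlerSufficient}), and then using $a_2\geq 0$ in the two places where it is needed to push the bounds on $c$ through. The argument is essentially a bookkeeping exercise matching the $n=3$ necessary-and-sufficient conditions against the definition of $\mathcal{H}_2$ applied twice.
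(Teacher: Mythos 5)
Your proof is correct and is exactly the argument the paper leaves implicit (its proof reads ``follows easily from (\ref{eq:n=3NS}) and the definition of $\mathcal{H}_2$''); you have simply written out the bookkeeping. One tiny misattribution: the bound $c\geq a_3$ amounts to $a_2\geq\lambda_3$, which follows from the ordering $a_2\geq a_3$ together with $\lambda_3\leq a_3$, not from ``$\lambda_3\leq a_3$ together with $a_2\geq0$'' as you wrote --- but the conclusion stands.
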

\begin{proof}
	The result follows easily from (\ref{eq:n=3NS}) and the definition of $\mathcal{H}_2$.
\end{proof}

Suppose that for all $n\in\{2,3,\ldots,m-1\}$ and $a_1,a_2,\ldots,a_{m-1}\geq0$, the sets $\mathcal{H}_n(a_1,a_2,\ldots,a_n)$ are known and we wish to determine whether $(\lambda_1;\lambda_2,\ldots,\lambda_m)\in\mathcal{H}_m(b_1,b_2,\ldots,b_m)$.
Our next result shows that $\mathcal{H}_m$ depends only on $\mathcal{H}_{m-1}$ and $\mathcal{H}_2$:

\begin{thm}\label{thm:HSimplification}
	Let $n\geq3$, let $\lambda_1\geq\lambda_2\geq\cdots\geq\lambda_n$ and let $a_1,a_2,\ldots,a_n${ }$\geq0$. Then $(\lambda_1;\lambda_2,\ldots,\lambda_n)\in\mathcal{H}_n(a_1,a_2,\ldots,a_n)$ if and only if there exist $s,t\in\{1,2,\ldots,n\}$, $s<t$, such that
	\begin{multline}\label{eq:NEC1}
		(\lambda_1;\lambda_2,\ldots,\lambda_{n-1})\in \\ \mathcal{H}_{n-1}(a_1,\ldots,a_{s-1},a_{s+1},\ldots,a_{t-1},a_{t+1},\ldots,a_n,c)
	\end{multline}
	and
	\begin{equation}\label{eq:NEC2}
		(c;\lambda_n)\in\mathcal{H}_2(a_s,a_t),
	\end{equation}
	where $c:=a_s+a_t-\lambda_n$.
\end{thm}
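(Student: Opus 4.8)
The plan is to prove the two directions of the equivalence separately, relying on Definition~\ref{def:HDefinition} together with the already-established $n=3$ base cases (the explicit description of $\mathcal{H}_2$ and Lemma~\ref{lem:n=3}).

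\textbf{The ``if'' direction} is essentially immediate. Suppose $s,t$ and $c:=a_s+a_t-\lambda_n$ are as in the statement, with $(\lambda_1;\lambda_2,\ldots,\lambda_{n-1})\in\mathcal{H}_{n-1}(a_1,\ldots,\widehat{a_s},\ldots,\widehat{a_t},\ldots,a_n,c)$ and $(c;\lambda_n)\in\mathcal{H}_2(a_s,a_t)$. This is literally an instance of the recursive step in Definition~\ref{def:HDefinition}: take $k=n-2$, let the first block be the index set corresponding to $\{\lambda_2,\ldots,\lambda_{n-1}\}$ paired with the diagonal entries $\{a_i : i\neq s,t\}\cup\{c\}$, and let the second block be $(c;\lambda_n)$ paired with $(a_s,a_t)$. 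Hence $(\lambda_1;\lambda_2,\ldots,\lambda_n)\in\mathcal{H}_n(a_1,\ldots,a_n)$. (One should note in passing that $c\geq0$: this follows because $(c;\lambda_n)\in\mathcal{H}_2(a_s,a_t)$ forces $c=a_s+a_t-\lambda_n$ and $c\geq\max\{a_s,a_t\}\geq 0$, so the hypothesis already guarantees it.)

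\textbf{The ``only if'' direction} is the substantive part. By Definition~\ref{def:HDefinition}, membership $(\lambda_1;\lambda_2,\ldots,\lambda_n)\in\mathcal{H}_n(a_1,\ldots,a_n)$ gives a splitting into two $\mathcal{H}$-blocks of sizes $k+1$ and $n-k$ for some $k$, glued at a nonnegative value $c'$. The issue is that this decomposition need not have its ``small'' block of size exactly $2$, nor need the removed eigenvalue be $\lambda_n$. The strategy is to induct on $n$ and push the decomposition into the required normal form. If one of the two blocks already has size $2$, say the block $(c';\lambda_j)\in\mathcal{H}_2(a_s,a_t)$, then the other block lies in $\mathcal{H}_{n-1}$, and we want to replace $\lambda_j$ by $\lambda_n$. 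This is where I expect the real work: one needs a monotonicity/exchange argument showing that if $\lambda_j\geq\lambda_n$ then we may swap the roles of $\lambda_j$ and $\lambda_n$ between the two blocks without leaving $\mathcal{H}$ — intuitively, moving the \emph{smaller} eigenvalue into the size-$2$ block and the larger one back into the size-$(n-1)$ block is ``easier'' and should be justified using the $n=3$ characterisation~(\ref{eq:n=3NS}) and the explicit $\mathcal{H}_2$ conditions, possibly after first reassociating a size-$3$ sub-chain via Lemma~\ref{lem:n=3}. If instead the small block has size $\geq 3$ (equivalently the decomposition is $(k+1)+(n-k)$ with both parts $\geq3$), I would apply the inductive hypothesis to whichever block contains $\lambda_n$ (after a further Guo-style/Fiedler argument placing $\lambda_n$ in a block, using that $\lambda_n$ is the global minimum) to peel off a size-$2$ piece of the form $(c;\lambda_n)\in\mathcal{H}_2$, then reassemble the remaining pieces into a single $\mathcal{H}_{n-1}$ membership via Definition~\ref{def:HDefinition} again — associativity of the recursive construction.

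\textbf{Main obstacle.} The crux is the exchange lemma: that $\mathcal{H}_n$ is insensitive to \emph{which} eigenvalue gets isolated in the final size-$2$ block, as long as we isolate the minimal one $\lambda_n$. Concretely I would prove: if $(c';\lambda_j)\in\mathcal{H}_2(a_s,a_t)$ and $(\lambda_1;\ldots,\lambda_j \text{ omitted} \ldots)\in\mathcal{H}_{n-1}(\ldots,c')$ with $\lambda_j\geq\lambda_n$, then setting $c:=a_s+a_t-\lambda_n$ we get $(c;\lambda_n)\in\mathcal{H}_2(a_s,a_t)$ and the $\lambda_n$-slot in the big block can be re-filled with $\lambda_j$ while the feeding value changes from $c'$ to $c\geq c'$ — and a larger Perron-type feeding value only helps, by the defining inequality $\lambda_1\geq\max\{\cdot\}$ in $\mathcal{H}_2$ and its analogue propagated up the tree. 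Verifying that ``larger $c$ is no obstruction'' throughout the recursive tree is the delicate bookkeeping; everything else is a matter of unwinding Definition~\ref{def:HDefinition} and invoking the explicit $n\leq 3$ descriptions.
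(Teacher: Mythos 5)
Your ``if'' direction is fine and matches the paper. The gap is in the ``only if'' direction, where you hang everything on an ``exchange lemma'' that is (a) unnecessary, (b) unproven, and (c) justified by a claim that points the wrong way. It is unnecessary because the paper's induction never swaps eigenvalues between blocks: since $\lambda_n$ is the \emph{global} minimum, it is automatically the minimum of whichever block of the Definition~\ref{def:HDefinition} decomposition contains it, so applying the inductive hypothesis \emph{to that block} already peels off a pair $(c;\lambda_n)\in\mathcal{H}_2(\cdot,\cdot)$ with $\lambda_n$ itself isolated --- no eigenvalue ever needs to be moved from one block to the other. And the justification you offer for the exchange (``a larger Perron-type feeding value only helps'') is backwards: the gluing value $c$ enters the \emph{large} block as a \emph{diagonal element}, and enlarging a diagonal element is an obstruction to membership, not a help (already in $\mathcal{H}_2$ one needs $\lambda_1\geq\max\{a_1,a_2\}$, and cf.\ Fiedler's necessary conditions $\lambda_1\geq a_1$, etc.). So even if some exchange statement is salvageable, the argument you sketch for it does not work, and proving it would amount to redoing the whole induction.

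You also underestimate where the real case analysis lives. When the inductive hypothesis is applied to the block containing $\lambda_n$, the $\mathcal{H}_2$ pair it produces has diagonal elements drawn from that block's diagonal list, which includes the internal gluing value $\hat c$; if the peeled pair is $(c';\lambda_n)\in\mathcal{H}_2(a_{\gamma_h},\hat c)$, then $\hat c$ is \emph{not} one of the original $a_i$ and you cannot stop there --- you must recombine $(c';\lambda_n)$ with the other block (via Definition~\ref{def:HDefinition}) and reduce to the case where $\lambda_n$ sits in a block of size $\geq 3$ whose diagonal entries are all original $a_i$'s. Likewise, the degenerate decomposition $(\lambda_1;\lambda_n)\in\mathcal{H}_2(a_h,\hat c)$ requires first peeling $\lambda_{n-1}$ off the other block and reassembling before $\lambda_n$ can be isolated against two genuine $a_i$'s. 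These reductions (the paper's Cases 3(b) and 4) are the substantive steps, and your sketch does not anticipate them; the ``associativity'' you invoke is exactly what has to be checked there. To repair the proof: drop the exchange lemma, formulate the inductive claim exactly as in the statement of the theorem, split on which block contains $\lambda_n$ and on whether the $\mathcal{H}_2$ pair returned by the inductive hypothesis uses the gluing value, and handle the two re-absorption cases explicitly.
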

\begin{proof}
	That (\ref{eq:NEC1}) and (\ref{eq:NEC2}) imply $(\lambda_1;\lambda_2,\ldots,\lambda_n)\in\mathcal{H}_n(a_1,a_2,\ldots,a_n)$ follows from Definition \ref{def:HDefinition}. Conversely, assume $(\lambda_1;\lambda_2,\ldots,\lambda_n)\in\mathcal{H}_n(a_1,a_2,\ldots,a_n)$. We claim that there exist $s$ and $t$ such that (\ref{eq:NEC1}) and (\ref{eq:NEC2}) hold.	
	
	We prove our claim by induction on $n$. If $n=3$, then the claim follows from Lemma \ref{lem:n=3}. Now assume the claim holds for all $n\in\{3,4,\ldots,m-1\}$, $m\geq4$, and suppose $(\lambda_1;\lambda_2,\ldots,\lambda_m)\in\mathcal{H}_m(a_1,a_2,${ }$\ldots,a_m)$. Then there exist a partition of $\{2,3,\ldots,m\}$ into two subsets $\{\alpha_1,\alpha_2,\ldots,\alpha_{k}\}$ and $\{\beta_1,\beta_2,\ldots,\beta_{m-k-1}\}$, a partition of $\{1,2,\ldots,m\}$ into two subsets $\{\gamma_1,\gamma_2,${ }$\ldots,\gamma_k\}$ and $\{\delta_1,\delta_2,\ldots,\delta_{m-k}\}$ and a nonnegative number $\hat{c}$ such that
	\begin{equation}\label{eq:FirstSplit1}
		(\lambda_1;\lambda_{\alpha_1},\lambda_{\alpha_2},\ldots,\lambda_{\alpha_k})\in\mathcal{H}_{k+1}(a_{\gamma_1},a_{\gamma_2},\ldots,a_{\gamma_k},\hat{c})
	\end{equation}
	and
	\begin{equation}\label{eq:FirstSplit2}
		(\hat{c};\lambda_{\beta_1},\lambda_{\beta_2},\ldots,\lambda_{\beta_{m-k-1}})\in\mathcal{H}_{m-k}(a_{\delta_1},a_{\delta_2},\ldots,a_{\delta_{m-k}}).
	\end{equation}
	We will show that this implies the existence of some $s$ and $t$ such that
	\begin{multline}\label{eq:NEC1m}
		(\lambda_1;\lambda_2,\ldots,\lambda_{m-1})\in \\ \mathcal{H}_{m-1}(a_1,\ldots,a_{s-1},a_{s+1},\ldots,a_{t-1},a_{t+1},\ldots,a_m,c)
	\end{multline}
	and
	\begin{equation}\label{eq:NEC2m}
		(c;\lambda_m)\in\mathcal{H}_2(a_s,a_t),
	\end{equation}
	where $c:=a_s+a_t-\lambda_m$.
	
	Without loss of generality, assume that the $\alpha_i$, $\beta_i$, $\gamma_i$ and $\delta_i$ are labelled so that $\alpha_1<\alpha_2<\cdots<\alpha_k$, $\beta_1<\beta_2<\cdots<\beta_{m-k-1}$, $\gamma_1<\gamma_2<\cdots<\gamma_k$ and $\delta_1<\delta_2<\cdots<\delta_{m-k}$. Since the $\lambda_i$ are ordered also, we must have either $\alpha_k=m$ or $\beta_{m-k-1}=m$. If $\alpha_k=m$, then we must distinguish the cases $k=1$ and $k>1$. If $\beta_{m-k-1}=m$, then we must distinguish the cases $k=m-2$ and $k<m-2$. In summary, we need to consider four possible cases:
	\begin{center}
		\begin{tabular}{lc}
			\hline
			\hspace{3mm} Case 1: & $k=m-2$, $\beta_1=m$ \\
			\hspace{3mm} Case 2: & $k<m-2$, \:$\beta_{m-k-1}=m$\hspace{3mm} \\
			\hspace{3mm} Case 3: & $k>1$, $\alpha_k=m$ \\
			\hspace{3mm} Case 4: & $k=1$, $\alpha_1=m$ \\
			\hline
		\end{tabular}
	\end{center}
	
	\underline{\emph{Case 1:}} There is nothing to prove.
	
	\underline{\emph{Case 2:}} Applying the inductive hypothesis to (\ref{eq:FirstSplit2}), there exist $\hat{s},\hat{t}\in\{1,2,\ldots,m-k\}$, $\hat{s}<\hat{t}$, such that
	\begin{multline}\label{eq:NEC3}
		(\hat{c};\lambda_{\beta_1},\lambda_{\beta_2},\ldots,\lambda_{\beta{m-k-2}})\in \\ \mathcal{H}_{m-k-1}(a_{\delta_1},\ldots,a_{\delta_{\hat{s}-1}},a_{\delta_{\hat{s}+1}},\ldots,a_{\delta_{\hat{t}-1}},a_{\delta_{\hat{t}+1}},\ldots,a_{\delta_{m-k}},c)
	\end{multline}
	and
	\begin{equation}\label{eq:NEC4}
		(c;\lambda_m)\in\mathcal{H}_2(a_{\delta_{\hat{s}}},a_{\delta{\hat{t}}}),
	\end{equation}
	where $c:=a_{\delta_{\hat{s}}}+a_{\delta{\hat{t}}}-\lambda_m$. Hence, if we let $s=\delta_{\hat{s}}$ and $t=\delta_{\hat{t}}$, then by Definition \ref{def:HDefinition}, (\ref{eq:FirstSplit1}) and (\ref{eq:NEC3}) imply (\ref{eq:NEC1m}), and (\ref{eq:NEC4}) becomes (\ref{eq:NEC2m}). Hence, in Case 2, we have completed the inductive step and established our claim. In the remainder of the proof, we will make frequent use of Definition \ref{def:HDefinition}.
	
	\underline{\emph{Case 3:}} Applying the inductive hypothesis to (\ref{eq:FirstSplit1}), we see that one of the following two sub-cases must hold:
	
	\underline{\emph{Case 3 (a):}} There exist $\hat{s},\hat{t}\in\{1,2,\ldots,k\}$, $\hat{s}<\hat{t}$, such that
	\begin{multline}\label{eq:NEC5}
		(\lambda_1;\lambda_{\alpha_1},\lambda_{\alpha_2},\ldots,\lambda_{\alpha_{k-1}})\in \\
		\mathcal{H}_k(a_{\gamma_1},\ldots,a_{\gamma_{\hat{s}-1}},a_{\gamma_{\hat{s}+1}},\ldots,a_{\gamma_{\hat{t}-1}},a_{\gamma_{\hat{t}+1}},\ldots,a_{\gamma_k},\hat{c},c)
	\end{multline}
	and
	\begin{equation}\label{eq:NEC6}
		(c;\lambda_m)\in\mathcal{H}_2(a_{\gamma_{\hat{s}}},a_{\gamma_{\hat{t}}}),
	\end{equation}
	where $c:=a_{\gamma_{\hat{s}}}+a_{\gamma_{\hat{t}}}-\lambda_m$. In this case, if we let $s=\gamma_{\hat{s}}$ and $t=\gamma_{\hat{t}}$, then (\ref{eq:NEC5}) and (\ref{eq:FirstSplit2}) give (\ref{eq:NEC1m}), and (\ref{eq:NEC6}) is becomes (\ref{eq:NEC2m}), which establishes the claim in Case 3 (a).
	
	\underline{\emph{Case 3 (b):}} There exists $h\in\{1,2,\ldots,k\}$ such that
	\begin{equation}\label{eq:NEC7}
		(\lambda_1;\lambda_{\alpha_1},\lambda_{\alpha_2},\ldots,\lambda_{\alpha_{k-1}})\in \\
		\mathcal{H}_k(a_{\gamma_1},\ldots,a_{\gamma_{h-1}},a_{\gamma_{h+1}},\ldots,a_{\gamma_k},c')
	\end{equation}
	and
	\begin{equation}\label{eq:NEC8}
		(c';\lambda_m)\in\mathcal{H}_2(a_{\gamma_h},\hat{c}),
	\end{equation}
	where $c':=a_{\gamma_h}+\hat{c}-\lambda_m$. In this case, (\ref{eq:NEC8}) and (\ref{eq:FirstSplit2}) imply
	\begin{equation}\label{eq:NEC9}
		(c';\lambda_{\beta_1},\lambda_{\beta_2},\ldots,\lambda_{\beta_{m-k-1}},\lambda_m)\in\mathcal{N}_{m-k+1}(a_{\delta_1},a_{\delta_2},\ldots,a_{\delta_{m-k}},a_{\gamma_h}).
	\end{equation}
	Form equations (\ref{eq:NEC7}) and (\ref{eq:NEC9}), we see that we have reduced the problem to Case 2. 
	
	\underline{\emph{Case 4:}} In Case 4, if we set $h:=\gamma_1$, then (\ref{eq:FirstSplit1}) and (\ref{eq:FirstSplit2}) become
	\begin{equation}\label{eq:NEC10}
		(\lambda_1;\lambda_m)\in\mathcal{H}_2(a_h,\hat{c})
	\end{equation}
	and
	\begin{equation}\label{eq:NEC11}
		(\hat{c};\lambda_2,\lambda_3,\ldots,\lambda_{m-1})\in\mathcal{H}_{m-1}(a_1,\ldots,a_{h-1},a_{h+1},\ldots,a_m),
	\end{equation}
	respectively. Applying the inductive hypothesis to (\ref{eq:NEC11}), there exist $p,q\in\{1,2,\ldots,m\}\setminus\{h\}$, $p<q$, such that
	\begin{equation}\label{eq:NEC12}
		(\hat{c};\lambda_2,\lambda_3,\ldots,\lambda_{m-2})\in\mathcal{H}_{m-2}(a_{r_1},a_{r_2},\ldots,a_{r_{m-3}},c')
	\end{equation}
	and
	\begin{equation}\label{eq:NEC13}
		(c';\lambda_{m-1})\in\mathcal{H}_2(a_p,a_q),
	\end{equation}
	where $\{r_1,r_2,\ldots,r_{m-3}\}=\{1,2,\ldots,m\}\setminus\{p,q,h\}$ and $c':=a_p+a_q-\lambda_{m-1}$. Then, by (\ref{eq:NEC10}) and (\ref{eq:NEC12}),
	\begin{equation}\label{eq:NEC14}
		(\lambda_1;\lambda_2,\lambda_3,\ldots,\lambda_{m-2},\lambda_m)\in\mathcal{H}_{m-1}(a_{r_1},a_{r_2},\ldots,a_{r_{m-3}},a_h,c'),
	\end{equation}
	and now, examining (\ref{eq:NEC14}) and (\ref{eq:NEC13}), we see that we have reduced the problem to Case 3. This establishes our claim in Case 4 and completes the proof.
\end{proof}

Recall Observation \ref{obv:Union} and Theorems \ref{thm:PerronIncrease} and \ref{thm:Guo}, which are the foundation of the definition of C-realisability. In order to prove that the lists in $\mathcal{H}_n$ are precisely the C-realisable lists, we need to show that the analogues of these three theorems hold also for lists in $\mathcal{H}_n$. This is the focus of our next three results.

Firstly, observe that, trivially, $(\lambda_1;\mu_1)\in\mathcal{H}_2(\lambda_1,\mu_1)$. Therefore
	\[
		(\lambda_1;\lambda_2,\ldots,\lambda_m,\mu_1)\in\mathcal{H}_{m+1}(a_1,a_2,\ldots,a_m,\mu_1)
	\]
	and hence we have:

\begin{obv}\label{obv:UnionForSoules}
	If $(\lambda_1;\lambda_2,\ldots,\lambda_m)\in\mathcal{H}_m(a_1,a_2,\ldots,a_m)$, $(\mu_1;\mu_2,\ldots,\mu_n)${ }$\in\mathcal{H}_n(b_1,b_2,\ldots,b_n)$ and $\lambda_1\geq\mu_1$, then
	\[
		(\lambda_1;\lambda_2,\ldots,\lambda_m,\mu_1,\mu_2,\ldots,\mu_n)\in\mathcal{H}_{m+n}(a_1,a_2,\ldots,a_m,b_1,b_2,\ldots,b_n).
	\]
\end{obv}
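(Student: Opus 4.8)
The plan is to obtain the conclusion by two successive applications of Definition \ref{def:HDefinition}, the only substantive input being the trivial fact (noted just before the statement) that, since $\lambda_1\geq\mu_1$, we have $(\lambda_1;\mu_1)\in\mathcal{H}_2(\lambda_1,\mu_1)$: indeed $\lambda_1\geq\max\{\lambda_1,\mu_1\}$ and $\lambda_1+\mu_1=\lambda_1+\mu_1$. I will also use that the Perron entry of any list lying in some $\mathcal{H}_k(\cdots)$ is nonnegative (immediate from the two base cases of Definition \ref{def:HDefinition} together with the recursion, or from Lemma \ref{lem:SmigocSDLemma}); in particular $\lambda_1\geq 0$ and $\mu_1\geq 0$.

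\emph{Step 1: pushing $\mu_1$ onto the first list.} First I would show that $(\lambda_1;\lambda_2,\ldots,\lambda_m,\mu_1)\in\mathcal{H}_{m+1}(a_1,\ldots,a_m,\mu_1)$. Apply Definition \ref{def:HDefinition} to this $(m+1)$-entry list with $k=1$, choosing $\{2,\ldots,m+1\}=\{m+1\}\cup\{2,\ldots,m\}$, $\{1,\ldots,m+1\}=\{m+1\}\cup\{1,\ldots,m\}$ and internal value $c:=\lambda_1$ (which is $\geq 0$). The first required membership is then $(\lambda_1;\mu_1)\in\mathcal{H}_2(\mu_1,\lambda_1)$, which holds because $\lambda_1\geq\max\{\mu_1,\lambda_1\}=\lambda_1$ (this is where $\lambda_1\geq\mu_1$ is used) and $\lambda_1+\mu_1=\mu_1+\lambda_1$; the second required membership is exactly the first hypothesis $(\lambda_1;\lambda_2,\ldots,\lambda_m)\in\mathcal{H}_m(a_1,\ldots,a_m)$. (When $m=1$ the displayed statement is simply the defining condition for $\mathcal{H}_2$.)

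\emph{Step 2: appending the second list.} Now apply Definition \ref{def:HDefinition} to the combined $(m+n)$-entry list $(\lambda_1;\lambda_2,\ldots,\lambda_m,\mu_1,\mu_2,\ldots,\mu_n)$ with $k=m$, choosing $\{2,\ldots,m+n\}=\{2,\ldots,m+1\}\cup\{m+2,\ldots,m+n\}$, $\{1,\ldots,m+n\}=\{1,\ldots,m\}\cup\{m+1,\ldots,m+n\}$ and internal value $c:=\mu_1$ (which is $\geq 0$). The first required membership is $(\lambda_1;\lambda_2,\ldots,\lambda_m,\mu_1)\in\mathcal{H}_{m+1}(a_1,\ldots,a_m,\mu_1)$, supplied by Step 1; the second is the hypothesis $(\mu_1;\mu_2,\ldots,\mu_n)\in\mathcal{H}_n(b_1,\ldots,b_n)$. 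Definition \ref{def:HDefinition} then gives $(\lambda_1;\lambda_2,\ldots,\lambda_m,\mu_1,\mu_2,\ldots,\mu_n)\in\mathcal{H}_{m+n}(a_1,\ldots,a_m,b_1,\ldots,b_n)$, which is the claim.

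I do not expect a genuine obstacle: the argument is essentially bookkeeping with the four index sets of Definition \ref{def:HDefinition}. The two points needing (minor) care are lining up the partition blocks $\{\alpha_i\},\{\beta_i\}$ and $\{\gamma_i\},\{\delta_i\}$ with the intended sublists of eigenvalues and diagonal entries, and verifying that each chosen internal value $c$ is nonnegative --- which in both steps comes down to $\lambda_1\geq\mu_1\geq 0$. (One could merge the two steps into a single application of Definition \ref{def:HDefinition} by splitting $\mu_1$ off the big list at once, but keeping them separate makes the choice of partitions cleaner.)
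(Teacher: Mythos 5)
Your proposal is correct and follows exactly the paper's own (very brief) argument: the paper likewise notes the trivial membership $(\lambda_1;\mu_1)\in\mathcal{H}_2(\lambda_1,\mu_1)$, deduces $(\lambda_1;\lambda_2,\ldots,\lambda_m,\mu_1)\in\mathcal{H}_{m+1}(a_1,\ldots,a_m,\mu_1)$, and then applies Definition \ref{def:HDefinition} once more with the second hypothesis. Your extra remarks on the nonnegativity of the internal values $c$ and the $m=1$ edge case are correct bookkeeping that the paper leaves implicit.
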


By Observation \ref{obv:UnionForSoules}, $\mathcal{H}$ is closed under union. Mirroring our definitions of $\mathcal{S}$ and $\overline{\mathcal{S}}$, we would like to have a notion of ``irreducibility'' for lists in $\mathcal{H}_n$. Therefore, we make the following definition:

\begin{defin}
	We write
	\begin{equation}\label{eq:H*Def}
		(\lambda_1;\lambda_2,\ldots,\lambda_n)\in\mathcal{H}^*_n(a_1,a_2,\ldots,a_n)
	\end{equation}
	if $(\lambda_1;\lambda_2,\ldots,\lambda_n)\in\mathcal{H}_n(a_1,a_2,\ldots,a_n)$ and there \emph{do not exist} partitions
	\begin{gather*}
		\{1,2,\ldots,n\}=\{p_1,p_2,\ldots,p_l\}\cup\{q_1,q_2,\ldots,q_{n-l}\}, \\
		\{1,2,\ldots,n\}=\{r_1,r_2,\ldots,r_l\}\cup\{s_1,s_2,\ldots,s_{n-l}\},
	\end{gather*}
	such that
	\[
		(\lambda_{p_1};\lambda_{p_2},\ldots,\lambda_{p_l})\in\mathcal{H}_l(a_{r_1},a_{r_2},\ldots,a_{r_l})
	\]
	and
	\[
		(\lambda_{q_1};\lambda_{q_2},\ldots,\lambda_{q_{n-l}})\in\mathcal{H}_{n-l}(a_{s_1},a_{s_2},\ldots,a_{s_{n-l}}).
	\]
	We write
	\[
		(\lambda_1,\lambda_2,\ldots,\lambda_n)\in\mathcal{H}^*_n
	\]
	if there exist $a_1,a_2,\dots,a_n\geq0$ such that (\ref{eq:H*Def}) holds.
\end{defin}

We now give the analogue of Theorem \ref{thm:PerronIncrease} for lists in $\mathcal{H}_n$:

\begin{lem}\label{lem:SymmetricPerronLemma}
	If $(\rho;\lambda_2,\lambda_3,\ldots,\lambda_n)\in\mathcal{H}_n(a_1,a_2,\ldots,a_n)$, then for all $\epsilon\geq0$,
	\[
		(\rho+\epsilon;\lambda_2,\lambda_3,\ldots,\lambda_n)\in\mathcal{H}_n(a_1+\epsilon,a_2,a_3,\ldots,a_n).
	\]
\end{lem}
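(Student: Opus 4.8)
The plan is to induct on $n$, using Theorem \ref{thm:HSimplification} to peel off the last eigenvalue $\lambda_n$ and reduce to a list of length $n-1$. For the base case $n=2$, we have $(\rho;\lambda_2)\in\mathcal{H}_2(a_1,a_2)$, which by definition means $\rho\geq\max\{a_1,a_2\}$ and $\rho+\lambda_2=a_1+a_2$. Replacing $\rho$ by $\rho+\epsilon$ and $a_1$ by $a_1+\epsilon$ clearly preserves both conditions (the Perron value only increases, and both sides of the sum equation increase by $\epsilon$), so $(\rho+\epsilon;\lambda_2)\in\mathcal{H}_2(a_1+\epsilon,a_2)$.

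For the inductive step, suppose the result holds for lists of length $n-1$ and let $(\rho;\lambda_2,\ldots,\lambda_n)\in\mathcal{H}_n(a_1,\ldots,a_n)$ with $n\geq3$. We may assume $\lambda_2\geq\cdots\geq\lambda_n$ (Theorem \ref{thm:HSimplification} is stated for ordered lists; note that since $\rho$ is the Perron value, $\rho\geq\lambda_2$, so $\rho\geq\lambda_2\geq\cdots\geq\lambda_n$ as required). By Theorem \ref{thm:HSimplification}, there exist $s<t$ and $c:=a_s+a_t-\lambda_n$ such that $(c;\lambda_n)\in\mathcal{H}_2(a_s,a_t)$ and
\[
	(\rho;\lambda_2,\ldots,\lambda_{n-1})\in\mathcal{H}_{n-1}(a_1,\ldots,\widehat{a_s},\ldots,\widehat{a_t},\ldots,a_n,c).
\]
Now I distinguish whether the index $1$ equals $s$ or $t$, or not. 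If $1\notin\{s,t\}$, then $a_1$ appears among the diagonal elements of the length-$(n-1)$ list, and by the inductive hypothesis $(\rho+\epsilon;\lambda_2,\ldots,\lambda_{n-1})\in\mathcal{H}_{n-1}(a_1+\epsilon,\ldots,\widehat{a_s},\ldots,\widehat{a_t},\ldots,a_n,c)$; combining this with the unchanged relation $(c;\lambda_n)\in\mathcal{H}_2(a_s,a_t)$ via Definition \ref{def:HDefinition} (equivalently Theorem \ref{thm:HSimplification} in the other direction) gives $(\rho+\epsilon;\lambda_2,\ldots,\lambda_n)\in\mathcal{H}_n(a_1+\epsilon,a_2,\ldots,a_n)$, as desired.

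The case $1\in\{s,t\}$, say $s=1$, is the main obstacle: here $a_1$ is consumed in the bottom-level pair $(c;\lambda_n)\in\mathcal{H}_2(a_1,a_t)$ rather than passed up to the recursive list. The idea is to push the $\epsilon$ increment through to $c$ instead. First, by the base case applied to $(c;\lambda_n)\in\mathcal{H}_2(a_1,a_t)$, we get $(c+\epsilon;\lambda_n)\in\mathcal{H}_2(a_1+\epsilon,a_t)$. Next, in the length-$(n-1)$ list $(\rho;\lambda_2,\ldots,\lambda_{n-1})\in\mathcal{H}_{n-1}(a_2,\ldots,\widehat{a_t},\ldots,a_n,c)$, the value $c$ plays the role of one of the diagonal elements, so by the inductive hypothesis we may increase both $\rho$ and $c$ by $\epsilon$: $(\rho+\epsilon;\lambda_2,\ldots,\lambda_{n-1})\in\mathcal{H}_{n-1}(a_2,\ldots,\widehat{a_t},\ldots,a_n,c+\epsilon)$. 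Finally, noting $c+\epsilon=(a_1+\epsilon)+a_t-\lambda_n$, Theorem \ref{thm:HSimplification} (or Definition \ref{def:HDefinition}) recombines these two facts into $(\rho+\epsilon;\lambda_2,\ldots,\lambda_n)\in\mathcal{H}_n(a_1+\epsilon,a_2,\ldots,a_n)$. The one subtlety to check carefully is that the inductive hypothesis, as stated, increments $\rho$ and the \emph{first} listed diagonal element; but since the $a_i$ in $\mathcal{H}_n(a_1,\ldots,a_n)$ are unordered and may appear in any order, we are free to designate $c$ (or $c+\epsilon$) as that distinguished diagonal entry, so this causes no difficulty.
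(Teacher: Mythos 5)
Your proof is correct. The core mechanism is the same as the paper's: induction on $n$, a case split on whether the index $1$ lands in the sub-list that gets passed up recursively or in the piece that absorbs it, and in the latter case routing the increment $\epsilon$ through the glue value $c$ (using that the diagonal elements are unordered, so $c$ may play the role of the distinguished entry). The one genuine difference is the decomposition you use: you invoke Theorem \ref{thm:HSimplification} to normalise the splitting so that only $(c;\lambda_n)$ is peeled off, which collapses the case analysis to just two cases ($1\in\{s,t\}$ or not). The paper instead works directly from Definition \ref{def:HDefinition}, splitting into two pieces of arbitrary sizes $(k+1)$ and $(m-k)$, and distinguishes $1\in\{\gamma_1,\ldots,\gamma_k\}$ from $1\in\{\delta_1,\ldots,\delta_{m-k}\}$; in the second case it applies the inductive hypothesis twice (once to each piece) exactly as you do. Your route buys a slightly cleaner case structure at the cost of leaning on the heavier Theorem \ref{thm:HSimplification}; since that theorem is proved before this lemma and its proof does not use it, there is no circularity, and your handling of the ordering hypothesis ($\rho\geq\lambda_2\geq\cdots\geq\lambda_n$ after relabelling) needed to apply it is correct.
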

\begin{rem}
	Note that since the $a_i$ are unordered in Lemma \ref{lem:SymmetricPerronLemma}, $a_1$ can take the place of any of the diagonal elements $a_1,a_2,\ldots,a_n$.
\end{rem}
\begin{proof}[Proof of Lemma \ref{lem:SymmetricPerronLemma}]
	We proceed by induction on $n$. If $n=2$ and $(\rho;\lambda_2)\in\mathcal{H}_2(a_1,a_2)$, then $\rho\geq\max\{a_1,a_2\}$ and $\rho+\lambda_2=a_1+a_2$. Hence $\rho+\epsilon\geq\max\{a_1+\epsilon,a_2\}$ and $(\rho+\epsilon)+\lambda_2=(a_1+\epsilon)+a_2$. Therefore $(\rho+\epsilon;\lambda_2)\in\mathcal{H}_2(a_1+\epsilon,a_2)$.
	
	Now assume that the assertion holds for all $n\in\{2,3,\ldots,m-1\}$ and consider the case when $n=m$. If
	$
		(\rho;\lambda_2,\lambda_3,\ldots,\lambda_m)\in\mathcal{H}_m(a_1,a_2,\ldots,${ }$a_m),
	$
	then there exist a partition of $\{2,3,\ldots,m\}$ into two subsets $\{\alpha_1,${ }$\alpha_2,\ldots,\alpha_{k}\}$ and $\{\beta_1,\beta_2,\ldots,\beta_{m-k-1}\}$, a partition of $\{1,2,\ldots,m\}$ into two subsets $\{\gamma_1,\gamma_2,\ldots,\gamma_k\}$ and $\{\delta_1,\delta_2,\ldots,\delta_{m-k}\}$ and a nonnegative number $c$ such that
	\begin{equation}\label{eq:PerronLem1}
		(\rho;\lambda_{\alpha_1},\lambda_{\alpha_2},\ldots,\lambda_{\alpha_k})\in\mathcal{H}_{k+1}(a_{\gamma_1},a_{\gamma_2},\ldots,a_{\gamma_k},c)
	\end{equation}
	and
	\begin{equation}\label{eq:PerronLem3}
		(c;\lambda_{\beta_1},\lambda_{\beta_2},\ldots,\lambda_{\beta_{m-k-1}})\in\mathcal{H}_{m-k}(a_{\delta_1},a_{\delta_2},\ldots,a_{\delta_{m-k}}).
	\end{equation}
	We now distinguish two possible cases: the case when $1\in\{\gamma_1,\gamma_2,\ldots,${ }$\gamma_k\}$ and the case when $1\in\{\delta_1,\delta_2,\ldots,\delta_{m-k}\}$.
	
	Suppose $1\in\{\gamma_1,\gamma_2,\ldots,\gamma_k\}$. Without loss of generality, we may assume that $1=\gamma_1$. By (\ref{eq:PerronLem1}) and the inductive hypothesis, we have that
	\begin{equation}\label{eq:PerronLem2}
		(\rho+\epsilon;\lambda_{\alpha_1},\lambda_{\alpha_2},\ldots,\lambda_{\alpha_k})\in\mathcal{H}_{k+1}(a_1+\epsilon,a_{\gamma_2},a_{\gamma_3},\ldots,a_{\gamma_k},c)
	\end{equation}
	and hence by (\ref{eq:PerronLem2}) and (\ref{eq:PerronLem3}),
	\begin{equation}\label{eq:PerronLem4}
		(\rho+\epsilon;\lambda_2,\lambda_3,\ldots,\lambda_m)\in\mathcal{H}_m(a_1+\epsilon,a_2,a_3,\ldots,a_m).
	\end{equation}
	
	Now suppose $1\in\{\delta_1,\delta_2,\ldots,\delta_{m-k}\}$. Without loss of generality, we may assume that $1=\delta_1$. Applying the inductive hypothesis to (\ref{eq:PerronLem1}) and (\ref{eq:PerronLem3}), gives
	\[
		(\rho+\epsilon;\lambda_{\alpha_1},\lambda_{\alpha_2},\ldots,\lambda_{\alpha_k})\in\mathcal{H}_{k+1}(a_{\gamma_1},a_{\gamma_2},\ldots,a_{\gamma_k},c+\epsilon)
	\]
	and
	\[
		(c+\epsilon;\lambda_{\beta_1},\lambda_{\beta_2},\ldots,\lambda_{\beta_{m-k-1}})\in\mathcal{H}_{m-k}(a_1+\epsilon,a_{\delta_2},a_{\delta_3},\ldots,a_{\delta_{m-k}}),
	\]
	respectively. Hence (\ref{eq:PerronLem4}) holds, as before.
\end{proof}

Note that Lemma \ref{lem:SymmetricPerronLemma} is true in general: in \cite{LSSymmetric}, Laffey and \v{S}migoc show that if $(\rho,\lambda_2,\lambda_3,\ldots,\lambda_n)$ is the spectrum of an irreducible nonnegative symmetric matrix with diagonal elements $(a_1,a_2,\ldots,a_n)$, then for all $\epsilon\geq0$, $(\rho+\epsilon,\lambda_2,\lambda_3,\ldots,\lambda_n)$ is the spectrum of a nonnegative symmetric matrix with diagonal elements $(a_1+\epsilon,a_2,a_3,\ldots,a_n)$.

We now give the analogue of Theorem \ref{thm:Guo} for lists in $\mathcal{H}_n$:

\begin{thm}\label{thm:GuoForSoules}
	Suppose $(\rho;\lambda_2,\lambda_3,\ldots,\lambda_n)\in\mathcal{H}_n(a_1,a_2,\ldots,a_n)$ and $\epsilon\geq0$. Then
	\begin{enumerate}
		\item[\textup{(i)}] $(\rho+\epsilon;\lambda_2-\epsilon,\lambda_3,\lambda_4,\ldots,\lambda_n)\in\mathcal{H}_n(a_1,a_2,\ldots,a_n)$;
		\item[\textup{(ii)}] there exist $s,t\in\{1,2,\ldots,n\}$, $s<t$, such that
		\begin{multline*}
			(\rho+\epsilon,\lambda_2+\epsilon,\lambda_3,\lambda_4,\ldots,\lambda_n)\in\\ \mathcal{H}_n(a_1,\ldots,a_{s-1},a_s+\epsilon,a_{s+1},\ldots,a_{t-1},a_t+\epsilon,a_{t+1},\ldots,a_n).
		\end{multline*}
	\end{enumerate}
	In particular, if $(\rho,\lambda_2,\lambda_3,\ldots,\lambda_n)\in\mathcal{H}_n$, then $(\rho+\epsilon,\lambda_2\pm\epsilon,\lambda_3,\ldots,\lambda_n)\in\mathcal{H}_n$.
\end{thm}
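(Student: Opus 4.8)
The plan is to prove (i) and (ii) separately, each by induction on $n$, the key device being Theorem~\ref{thm:HSimplification}: in the inductive step it lets us split a single eigenvalue off into an $\mathcal{H}_2$-block, reducing to the same statement one dimension down.

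For (i), the base case $n=2$ is a one-line check: $(\rho;\lambda_2)\in\mathcal{H}_2(a_1,a_2)$ says $\rho\geq\max\{a_1,a_2\}$ and $\rho+\lambda_2=a_1+a_2$, and both conditions survive the replacement $(\rho,\lambda_2)\mapsto(\rho+\epsilon,\lambda_2-\epsilon)$. For $n=m\geq 3$ I would apply Theorem~\ref{thm:HSimplification} to peel off an eigenvalue other than $\lambda_2$ (using the unordered convention to make it $\lambda_m$), obtaining indices $s<t$ with $(\rho;\lambda_2,\ldots,\lambda_{m-1})\in\mathcal{H}_{m-1}(\ldots,c)$ and $(c;\lambda_m)\in\mathcal{H}_2(a_s,a_t)$, where $c=a_s+a_t-\lambda_m$ and the first block's diagonal list is $\{a_i:i\notin\{s,t\}\}$ together with $c$. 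Since $\lambda_2$ still occurs in the $(m-1)$-block, the inductive hypothesis for (i) replaces $(\rho,\lambda_2)$ by $(\rho+\epsilon,\lambda_2-\epsilon)$ there with no change to the diagonal entries; recombining with the untouched $\mathcal{H}_2$-block via Definition~\ref{def:HDefinition} finishes the step.

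For (ii), the base case $n=2$ is again direct: take $s=1$, $t=2$; from $\rho\geq\max\{a_1,a_2\}$ and $\rho+\lambda_2=a_1+a_2$ one gets $\rho+\epsilon\geq\max\{a_1+\epsilon,a_2+\epsilon\}$ and $(\rho+\epsilon)+(\lambda_2+\epsilon)=(a_1+\epsilon)+(a_2+\epsilon)$. For $n=m\geq3$ I would peel off $\lambda_m$ as before, getting $(\rho;\lambda_2,\ldots,\lambda_{m-1})\in\mathcal{H}_{m-1}(\ldots,c)$ and $(c;\lambda_m)\in\mathcal{H}_2(a_s,a_t)$, where $c\geq\max\{a_s,a_t\}$ and $c+\lambda_m=a_s+a_t$. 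Applying the inductive hypothesis for (ii) to the $(m-1)$-block produces $(\rho+\epsilon,\lambda_2+\epsilon,\lambda_3,\ldots,\lambda_{m-1})\in\mathcal{H}_{m-1}$ in which exactly two of the $m-1$ diagonal entries are raised by $\epsilon$; these two entries are either two of the $a_i$, or one $a_i$ and the glue value $c$ (it occurs once, so not both can be $c$). In the first case I recombine with the untouched block $(c;\lambda_m)\in\mathcal{H}_2(a_s,a_t)$ and am done. In the second case, say the raised entries are $a_p$ (with $p\notin\{s,t\}$) and $c$; here I use that $c\geq\max\{a_s,a_t\}$ and $c+\lambda_m=a_s+a_t$ imply $c+\epsilon\geq\max\{a_s+\epsilon,a_t\}$ and $(c+\epsilon)+\lambda_m=(a_s+\epsilon)+a_t$, i.e.\ $(c+\epsilon;\lambda_m)\in\mathcal{H}_2(a_s+\epsilon,a_t)$, so recombining the $(m-1)$-block (now with glue value $c+\epsilon$) with this block trades the surplus on $c$ for a surplus on $a_s$; the two raised diagonal entries of the resulting $m$-list are then $a_p$ and $a_s$, with distinct indices. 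The closing ``In particular'' statement then follows by applying (i) and (ii) to any nonnegative $a_1,\ldots,a_n$ witnessing $(\rho,\lambda_2,\ldots,\lambda_n)\in\mathcal{H}_n$.

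The main obstacle I expect is exactly that last case of (ii): when the inductive hypothesis increases the auxiliary glue value produced by the peeling step rather than an original diagonal entry. The resolution hinges on the observation that the defining condition of $(c;\lambda_m)\in\mathcal{H}_2(a_s,a_t)$ --- a trace equality plus one ``max'' inequality --- is preserved when $c$ and one of $a_s,a_t$ are increased by the same amount, so the extra $\epsilon$ on $c$ can always be pushed down onto a genuine diagonal element. The remaining work is bookkeeping (ensuring $\lambda_2$ survives the peeling step and tracking which diagonal entries get modified), most of which is smoothed by the paper's convention that the $\lambda_i$ for $i\geq2$ and the $a_i$ are unordered.
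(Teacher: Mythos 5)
Your overall strategy is sound and your treatment of the delicate point you identified in (ii) --- the inductive hypothesis raising the glue value $c$ rather than a genuine diagonal entry, resolved by trading $(c;\lambda_m)\in\mathcal{H}_2(a_s,a_t)$ for $(c+\epsilon;\lambda_m)\in\mathcal{H}_2(a_s+\epsilon,a_t)$ --- is correct and is, in effect, a size-two instance of the paper's use of Lemma \ref{lem:SymmetricPerronLemma}. However, there is a genuine gap at the very first move of your inductive step. You ``apply Theorem \ref{thm:HSimplification} to peel off an eigenvalue other than $\lambda_2$,'' but Theorem \ref{thm:HSimplification} is stated for an ordered list and its nontrivial direction only guarantees a decomposition in which the \emph{smallest} eigenvalue $\lambda_n$ is split off into the $\mathcal{H}_2$-block. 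Since the eigenvalue being perturbed may be any entry of the list (as your own closing remark relies on), it may be the unique minimum, in which case the only peeling Theorem \ref{thm:HSimplification} supplies removes precisely the eigenvalue you need to keep inside the $(m-1)$-block, and your reduction to the inductive hypothesis does not apply. No relabelling under the ``unordered convention'' rescues this: the convention concerns the statement being proved, not the hypothesis of Theorem \ref{thm:HSimplification}.

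The missing case is repairable, but it needs an ingredient you never invoke, namely Lemma \ref{lem:SymmetricPerronLemma}. If the perturbed eigenvalue $\mu$ is the one peeled off, so that $(c;\mu)\in\mathcal{H}_2(a_s,a_t)$ and $(\rho;\ldots)\in\mathcal{H}_{m-1}(\ldots,c)$, then for (i) one checks $(c+\epsilon;\mu-\epsilon)\in\mathcal{H}_2(a_s,a_t)$ and uses Lemma \ref{lem:SymmetricPerronLemma} to raise the glue entry of the $(m-1)$-block from $c$ to $c+\epsilon$ along with its Perron root; for (ii) one uses $(c+\epsilon;\mu+\epsilon)\in\mathcal{H}_2(a_s+\epsilon,a_t+\epsilon)$ together with the same application of Lemma \ref{lem:SymmetricPerronLemma}. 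This is essentially how the paper's proof proceeds: rather than forcing an $(m-1)+2$ split via Theorem \ref{thm:HSimplification}, it takes the two-block decomposition straight from Definition \ref{def:HDefinition}, splits into cases according to which block contains the perturbed eigenvalue, applies the inductive hypothesis to that block, and uses Lemma \ref{lem:SymmetricPerronLemma} to propagate the increment $\epsilon$ (on the Perron root or on the glue value) through the other block. Once you add the missing case and the appeal to Lemma \ref{lem:SymmetricPerronLemma}, your argument goes through.
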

\begin{rem}
	Since the $\lambda_i$ are unordered, $\lambda_2$ may take the place of any of the eigenvalues $\lambda_2,\lambda_3,\ldots,\lambda_n$.
\end{rem}
\begin{proof}[Proof of Theorem \ref{thm:GuoForSoules}]
	We first consider the case when $n=2$. Suppose $(\rho;\lambda_2)\in\mathcal{H}_2(a_1,a_2)$. Then $\rho\geq\max\{a_1,a_2\}$ and $\rho+\lambda_2=a_1+a_2$. Therefore $\rho+\epsilon\geq\max\{a_1,a_2\}$ and $(\rho+\epsilon)+(\lambda_2-\epsilon)=a_1+a_2$. Hence $(\rho+\epsilon;\lambda_2-\epsilon)\in\mathcal{H}_2(a_1,a_2)$. Similarly, $\rho+\epsilon\geq\max\{a_1+\epsilon,a_2+\epsilon\}$ and $(\rho+\epsilon)+(\lambda_2+\epsilon)=(a_1+\epsilon)+(a_2+\epsilon)$. Hence $(\rho+\epsilon;\lambda_2+\epsilon)\in\mathcal{H}_2(a_1+\epsilon,a_2+\epsilon)$.
	
	Now assume the statement holds for all $n\in\{2,3,\ldots,m-1\}$ and suppose $(\rho;\lambda_2,\lambda_3,\ldots,\lambda_m)\in\mathcal{H}_m(a_1,a_2,\ldots,a_m)$. Then there exist a partition of $\{2,3,\ldots,m\}$ into two subsets $\{\alpha_1,\alpha_2,\ldots,\alpha_{k}\}$ and $\{\beta_1,\beta_2,${ }$\ldots,\beta_{m-k-1}\}$, a partition of $\{1,2,\ldots,m\}$ into two subsets $\{\gamma_1,\gamma_2,\ldots,\gamma_k\}$ and $\{\delta_1,\delta_2,\ldots,${ }$\delta_{m-k}\}$ and a nonnegative number $c$ such that
	\begin{equation}\label{eq:GuoForSoules1}
		(\rho;\lambda_{\alpha_1},\lambda_{\alpha_2},\ldots,\lambda_{\alpha_k})\in\mathcal{H}_{k+1}(a_{\gamma_1},a_{\gamma_2},\ldots,a_{\gamma_k},c)
	\end{equation}
	and
	\begin{equation}\label{eq:GuoForSoules2}
		(c;\lambda_{\beta_1},\lambda_{\beta_2},\ldots,\lambda_{\beta_{m-k-1}})\in\mathcal{H}_{m-k}(a_{\delta_1},a_{\delta_2},\ldots,a_{\delta_{m-k}}).
	\end{equation}
	Assume also that the $\gamma_i$ and $\delta_i$ are labelled so that $\gamma_1\leq\gamma_2\leq\cdots\leq\gamma_k$ and $\delta_1\leq\delta_2\leq\cdots\leq\delta_{m-k}$. We must distinguish between two possible cases: $2\in\{\alpha_1,\alpha_2,\ldots,\alpha_k\}$ and $2\in\{\beta_1,\beta_2,\ldots,\beta_{m-k-1}\}$.
	
	\underline{\emph{Case 1:}} Suppose $2\in\{\alpha_1,\alpha_2,\ldots,\alpha_k\}$. Without loss of generality, assume $2=\alpha_1$. By (\ref{eq:GuoForSoules1}) and the inductive hypothesis,
	\begin{equation}\label{eq:GuoForSoules3}
		(\rho+\epsilon;\lambda_2-\epsilon,\lambda_{\alpha_2},\lambda_{\alpha_3},\ldots,\lambda_{\alpha_k})\in\mathcal{H}_{k+1}(a_{\gamma_1},a_{\gamma_2},\ldots,a_{\gamma_k},c)
	\end{equation}
	and hence by (\ref{eq:GuoForSoules3}) and (\ref{eq:GuoForSoules2}),
	\begin{equation}\label{eq:GuoForSoules4}
		(\rho+\epsilon;\lambda_2-\epsilon,\lambda_3,\lambda_4,\ldots,\lambda_m)\in\mathcal{H}_m(a_1,a_2,\ldots,a_m).
	\end{equation}
	This proves (i) in Case 1. The inductive hypothesis also guarantees that one of the following sub-cases holds:
	
	\underline{\emph{Case 1 (a):}} There exist $\hat{s},\hat{t}\in\{1,2,\ldots,k\}$, $\hat{s}<\hat{t}$, such that
	\begin{multline}\label{eq:GuoForSoules7}
		(\rho+\epsilon,\lambda_2+\epsilon,\lambda_{\alpha_2},\lambda_{\alpha_3},\ldots,\lambda_{\alpha_k})\in\mathcal{H}_{k+1} \\ (a_{\gamma_1},\ldots,a_{\gamma_{\hat{s}-1}},a_{\gamma_{\hat{s}}}+\epsilon,a_{\gamma_{\hat{s}+1}},\ldots,a_{\gamma_{\hat{t}-1}},a_{\gamma_{\hat{t}}}+\epsilon,a_{\gamma_{\hat{t}+1}},\ldots,a_{\gamma_k},c).
	\end{multline}
	In this case, by (\ref{eq:GuoForSoules7}) and (\ref{eq:GuoForSoules2}),
	\begin{multline}\label{eq:GuoForSoules8}
		(\rho+\epsilon,\lambda_2+\epsilon,\lambda_3,\lambda_4,\ldots,\lambda_m)\in\\ \mathcal{H}_m(a_1,\ldots,a_{s-1},a_s+\epsilon,a_{s+1},\ldots,a_{t-1},a_t+\epsilon,a_{t+1},\ldots,a_m),
	\end{multline}
	where $s=\gamma_{\hat{s}}$ and $t=\gamma_{\hat{t}}$. This proves (ii) in Case 1 (a).
	
	\underline{\emph{Case 1 (b):}} There exists $\hat{s}\in\{1,2,\ldots,k\}$, such that
	\begin{multline}\label{eq:GuoForSoules9}
		(\rho+\epsilon,\lambda_2+\epsilon,\lambda_{\alpha_2},\lambda_{\alpha_3},\ldots,\lambda_{\alpha_k})\in\\ \mathcal{H}_{k+1}(a_{\gamma_1},\ldots,a_{\gamma_{\hat{s}-1}},a_{\gamma_{\hat{s}}}+\epsilon,a_{\gamma_{\hat{s}+1}},\ldots,a_{\gamma_k},c+\epsilon).
	\end{multline}
	In this case, applying Lemma \ref{lem:SymmetricPerronLemma} to (\ref{eq:GuoForSoules2}), we have
	\begin{equation}\label{eq:GuoForSoules10}
		(c+\epsilon;\lambda_{\beta_1},\lambda_{\beta_2},\ldots,\lambda_{\beta_{m-k-1}})\in\mathcal{H}_{m-k}(a_{\delta_1}+\epsilon,a_{\delta_2},a_{\delta_3},\ldots,a_{\delta_{m-k}}).
	\end{equation}
	and hence (\ref{eq:GuoForSoules8}) follows from (\ref{eq:GuoForSoules9}) and (\ref{eq:GuoForSoules10}),
	where $s=\min\{\gamma_{\hat{s}},\delta_1\}$ and $t=\max\{\gamma_{\hat{s}},\delta_1\}$. This proves (ii) in Case 1 (b).
	
	\underline{\emph{Case 2:}} Suppose  $2\in\{\beta_1,\beta_2,\ldots,\beta_{m-k-1}\}$ and without loss of generality, assume $2=\beta_1$. By (\ref{eq:GuoForSoules1}) and Lemma \ref{lem:SymmetricPerronLemma},
	\begin{equation}\label{eq:GuoForSoules5}
		(\rho+\epsilon;\lambda_{\alpha_1},\lambda_{\alpha_2},\ldots,\lambda_{\alpha_k})\in\mathcal{H}_{k+1}(a_{\gamma_1},a_{\gamma_2},\ldots,a_{\gamma_k},c+\epsilon).
	\end{equation}
	Applying the inductive hypothesis to (\ref{eq:GuoForSoules2}),
	\begin{equation}\label{eq:GuoForSoules6}
		(c+\epsilon;\lambda_2-\epsilon,\lambda_{\beta_2},\lambda_{\beta_3},\ldots,\lambda_{\beta_{m-k-1}})\in\mathcal{H}_{m-k}(a_{\delta_1},a_{\delta_2},\ldots,a_{\delta_{m-k}}).
	\end{equation}
	and there exist $\hat{s},\hat{t}\in\{1,2,\ldots,m-k\}$, $\hat{s}<\hat{t}$, such that
	\begin{multline}\label{eq:GuoForSoules11}
		(c+\epsilon;\lambda_2+\epsilon,\lambda_{\beta_2},\lambda_{\beta_3},\ldots,\lambda_{\beta_{m-k-1}})\in\mathcal{H}_{m-k} \\ (a_{\delta_1},\ldots,a_{\delta_{\hat{s}-1}},a_{\delta_{\hat{s}}}+\epsilon,a_{\delta_{\hat{s}+1}},\ldots,a_{\delta_{\hat{t}-1}},a_{\delta_{\hat{t}}}+\epsilon,a_{\delta_{\hat{t}+1}},\ldots,a_{\delta_{m-k}}).
	\end{multline}
	Equation (\ref{eq:GuoForSoules4}) then follows from (\ref{eq:GuoForSoules5}) and (\ref{eq:GuoForSoules6}), which proves (i) in Case 2. Equation (\ref{eq:GuoForSoules8}) follows from (\ref{eq:GuoForSoules5}) and (\ref{eq:GuoForSoules11}),
	where $s=\delta_{\hat{s}}$ and $t=\delta_{\hat{t}}$. This proves (ii) in Case 2.
\end{proof}

In \cite{Guo}, Guo conjectured that the symmetric analogue of Theorem \ref{thm:Guo} holds, i.e. that if $\sigma:=(\rho,\lambda_2,\lambda_3,\ldots,\lambda_n)$ is symmetrically realisable, then $(\rho+\epsilon,\lambda_2\pm\epsilon,\lambda_3,\lambda_4,\ldots,\lambda_n)$ is symmetrically realisable also. Whether this conjecture is true remains an open question; however, Theorem \ref{thm:GuoForSoules} shows the conjecture holds when $\sigma\in\mathcal{H}_n$. In particular, it says that if $\sigma\in\mathcal{H}_n$, then we may always increase the spectral gap whilst preserving symmetric realisability. Next, we show that if $\sigma\in\mathcal{H}_n$, then it is also possible to \emph{decrease} the spectral gap in the following sense: If $(\lambda_1,\lambda_2,\ldots,\lambda_n)\in\mathcal{H}^*_n$, where $\lambda_1\geq\lambda_2\geq\cdots\geq\lambda_n$,  then there exists $0<\epsilon\leq\frac{1}{2}(\lambda_1-\lambda_2)$ such that $(\lambda_1-\epsilon,\lambda_2+\epsilon,\lambda_3,\lambda_4,\ldots,\lambda_n)\in\mathcal{H}_n\setminus\mathcal{H}^*_n$.

\begin{thm}\label{thm:AntiGuoForSoules}
	Let $\lambda_1\geq\lambda_2\geq\cdots\geq\lambda_n$ and $a_1,a_2,\ldots,a_n\geq0$. Then $(\lambda_1;\lambda_2,\ldots,\lambda_n)\in\mathcal{H}_n(a_1,a_2,\ldots,a_n)$ if and only if there exist
	\[
		0\leq\epsilon\leq\frac{1}{2}(\lambda_1-\lambda_2)
	\]
	and two partitions
	\begin{equation}\label{eq:SSPart}\def\arraystretch{1.3}
		\begin{array}{c}
			\{3,4,\ldots,n\}=\{p_1,p_2,\ldots,p_{l-1}\}\cup\{q_1,q_2,\ldots,q_{n-l-1}\}, \\
			\{1,2,\ldots,n\}=\{r_1,r_2,\ldots,r_l\}\cup\{s_1,s_2,\ldots,s_{n-l}\},
		\end{array}
	\end{equation}
	such that
	\begin{equation}\label{eq:SS1}
		(\lambda_1-\epsilon;\lambda_{p_1},\lambda_{p_2},\ldots,\lambda_{p_{l-1}})\in\mathcal{H}_l(a_{r_1},a_{r_2},\ldots,a_{r_l})
	\end{equation}
	and
	\begin{equation}\label{eq:SS2}
		(\lambda_2+\epsilon;\lambda_{q_1},\lambda_{q_2},\ldots,\lambda_{q_{n-l-1}})\in\mathcal{H}_{n-l}(a_{s_1},a_{s_2},\ldots,a_{s_{n-l}}).
	\end{equation}
	We allow the possibilities $l=1$, in which case $\{p_1,p_2,\ldots,p_{l-1}\}$ is the empty set, and $l=n-1$, in which case $\{q_1,q_2,\ldots,q_{n-l-1}\}$ is the empty set.
\end{thm}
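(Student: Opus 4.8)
The plan is to prove the two implications separately. The ``if'' direction is immediate from results already in hand, while the ``only if'' direction goes by induction on $n$, using Theorem \ref{thm:HSimplification} to peel off the smallest eigenvalue $\lambda_n$.

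For the ``if'' direction, suppose $\epsilon$ and the partitions (\ref{eq:SSPart}) satisfying (\ref{eq:SS1})--(\ref{eq:SS2}) are given. Since $\epsilon\le\frac12(\lambda_1-\lambda_2)$ we have $\lambda_1-\epsilon\ge\lambda_2+\epsilon$, so Observation \ref{obv:UnionForSoules} applied to (\ref{eq:SS1}) and (\ref{eq:SS2}) yields
\[
	(\lambda_1-\epsilon;\lambda_{p_1},\ldots,\lambda_{p_{l-1}},\lambda_2+\epsilon,\lambda_{q_1},\ldots,\lambda_{q_{n-l-1}})\in\mathcal{H}_n(a_1,a_2,\ldots,a_n).
\]
Because $\{p_1,\ldots,p_{l-1}\}\cup\{q_1,\ldots,q_{n-l-1}\}=\{3,4,\ldots,n\}$ and $\{r_1,\ldots,r_l\}\cup\{s_1,\ldots,s_{n-l}\}=\{1,\ldots,n\}$, the eigenvalue list here is $\lambda_1-\epsilon,\lambda_2+\epsilon,\lambda_3,\ldots,\lambda_n$ and the diagonal list is $(a_1,\ldots,a_n)$. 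Applying Theorem \ref{thm:GuoForSoules}(i) with the role of ``$\lambda_2$'' played by the entry $\lambda_2+\epsilon$ and perturbation $\epsilon$ then recovers $(\lambda_1;\lambda_2,\lambda_3,\ldots,\lambda_n)\in\mathcal{H}_n(a_1,a_2,\ldots,a_n)$, as required.

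For the ``only if'' direction I would induct on $n$. The base case $n=2$ is a direct computation: if $(\lambda_1;\lambda_2)\in\mathcal{H}_2(a_1,a_2)$ with, say, $a_1\ge a_2$, then $\lambda_1\ge a_1$ and $\lambda_1+\lambda_2=a_1+a_2$, so $\epsilon:=\lambda_1-a_1$ satisfies $0\le\epsilon\le\frac12(\lambda_1-\lambda_2)$ (the upper bound being equivalent to $a_2\le a_1$), while $\lambda_1-\epsilon=a_1$ and $\lambda_2+\epsilon=a_2$ give (\ref{eq:SS1})--(\ref{eq:SS2}) with $l=1$ and singleton partitions. For $n\ge3$, Theorem \ref{thm:HSimplification} supplies $s<t$ and a nonnegative $c=a_s+a_t-\lambda_n$ with $(\lambda_1;\lambda_2,\ldots,\lambda_{n-1})\in\mathcal{H}_{n-1}(b_1,\ldots,b_{n-1})$ --- where $(b_1,\ldots,b_{n-1})$ is $(a_1,\ldots,a_n)$ with $a_s,a_t$ removed and $c$ appended --- and $(c;\lambda_n)\in\mathcal{H}_2(a_s,a_t)$. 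Applying the inductive hypothesis to the first membership yields some $\epsilon\in[0,\frac12(\lambda_1-\lambda_2)]$ and a splitting of the $(n-1)$-list into a $(\lambda_1-\epsilon)$-piece and a $(\lambda_2+\epsilon)$-piece. The value $c$ lies among the diagonal elements of exactly one of these pieces; using $(c;\lambda_n)\in\mathcal{H}_2(a_s,a_t)$ and Definition \ref{def:HDefinition} I would glue $\lambda_n$ onto that piece, which restores the full diagonal multiset $(a_1,\ldots,a_n)$, adjoins $\lambda_n$ to that piece's interior eigenvalues, and leaves $\epsilon$ and the other piece untouched --- exactly the splitting required for $(\lambda_1;\lambda_2,\ldots,\lambda_n)$.

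The substantive content is packaged in Theorem \ref{thm:HSimplification}, Theorem \ref{thm:GuoForSoules} and Observation \ref{obv:UnionForSoules}, all already established, so I do not expect a genuine obstacle; the real work is the index bookkeeping. Specifically, in the inductive step one must handle both cases ``$c$ on the $\lambda_1-\epsilon$ side'' and ``$c$ on the $\lambda_2+\epsilon$ side'', and in each case relabel so that the interior-eigenvalue index sets reassemble to $\{3,\ldots,n\}$, the diagonal-index sets reassemble to a partition of $\{1,\ldots,n\}$ matching $(a_1,\ldots,a_n)$, and the constraint $\epsilon\le\frac12(\lambda_1-\lambda_2)$ --- which depends only on $\lambda_1,\lambda_2$ and is therefore unaffected by adjoining $\lambda_n$ --- is preserved. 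The one point needing a word of care is the degenerate situation in which $c$ is the unique diagonal element of its piece, so that piece is a length-one list with Perron value $c$ equal to $\lambda_1-\epsilon$ or $\lambda_2+\epsilon$; there the gluing step collapses to the given membership $(c;\lambda_n)\in\mathcal{H}_2(a_s,a_t)$ and no extra argument is needed.
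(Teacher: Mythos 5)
Your proposal is correct and follows essentially the same route as the paper: the ``if'' direction via Observation \ref{obv:UnionForSoules} followed by Theorem \ref{thm:GuoForSoules}(i), and the ``only if'' direction by induction on $n$, peeling off $\lambda_n$ with Theorem \ref{thm:HSimplification} and then gluing $(c;\lambda_n)\in\mathcal{H}_2(a_s,a_t)$ back onto whichever piece of the inductive splitting contains $c$, exactly as in the paper's two cases. The bookkeeping details you defer are precisely the relabelling the paper carries out, and your remark on the degenerate length-one piece is consistent with Definition \ref{def:HDefinition}.
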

\begin{proof}
	First suppose there exist $\epsilon\in[0,\frac{1}{2}(\lambda_1-\lambda_2)]$ and partitions of the form (\ref{eq:SSPart}) such that (\ref{eq:SS1}) and (\ref{eq:SS2}) hold. Then by Observation \ref{obv:UnionForSoules},
	\[
		(\lambda_1-\epsilon;\lambda_2+\epsilon,\lambda_3,\lambda_4\ldots,\lambda_n)\in\mathcal{H}_n(a_1,a_2,\ldots,a_n)
	\]
	and hence by Theorem \ref{thm:GuoForSoules}, $(\lambda_1;\lambda_2,\ldots,\lambda_n)\in\mathcal{H}_n(a_1,a_2,\ldots,a_n)$. We claim the converse holds also.

	If $n=2$, then for $\epsilon=\lambda_1-a_1$, we have $\lambda_1-\epsilon=a_1$ and $\lambda_2+\epsilon=a_2$. Hence $(\lambda_1-\epsilon)\in\mathcal{H}_1(a_1)$ and $(\lambda_2+\epsilon)\in\mathcal{H}_1(a_2)$ and so the claim holds in this case. Now assume the claim holds for $n=m-1$ and consider the case when $n=m$.
	
	Suppose $(\lambda_1;\lambda_2,\ldots,\lambda_m)\in\mathcal{H}_m(a_1,a_2,\ldots,a_m)$. Then by Theorem \ref{thm:HSimplification}, there exist $s,t\in\{1,2,\ldots,m\}$, $s<t$, such that
	\begin{multline}\label{eq:SpectralGapReduction1}
		(\lambda_1;\lambda_2,\ldots,\lambda_{m-1})\in \\ \mathcal{H}_{m-1}(a_1,\ldots,a_{s-1},a_{s+1},\ldots,a_{t-1},a_{t+1},\ldots,a_m,c)
	\end{multline}
	and
	\begin{equation}\label{eq:SpectralGapReduction2}
		(c;\lambda_m)\in\mathcal{H}_2(a_s,a_t),
	\end{equation}
	where $c:=a_s+a_t-\lambda_m$. Let us now apply the inductive hypothesis to (\ref{eq:SpectralGapReduction1}); we will need to distinguish between the two possible cases $c\in\{r_1,r_2,\ldots,r_l\}$ and $c\in\{s_1,s_2,\ldots,s_{m-l-1}\}$.
	
	\underline{\emph{Case 1:}} There exist $\epsilon\in[0,\frac{1}{2}(\lambda_1-\lambda_2)]$ and two partitions
	\begin{gather*}
		\{3,4,\ldots,m-1\}=\{\hat{p}_1,\hat{p}_2,\ldots,\hat{p}_{\hat{l}-1}\}\cup\{q_1,q_2,\ldots,q_{m-\hat{l}-2}\}, \\
		\{1,2,\ldots,m\}\setminus\{s,t\}=\{\hat{r}_1,\hat{r}_2,\ldots,\hat{r}_{\hat{l}-1}\}\cup\{s_1,s_2,\ldots,s_{m-\hat{l}-1}\},
	\end{gather*}
	such that
	\begin{equation}\label{eq:SpectralGapReduction3}
		(\lambda_1-\epsilon;\lambda_{\hat{p}_1},\lambda_{\hat{p}_2},\ldots,\lambda_{\hat{p}_{\hat{l}-1}})\in\mathcal{H}_{\hat{l}}(a_{\hat{r}_1},a_{\hat{r}_2},\ldots,a_{\hat{r}_{\hat{l}-1}},c)
	\end{equation}
	and
	\begin{equation}\label{eq:SpectralGapReduction4}
		(\lambda_2+\epsilon;\lambda_{q_1},\lambda_{q_2},\ldots,\lambda_{q_{m-\hat{l}-2}})\in\mathcal{H}_{m-\hat{l}-1}(a_{s_1},a_{s_2},\ldots,a_{s_{m-\hat{l}-1}}).
	\end{equation}
	In this case, by Definition \ref{def:HDefinition}, (\ref{eq:SpectralGapReduction3}) and (\ref{eq:SpectralGapReduction2}) imply
	\begin{equation}\label{eq:SpectralGapReduction5}
		(\lambda_1-\epsilon;\lambda_{\hat{p}_1},\lambda_{\hat{p}_2},\ldots,\lambda_{\hat{p}_{\hat{l}-1}},\lambda_m)\in\mathcal{H}_{\hat{l}+1}(a_{\hat{r}_1},a_{\hat{r}_2},\ldots,a_{\hat{r}_{\hat{l}-1}},a_s,a_t)
	\end{equation}
	and after some relabelling, (\ref{eq:SpectralGapReduction5}) and (\ref{eq:SpectralGapReduction4}) become
	\[
		(\lambda_1-\epsilon;\lambda_{p_1},\lambda_{p_2},\ldots,\lambda_{p_{l-1}})\in\mathcal{H}_l(a_{r_1},a_{r_2},\ldots,a_{r_l})
	\]
	and
	\[
		(\lambda_2+\epsilon;\lambda_{q_1},\lambda_{q_2},\ldots,\lambda_{q_{m-l-1}})\in\mathcal{H}_{m-l}(a_{s_1},a_{s_2},\ldots,a_{s_{m-l}}),
	\]
	respectively. This completes the inductive step and establishes the claim in Case 1.
	
	\underline{\emph{Case 2:}} There exist $\epsilon\in[0,\frac{1}{2}(\lambda_1-\lambda_2)]$ and two partitions
	\begin{gather*}
		\{3,4,\ldots,m-1\}=\{p_1,p_2,\ldots,p_{l-1}\}\cup\{\hat{q}_1,\hat{q}_2,\ldots,\hat{q}_{m-l-2}\}, \\
		\{1,2,\ldots,m\}\setminus\{s,t\}=\{r_1,r_2,\ldots,r_l\}\cup\{\hat{s}_1,\hat{s}_2,\ldots,\hat{s}_{m-l-2}\},
	\end{gather*}
	such that
	\[
		(\lambda_1-\epsilon;\lambda_{p_1},\lambda_{p_2},\ldots,\lambda_{p_{l-1}})\in\mathcal{H}_l(a_{r_1},a_{r_2},\ldots,a_{r_l})
	\]
	and
	\begin{equation}\label{eq:SpectralGapReduction6}
		(\lambda_2+\epsilon;\lambda_{\hat{q}_1},\lambda_{\hat{q}_2},\ldots,\lambda_{\hat{q}_{m-l-2}})\in\mathcal{H}_{m-l-1}(a_{\hat{s}_1},a_{\hat{s}_2},\ldots,a_{\hat{s}_{m-l-2}},c).
	\end{equation}
	In this case, we may apply Definition \ref{def:HDefinition} to (\ref{eq:SpectralGapReduction6}) and (\ref{eq:SpectralGapReduction2}) and the remainder of the proof is analogous to Case 1.
\end{proof}

We finish this section by proving a property of $\mathcal{H}_n$ which will not have an application in proving our main equivalence result, but which is of independent interest. The effect of adding zeros to a list $\sigma$ has been extensively studied in the NIEP (see, for example, \cite{BoyleHandelman} and \cite{LaffeyConstructive}). The effect of adding zeros has also been studied in the SNIEP (see \cite{RealSymmetricDifferent}). Our next result shows that adding zeros to $\sigma$ does not affect whether $\sigma\in\mathcal{H}_n$:

\begin{thm}\label{thm:ZerosDon'tHelp}
	If
	\[
		(\lambda_1;\lambda_2,\ldots,\lambda_n,0)\in\mathcal{H}_{n+1}(a_1,a_2,\ldots,a_{n+1}),
	\]
	then there exist $s,t\in\{1,2,\ldots,n+1\}$, $s<t$, such that
	\begin{multline*}
		(\lambda_1;\lambda_2,\ldots,\lambda_n)\in \\ \mathcal{H}_n(a_1,\ldots,a_{s-1},a_{s+1},\ldots,a_{t-1},a_{t+1},\ldots,a_{n+1},a_s+a_t).
	\end{multline*}
	In particular, if $(\lambda_1,\lambda_2,\ldots,\lambda_n,0)\in\mathcal{H}_{n+1}$, then $(\lambda_1,\lambda_2,\ldots,\lambda_n)\in\mathcal{H}_n$.
\end{thm}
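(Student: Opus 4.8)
The plan is to prove the first (non-``in particular'') assertion by induction on $n$, using Theorem~\ref{thm:HSimplification} as the engine that strips one eigenvalue off the list at a time. Since the non-Perron eigenvalues of a member of $\mathcal{H}$ are listed unordered by convention, and $\lambda_1$ is a Perron eigenvalue of a nonnegative symmetric matrix (so $\lambda_1\geq\lambda_i$ for every $i$, by Lemma~\ref{lem:SmigocSDLemma}), I may assume $\lambda_2\geq\lambda_3\geq\cdots\geq\lambda_n$ and hence sort the full list $(\lambda_1,\lambda_2,\ldots,\lambda_n,0)$ into non-increasing order before invoking Theorem~\ref{thm:HSimplification}. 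The base case $n=1$ is immediate: $(\lambda_1;0)\in\mathcal{H}_2(a_1,a_2)$ forces $\lambda_1=a_1+a_2$ straight from the definition of $\mathcal{H}_2$, so $(\lambda_1)\in\mathcal{H}_1(a_1+a_2)$, which is the claim with $s=1$, $t=2$.

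For the inductive step ($n\geq2$) I split on the sign of $\lambda_n$. If $\lambda_n\geq0$, then the appended $0$ is the minimum of the sorted list, so applying Theorem~\ref{thm:HSimplification} with its role of ``$\lambda_n$'' played by this trailing $0$ yields $s<t$ with $(\lambda_1;\lambda_2,\ldots,\lambda_n)\in\mathcal{H}_n$ on the diagonal obtained from $(a_1,\ldots,a_{n+1})$ by deleting $a_s,a_t$ and appending $c:=a_s+a_t-0=a_s+a_t$; this is exactly the desired conclusion, and the inductive hypothesis is not even needed. If $\lambda_n<0$, then $\lambda_n$ is the minimum of the sorted list, and Theorem~\ref{thm:HSimplification} produces $u<v$ such that, with $c:=a_u+a_v-\lambda_n\ (\geq0)$, we have $(\lambda_1;\lambda_2,\ldots,\lambda_{n-1},0)\in\mathcal{H}_n$ on the diagonal $\{a_i:i\notin\{u,v\}\}\cup\{c\}$, together with $(c;\lambda_n)\in\mathcal{H}_2(a_u,a_v)$. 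The first list has length $n$ and still contains a $0$ among its non-Perron eigenvalues, so the inductive hypothesis applies to it: there is a pair among its $n$ diagonal entries whose replacement by their sum leaves a list in $\mathcal{H}_{n-1}$ realising $(\lambda_1;\lambda_2,\ldots,\lambda_{n-1})$. It then remains to re-attach the $2\times2$ block $(c;\lambda_n)\in\mathcal{H}_2(a_u,a_v)$ via Definition~\ref{def:HDefinition} and read off $s,t$.

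The recombination splits into two sub-cases. If the pair merged by the inductive hypothesis consists of two ``old'' entries $a_p,a_q$ with $p,q\notin\{u,v\}$, then $c$ survives untouched, and re-attaching $(c;\lambda_n)\in\mathcal{H}_2(a_u,a_v)$ in its place gives $(\lambda_1;\lambda_2,\ldots,\lambda_n)\in\mathcal{H}_n$ with $a_p,a_q$ replaced by $a_p+a_q$ --- the claim with $\{s,t\}=\{p,q\}$. If instead the merged pair is $c$ together with some old entry $a_p$, a direct re-attachment fails, and this is the one delicate point of the argument: I first apply Lemma~\ref{lem:SymmetricPerronLemma} to the block $(c;\lambda_n)\in\mathcal{H}_2(a_u,a_v)$, increasing the Perron root $c$ and the diagonal entry $a_u$ each by $a_p$, to obtain $(c+a_p;\lambda_n)\in\mathcal{H}_2(a_u+a_p,a_v)$; re-attaching \emph{this} block in place of the merged entry $a_p+c$ then yields $(\lambda_1;\lambda_2,\ldots,\lambda_n)\in\mathcal{H}_n$ with $a_u,a_p$ replaced by $a_u+a_p$ --- the claim with $\{s,t\}=\{u,p\}$. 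The ``in particular'' statement follows at once, since $\mathcal{H}_n(b_1,\ldots,b_n)\subseteq\mathcal{H}_n$. Apart from that single use of the Perron-shift Lemma, the whole argument is the same bookkeeping of partitions found in the proofs of Theorems~\ref{thm:HSimplification} and~\ref{thm:GuoForSoules}, so I expect no further obstacle.
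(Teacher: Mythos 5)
Your proof is correct, and it takes a recognisably different (and somewhat leaner) route than the paper's. The paper works directly from Definition~\ref{def:HDefinition}: it splits the length-$(n+1)$ list into the two arbitrary sub-lists guaranteed by the definition, tracks whether the appended $0$ lands in the first or the second block, and handles the resulting cases (including the degenerate empty-$\alpha$ block) by applying the inductive hypothesis to whichever block contains the $0$. You instead invoke Theorem~\ref{thm:HSimplification} first, which normalises the decomposition to ``peel off the minimal eigenvalue''; this collapses the case analysis to a clean dichotomy --- either the $0$ is the minimum (in which case peeling it off \emph{is} the conclusion and no induction is needed), or the genuine minimum $\lambda_n<0$ is peeled off and the $0$ survives into the shorter list to which the inductive hypothesis applies. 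Both arguments then meet at the same delicate point: when the pair merged by the inductive hypothesis absorbs the linking value $c$, one must shift the Perron root of the detached $2\times2$ block via Lemma~\ref{lem:SymmetricPerronLemma} before reattaching, exactly as you do (this is the paper's Case~2(b)). What your approach buys is economy --- Theorem~\ref{thm:HSimplification} has already done the work of taming the arbitrary bipartitions of Definition~\ref{def:HDefinition}, so you avoid the paper's Case~1/Case~2 split over sub-list membership; the small price is that you must justify sorting the list (legitimate, since the non-Perron entries are unordered by convention and $\lambda_1$ dominates as the Perron root) and quietly handle the degenerate reattachment through $\mathcal{H}_1$ when $n=2$, both of which you do or could do in a line.
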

\begin{proof}
	If $n=1$, then the conclusion follows trivially. Now suppose $n=2$ and let
	\[
		(\lambda_1;\lambda_2,0)\in\mathcal{H}_3(a_1,a_2,a_3),
	\]
	where $a_1\geq a_2\geq a_3\geq0$. Then from the conditions given in (\ref{eq:n=3NS}),
	\begin{gather}
		\lambda_1\geq a_1,\label{eq:ZeroProp1}\\
		\lambda_2\leq a_1,\label{eq:ZeroProp2}
	\end{gather}
	and
	\begin{equation}\label{eq:ZeroProp3}
		\lambda_1+\lambda_2=a_1+a_2+a_3.
	\end{equation}
	Combining (\ref{eq:ZeroProp2}) and (\ref{eq:ZeroProp3}),
	\begin{equation}\label{eq:ZeroProp4}
		\lambda_1\geq a_2+a_3
	\end{equation}
	and hence from (\ref{eq:ZeroProp1}), (\ref{eq:ZeroProp4}) and (\ref{eq:ZeroProp3}), we see that
	\[
		(\lambda_1;\lambda_2)\in\mathcal{H}_2(a_1,a_2+a_3).
	\]
	
	Now assume the assertion holds for all $n\in\{1,2,\ldots,m-1\}$ and consider the case when $n=m$. If
	\[
		(\lambda_1;\lambda_2,\ldots,\lambda_m,0)\in\mathcal{H}_{m+1}(a_1,a_2,\ldots,a_{m+1}),
	\]
	then by Definition \ref{def:HDefinition}, one of the following cases must hold:
	
	\underline{\emph{Case 1:}} There exist a partition of $\{2,3,\ldots,m\}$ into two subsets $\{\alpha_1,${ }$\alpha_2,\ldots,\alpha_{k}\}$ and $\{\beta_1,\beta_2,\ldots,\beta_{m-k-1}\}$, a partition of $\{1,2,\ldots,m+1\}$ into two subsets $\{\gamma_1,\gamma_2,\ldots,\gamma_k\}$ and $\{\delta_1,\delta_2,\ldots,\delta_{m-k+1}\}$ and a nonnegative number $c$, such that
	\begin{equation}\label{eq:ZeroProp5}
		(\lambda_1;\lambda_{\alpha_1},\lambda_{\alpha_2},\ldots,\lambda_{\alpha_k})\in\mathcal{H}_{k+1}(a_{\gamma_1},a_{\gamma_2},\ldots,a_{\gamma_k},c)
	\end{equation}
	and
	\begin{equation}\label{eq:ZeroProp6}
		(c;\lambda_{\beta_1},\lambda_{\beta_2},\ldots,\lambda_{\beta_{m-k-1}},0)\in\mathcal{H}_{m-k+1}(a_{\delta_1},a_{\delta_2},\ldots,a_{\delta_{m-k+1}}).
	\end{equation}
	In this case, applying the inductive hypothesis to (\ref{eq:ZeroProp6}), we see that there exist $\hat{s},\hat{t}\in\{1,2,\ldots,m-k+1\}$, $\hat{s}<\hat{t}$, such that
	\begin{multline}\label{eq:ZeroProp7}
		(c;\lambda_{\beta_1},\lambda_{\beta_2},\ldots,\lambda_{\beta_{m-k-1}})\in \\ \mathcal{H}_{m-k}(a_{\delta_1},\ldots,a_{\delta_{\hat{s}-1}},a_{\delta_{\hat{s}+1}},\ldots,a_{\delta_{\hat{t}-1}},a_{\delta_{\hat{t}+1}},\ldots,a_{\delta_{m-k+1}},a_{\delta_{\hat{s}}}+a_{\delta_{\hat{t}}})
	\end{multline}
	and hence, assuming the $\delta_i$ are labelled so that $\delta_1<\delta_2<\cdots<\delta_{m-k+1}$, (\ref{eq:ZeroProp5}), (\ref{eq:ZeroProp7}) and Definition \ref{def:HDefinition} imply
	\begin{multline}\label{eq:ZerosDon'tHelpConclusion}
		(\lambda_1;\lambda_2,\ldots,\lambda_m)\in \\ \mathcal{H}_n(a_1,\ldots,a_{s-1},a_{s+1},\ldots,a_{t-1},a_{t+1},\ldots,a_{n+1},a_s+a_t),
	\end{multline}
	where $s=\delta_{\hat{s}}$ and $t=\delta_{\hat{t}}$. This completes the inductive step in Case 1.
	
	\underline{\emph{Case 2:}} There exist a partition of $\{2,3,\ldots,m\}$ into two subsets $\{\alpha_1,${ }$\alpha_2,\ldots,\alpha_{k}\}$ (which may be empty) and $\{\beta_1,\beta_2,\ldots,\beta_{m-k-1}\}$, a partition of $\{1,2,\ldots,m+1\}$ into two subsets $\{\gamma_1,\gamma_2,\ldots,\gamma_{k+1}\}$ and $\{\delta_1,\delta_2,${ }$\ldots,\delta_{m-k}\}$ and a nonnegative number $c$, such that
	\begin{equation}\label{eq:ZeroProp8}
		(\lambda_1;\lambda_{\alpha_1},\lambda_{\alpha_2},\ldots,\lambda_{\alpha_k},0)\in\mathcal{H}_{k+2}(a_{\gamma_1},a_{\gamma_2},\ldots,a_{\gamma_{k+1}},c)
	\end{equation}
	and
	\begin{equation}\label{eq:ZeroProp9}
		(c;\lambda_{\beta_1},\lambda_{\beta_2},\ldots,\lambda_{\beta_{m-k-1}})\in\mathcal{H}_{m-k}(a_{\delta_1},a_{\delta_2},\ldots,a_{\delta_{m-k}}).
	\end{equation}
	If $\{\alpha_1,\alpha_2,\ldots,\alpha_{k}\}$ is empty, then (\ref{eq:ZeroProp8}) reduces to $\lambda_1=a_{\gamma_1}+c$ and so, applying Lemma \ref{lem:SymmetricPerronLemma} to (\ref{eq:ZeroProp9}) with $\epsilon=a_{\gamma_1}$, gives
	\[
		(\lambda_1;\lambda_{\beta_1},\lambda_{\beta_2},\ldots,\lambda_{\beta_{m-k-1}})\in\mathcal{H}_{m-k}(a_{\delta_1}+a_{\gamma_1},a_{\delta_2},a_{\delta_3},\ldots,a_{\delta_{m-k}}),
	\]
	i.e. (\ref{eq:ZerosDon'tHelpConclusion}) holds with $s=\min\{\delta_1,\gamma_1\}$ and $t=\max\{\delta_1,\gamma_1\}$. If $\{\alpha_1,\alpha_2,\ldots,${ }$\alpha_{k}\}$ is nonempty, then applying the inductive hypothesis to (\ref{eq:ZeroProp8}) yields one of two possible sub-cases:
	
	\underline{\emph{Case 2 (a):}} There exist $\hat{s},\hat{t}\in\{1,2,\ldots,k+1\}$, $\hat{s}<\hat{t}$, such that
	\begin{multline}\label{eq:ZeroProp10}
		(\lambda_1;\lambda_{\alpha_1},\lambda_{\alpha_2},\ldots,\lambda_{\alpha_k})\in \\ \mathcal{H}_{k+1}(a_{\gamma_1},\ldots,a_{\gamma_{\hat{s}-1}},a_{\gamma_{\hat{s}+1}},\ldots,a_{\gamma_{\hat{t}-1}},a_{\gamma_{\hat{t}+1}},\ldots,a_{\gamma_{k+1}},a_{\gamma_{\hat{s}}}+a_{\gamma_{\hat{t}}},c)
	\end{multline}
	Then, assuming the $\gamma_i$ are labelled so that $\gamma_1<\gamma_2<\cdots<\gamma_{k+1}$, (\ref{eq:ZerosDon'tHelpConclusion}) holds by (\ref{eq:ZeroProp10}) and (\ref{eq:ZeroProp9}), with $s=\gamma_{\hat{s}}$ and $t=\gamma_{\hat{t}}$.
	
	\underline{\emph{Case 2 (b):}} There exists $\hat{r}\in\{1,2,\ldots,k+1\}$, such that
	\begin{multline}\label{eq:ZeroProp11}
		(\lambda_1;\lambda_{\alpha_1},\lambda_{\alpha_2},\ldots,\lambda_{\alpha_k})\in \\ \mathcal{H}_{k+1}(a_{\gamma_1},\ldots,a_{\gamma_{\hat{r}-1}},a_{\gamma_{\hat{r}+1}},\ldots,a_{\gamma_{k+1}},c+a_{\gamma_{\hat{r}}}).
	\end{multline}
	Then by applying Lemma \ref{lem:SymmetricPerronLemma} to (\ref{eq:ZeroProp9}) with $\epsilon=a_{\gamma_{\hat{r}}}$, we have that
	\begin{equation}\label{eq:ZeroProp12}
		(c+a_{\gamma_{\hat{r}}};\lambda_{\beta_1},\lambda_{\beta_2},\ldots,\lambda_{\beta_{m-k-1}})\in\mathcal{H}_{m-k}(a_{\delta_1}+a_{\gamma_{\hat{r}}},a_{\delta_2},a_{\delta_3},\ldots,a_{\delta_{m-k}})
	\end{equation}
	and hence (\ref{eq:ZeroProp11}) and (\ref{eq:ZeroProp12}) imply (\ref{eq:ZerosDon'tHelpConclusion}), where $s=\min\{\delta_1,\gamma_{\hat{r}}\}$ and $t=\max\{\delta_1,${ }$\gamma_{\hat{r}}\}$. This completes the inductive step in Case 2 and finishes the proof.
\end{proof}

\section{Main (equivalence) result}\label{sec:Equivalence}

We are now ready to prove the main result of this paper.

\begin{thm}\label{thm:SoulesEquivalence}
	The following are equivalent:
	\begin{enumerate}
		\item[\textup{(i)}] $(\lambda_1,\lambda_2,\ldots,\lambda_n)\in\overline{\mathcal{S}}_n$;
		\item[\textup{(ii)}] $(\lambda_1,\lambda_2,\ldots,\lambda_n)\in\mathcal{H}_n$;
		\item[\textup{(iii)}] $(\lambda_1,\lambda_2,\ldots,\lambda_n)$ is C-realisable;
		\item[\textup{(iv)}] $(\lambda_1,\lambda_2,\ldots,\lambda_n)$ satisfies $\mathbb{S}_p$ for some $p$.
	\end{enumerate}
	Furthermore, if $a_1,a_2,\ldots,a_n\geq0$ are given, then $(\lambda_1;\lambda_2,\ldots,\lambda_n)\in\overline{\mathcal{S}}_n(a_1,a_2,${ }$\ldots,a_n)$ if and only if $(\lambda_1;\lambda_2,\ldots,\lambda_n)\in\mathcal{H}_n(a_1,a_2,\ldots,a_n)$.
\end{thm}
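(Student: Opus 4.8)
The plan is to prove the four-way equivalence by establishing a cycle of implications, using the refined form of each property (with specified diagonal elements) wherever possible, since the "furthermore" clause requires tracking the $a_i$. I would organize the argument as (ii) $\Rightarrow$ (iii) $\Rightarrow$ (iv) $\Rightarrow$ (i) $\Rightarrow$ (ii), with the last two implications carrying the diagonal-element refinement.

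For (ii) $\Rightarrow$ (iii): I would induct on $n$, using Theorem \ref{thm:AntiGuoForSoules}. If $(\lambda_1,\ldots,\lambda_n)\in\mathcal{H}_n$ with $\lambda_1\geq\cdots\geq\lambda_n$, then that theorem splits the list (after decreasing the spectral gap by some $\epsilon$) into two shorter $\mathcal{H}$-lists on disjoint index sets; by induction each is C-realisable, so by Observation \ref{obv:Union} their union $(\lambda_1-\epsilon,\lambda_2+\epsilon,\lambda_3,\ldots,\lambda_n)$ is C-realisable, and then one application of Guo's Theorem \ref{thm:Guo} (shifting $\epsilon$ back onto the Perron root and off $\lambda_2$, using $\epsilon\leq\frac12(\lambda_1-\lambda_2)$ to keep $\lambda_1-\epsilon$ the largest) recovers $(\lambda_1,\ldots,\lambda_n)$. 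For (iii) $\Rightarrow$ (iv): this is essentially a result of Soto; I would note that the $\mathbb{S}_p$ family is built exactly to absorb the operations Union, Perron-increase, and Guo-shift (compare \eqref{eq:SotoPartition}--\eqref{eq:Soto2} with the definition of C-realisability), and cite \cite{Soto2013}, or else give a short induction on the number of construction steps showing each C-realisable list satisfies some $\mathbb{S}_p$.

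For (iv) $\Rightarrow$ (i) and (i) $\Rightarrow$ (ii), and the diagonal refinement, the work is: first show $\overline{\mathcal{S}}_n(a_1,\ldots,a_n)\subseteq\mathcal{H}_n(a_1,\ldots,a_n)$ and conversely. The inclusion $\mathcal{H}\subseteq\overline{\mathcal{S}}$ I would prove by induction using Theorem \ref{thm:HSimplification}: an $\mathcal{H}_n$-list peels off a length-$2$ block $(c;\lambda_n)\in\mathcal{H}_2(a_s,a_t)$, leaving an $\mathcal{H}_{n-1}$-list which by induction is in $\overline{\mathcal{S}}_{n-1}$, hence is a union of $\mathcal{S}$-pieces; since $\mathcal{H}_2=\mathcal{S}_2$ on irreducible length-$2$ lists, reattaching the block via Lemma \ref{lem:SmigocSDLemma} keeps the eigenvector structure of Theorem \ref{thm:SoulesMatricesCharacterised} — one checks the combined partition sequence is still Soules-type — so the result lies in $\overline{\mathcal{S}}_n$. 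The reverse inclusion $\overline{\mathcal{S}}_n\subseteq\mathcal{H}_n$ reduces (by Observation \ref{obv:UnionForSoules}, which shows $\mathcal{H}$ is closed under union) to showing $\mathcal{S}_n(a_1,\ldots,a_n)\subseteq\mathcal{H}_n(a_1,\ldots,a_n)$ for irreducible lists: given a Soules matrix $R$ with its Soules-type partition sequence $\mathcal{N}$, the last split of $\mathcal{N}$ (going from $\mathcal{N}_{n-1}$ to $\mathcal{N}_n$, or rather the first split from $\mathcal{N}_1$) identifies a decomposition of $\sigma$ matching Definition \ref{def:HDefinition}, and one induces on $n$. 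Finally (iv) $\Rightarrow$ (i): I would show each $\mathbb{S}_p$-list is in $\overline{\mathcal{S}}$ by induction on $p$, checking that $\mathbb{S}_1$-lists are Soules-realisable (these are the Suleimanova/Perfect-type lists, realisable by the classical Soules construction from a suitable positive vector) and that the recursive clause \eqref{eq:Soto1}--\eqref{eq:Soto2} combining $\mathbb{S}_{p-1}$-pieces is exactly reproduced by Observations \ref{obv:UnionForSoules}, Lemma \ref{lem:SymmetricPerronLemma} (Perron shift) and Theorem \ref{thm:GuoForSoules} (Guo shift) inside $\mathcal{H}$, which we have already identified with $\overline{\mathcal{S}}$.

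The main obstacle will be the careful bookkeeping in $\mathcal{S}_n(a_1,\ldots,a_n)\subseteq\mathcal{H}_n(a_1,\ldots,a_n)$: one must show that the diagonal entries of $R\Lambda R^T$ are distributed across the two sub-blocks of the $\mathcal{H}$-decomposition in a way consistent with Definition \ref{def:HDefinition}, and that the "glue" eigenvalue $c$ produced by the Soules split is nonnegative and equals the Perron value of the corresponding sub-block — this is where the structure of \eqref{eq:SoulesColumns} and the Soules-type conditions (i)--(ii) must be used rather than just the abstract nonnegativity of $R\Lambda R^T$. The other delicate point is ensuring, in (ii) $\Rightarrow$ (iii), that after the Guo step the Perron eigenvalue is genuinely largest so that the ordered-list hypotheses of the cited theorems apply; the bound $\epsilon\leq\frac12(\lambda_1-\lambda_2)$ from Theorem \ref{thm:AntiGuoForSoules} is exactly what makes this work, so I would flag its use explicitly.
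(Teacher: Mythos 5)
Much of your architecture matches the paper's: the identification $\overline{\mathcal{S}}_n(a_1,\ldots,a_n)=\mathcal{H}_n(a_1,\ldots,a_n)$ via the two inclusions $\mathcal{S}_n\subseteq\mathcal{H}_n$ (splitting at the first refinement $\mathcal{N}_1\to\mathcal{N}_2$ of the Soules-type sequence) and $\mathcal{H}^*_n\subseteq\mathcal{S}_n$ (peeling off a length-two block via Theorem \ref{thm:HSimplification} and regluing with Lemma \ref{lem:SmigocSDLemma}) is exactly Parts 1--3 of the paper, and you correctly flag the diagonal bookkeeping and the nonnegativity of the glue value $c$ as the delicate points. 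Your direct proof of (ii)$\Rightarrow$(iii) from Theorem \ref{thm:AntiGuoForSoules} plus Observation \ref{obv:Union} and one application of Theorem \ref{thm:Guo} is valid and is a genuine shortcut: the paper instead obtains this implication by composing (ii)$\Rightarrow$(iv)$\Rightarrow$(iii). One smaller caveat: in $\mathcal{H}\subseteq\overline{\mathcal{S}}$ you must work with the irreducible class $\mathcal{H}^*$ throughout the induction, since the peeled-off block $(c;\lambda_n)\in\mathcal{H}_2(a_s,a_t)$ can be reducible (i.e.\ $c=a_s$, $\lambda_n=a_t$), in which case there is no $2\times2$ Soules matrix with positive first column realising it; the paper's Part 2 shows that for an $\mathcal{H}^*_n$ list both pieces of the decomposition can be taken irreducible, and this argument is needed, not just the remark that $\mathcal{H}_2=\mathcal{S}_2$ on irreducible pairs.

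The genuine gap is (iii)$\Rightarrow$(iv), on which your entire cycle closes. This implication is not ``essentially a result of Soto'': \cite{Soto2013} proves that $\mathbb{S}_p$ implies symmetric realisability, and \cite{UnifiedView} proves results in the direction $\mathbb{S}_p\Rightarrow$ C-realisable (the paper's Part 5 cites exactly this for (iv)$\Rightarrow$(iii)); the converse ``C-realisable $\Rightarrow\mathbb{S}_p$'' is one of the new contributions of the present theorem, so citing it is circular. Nor is the fallback ``short induction on construction steps'' short: it requires showing that $\bigcup_p\mathcal{Q}_{\mathbb{S}_p}$ is closed under a Guo perturbation $(\rho+\epsilon,\lambda_j\pm\epsilon)$ applied to an \emph{arbitrary} real eigenvalue $\lambda_j$, whereas the recursion \eqref{eq:SotoPartition}--\eqref{eq:Soto2} only budgets, via $\mathcal{M}_{\mathbb{S}_{p-1}}(\sigma_1)$ and $\mathcal{N}_{\mathbb{S}_{p-1}}(\sigma_i)$, for transfers between the Perron roots of the blocks of the partition; absorbing a shift on an interior eigenvalue of some $\sigma_i$ would itself require the Guo-closure of $\mathbb{S}_{p-1}$, making the induction circular without further structure. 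The paper avoids this precisely by routing through $\mathcal{H}$: (iii)$\Rightarrow$(ii) is immediate from Observation \ref{obv:UnionForSoules}, Lemma \ref{lem:SymmetricPerronLemma} and Theorem \ref{thm:GuoForSoules}, and then (ii)$\Rightarrow$(iv) follows because Theorem \ref{thm:AntiGuoForSoules} exhibits an $\mathcal{H}_n$ list as a union of two shorter $\mathcal{H}$ lists followed by a \emph{single Perron-to-Perron} Guo shift of size $\epsilon\leq\frac12(\lambda_1-\lambda_2)$, which is exactly what the estimates $\mathcal{M}_{\mathbb{S}_k}(\sigma_1)\geq\epsilon$ and $\mathcal{N}_{\mathbb{S}_k}(\sigma_2)\leq\epsilon$ in the definition of $\mathbb{S}_{k+1}$ can absorb. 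Replace your arrow (iii)$\Rightarrow$(iv) by (iii)$\Rightarrow$(ii) and add (ii)$\Rightarrow$(iv) along these lines, and the cycle closes.
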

\begin{proof}
	The proof is divided into six parts: In part 1, we show that $\mathcal{S}_n(a_1,a_2,${ }$\ldots,a_n)\subseteq\mathcal{H}_n(a_1,a_2,\ldots,a_n)$. In Part 2, we show that $\mathcal{H}^*_n(${ }$a_1,a_2,\ldots,a_n)\subseteq\mathcal{S}_n(a_1,a_2,\ldots,a_n)$. In Part 3, we use parts 1 and 2 to show that $\overline{\mathcal{S}}_n(a_1,a_2,\ldots,${ }$a_n)=\mathcal{H}_n(a_1,a_2,\ldots,a_n)$. In Part 4, we show that (ii) implies (iv). In Part 5, we show that (iv) implies (iii) and finally, in Part 6, we show that (iii) mplies (ii).

	\underline{\textsc{Part 1}:} Firstly, we claim that $\mathcal{S}_n(a_1,a_2,\ldots,a_n)\subseteq\mathcal{H}_n(a_1,a_2,\ldots,${ }$a_n)$. If $n=1$, there is nothing to prove. If $(\lambda_1;\lambda_2)\in\mathcal{S}_2(a_1,a_2)$, then in particular, $(\lambda_1,\lambda_2)$ is the spectrum of a nonnegative symmetric matrix with diagonal elements $(a_1,a_2)$. Therefore the conditions given in (\ref{eq:n=2NS}) hold and hence $(\lambda_1;\lambda_2)\in\mathcal{H}_2(a_1,a_2)$. Now assume the claim holds for all $n\in\{1,2,\ldots,m-1\}$, $m\geq3$, and consider the case when $n=m$.
	
	Suppose $(\lambda_1;\lambda_2,\ldots,\lambda_m)\in\mathcal{S}_m(a_1,a_2,\ldots,a_m)$, where $\lambda_1\geq\lambda_2\geq\cdots\geq\lambda_m$. Then there exists an $m\times m$ Soules matrix $R=(r_{ij})$ such that the matrix $R\Lambda R^T$---where  $\Lambda:=\mathrm{diag}(\lambda_1,\lambda_2,\ldots,\lambda_m)$---has diagonal elements $(a_1,a_2,\ldots,a_m)$. Assume also that the $a_i$ are labelled so that $a_j$ is the $(j,j)$ entry of $R\Lambda R^T$ and let us label the columns of $R$ as $x=r_1,r_2,\ldots,r_m$. Our aim is to construct two smaller Soules matrices $R_1$ and $R_2$ from $R$ and then apply the inductive hypothesis.
	
	By Theorem \ref{thm:SoulesMatricesCharacterised}, there exists a Soules-type sequence $\mathcal{N}=(\mathcal{N}_1,\mathcal{N}_2,${ }$\ldots,\mathcal{N}_m)$ of partitions of $\{1,2,\ldots,m\}$ such that for each $i\in\{2,3,\ldots,m\}$, $r_i$ is given (up to a factor of $\pm1$) by (\ref{eq:SoulesColumns}). As before, let us write $\mathcal{N}_i=\{\mathcal{N}_{i,1},\mathcal{N}_{i,2},\ldots,\mathcal{N}_{i,i}\}$, $i=1,2,\ldots,m$. Suppose $\mathcal{N}_{2,1}=\{\gamma_1,\gamma_2,\ldots,\gamma_k\}$, where $\gamma_1<\gamma_2<\cdots<\gamma_k$ and $\mathcal{N}_{2,2}=\{\delta_1,\delta_2,\ldots,\delta_{m-k}\}$, where $\delta_1<\delta_2<\cdots<\delta_{m-k}$. Without loss of generality, we may assume that $k<m-1$, since if $k=m-1$, we may relabel the set $\mathcal{N}_{2,1}$ as $\mathcal{N}_{2,2}$ and vice versa. Let $\alpha_1,\alpha_2,\ldots,\alpha_{k-1}$---where $\alpha_1<\alpha_2<\cdots<\alpha_{k-1}$---be those indices in $\{3,4,\ldots,m\}$ such that $\mathcal{N}^*_{\alpha_i},\mathcal{N}^{**}_{\alpha_i}\subseteq\mathcal{N}_{2,1}$. Similarly, Let $\beta_1,\beta_2,\ldots,\beta_{m-k-1}$---where $\beta_1<\beta_2<\cdots<\beta_{m-k-1}$---be those indices in $\{3,4,\ldots,m\}$ such that $\mathcal{N}^*_{\beta_i},\mathcal{N}^{**}_{\beta_i}\subseteq\mathcal{N}_{2,2}$.
	
	Let $S_1$ be the $k\times(k+1)$ submatrix of $R$ obtained by selecting rows $\gamma_1,\gamma_2,\ldots,\gamma_k$ and columns $1,2,\alpha_1,\alpha_2,\ldots,\alpha_{k-1}$.  Let $u$ denote the first column of $S_1$. Similarly, let $S_2$ be the $(m-k)\times(m-k+1)$ submatrix of $R$ obtained by selecting rows $\delta_1,\delta_2,\ldots,\delta_{m-k}$ and columns $1,2,\beta_1,\beta_2,\ldots,\beta_{m-k-1}$. We denote the first column of $S_2$ by $v$. By (\ref{eq:SoulesColumns}), either the second column of $S_1$ is $(||v||/||u||)u$ and the second column of $S_2$ is $-(||u||/||v||)v$ or the second column of $S_1$ is $-(||v||/||u||)u$ and the second column of $S_2$ is $(||u||/||v||)v$. Without loss of generality, we may assume the former, as otherwise we may replace $r_2$ with $-r_2$. Hence we may write
	\[
		S_1=\left[
			\begin{array}{ccc}
				u & \frac{||v||}{||u||}u & T_1
			\end{array}
		\right]
		\:\:\:\text{ and }\:\:\:
		S_2=\left[
			\begin{array}{ccc}
				v & -\frac{||u||}{||v||}v & T_2
			\end{array}
		\right],
	\]
	where $T_1$ and $T_2$ are $k\times(k-1)$ and $(m-k)\times(m-k-1)$ matrices, respectively.
	
	We now define the $(k+1)\times(k+1)$ matrix
	\[
		R_1:=\begin{pmat}[{.....}]
			&& S_1 &&& \cr\-
			||v|| & -||u|| & 0 & 0 & \cdots & 0 \cr
		\end{pmat}
		=\begin{pmat}[{||...}]
			u & \frac{||v||}{||u||}u && T_1 && \cr\-
			||v|| & -||u|| & 0 & 0 & \cdots & 0 \cr
		\end{pmat}
	\]
	and the $(m-k)\times(m-k)$ matrix
	\[
		R_2:=\left[
			\begin{array}{cc}
				\frac{v}{||v||} & T_2
			\end{array}
		\right].
	\]
	Note that $||u||^2+||v||^2=||x||^2=1$ and so $R_1$ and $R_2$ have normalised, positive first columns. Moreover, $R_1$ and $R_2$ are Soules matrices. To see this, consider the sequence $\tilde{\mathcal{N}}=(\tilde{\mathcal{N}}_1,\tilde{\mathcal{N}}_2,\ldots,\tilde{\mathcal{N}}_{k+1})$ of partitions of $\{1,2,\ldots,k+1\}$, where $\tilde{\mathcal{N}}_1=\{\{1,2,\ldots,k+1\}\}$, $\tilde{\mathcal{N}}_2=\{\{1,2,\ldots,k\},\{k+1\}\}$ and for $i\in\{3,4,\ldots,k+1\}$, $\tilde{\mathcal{N}}_{i,j}:=\{s:\gamma_s\in\tilde{\mathcal{M}}_{i,j}\}$, where $\tilde{\mathcal{M}}_i$ is obtained from $\mathcal{N}_{\alpha_{i-2}}$ by removing those sets $\mathcal{N}_{\alpha_{i-2},l}\in\mathcal{N}_{\alpha_{i-2}}$ which are subsets of $\mathcal{N}_{2,2}$. Then, labelling the columns of $R_1$ as
	$
		y=\tilde{r}_1,\tilde{r}_2,\ldots,${ }$\tilde{r}_{k+1},
	$
	we see that
	\[
	 \tilde{r}_i=\pm\frac{1}{\sqrt{||y_\mathcal{N}^{(i)}||^2+||\hat{y}_\mathcal{N}^{(i)}||^2}}\left( \frac{||\hat{y}_\mathcal{N}^{(i)}||}{||y_\mathcal{N}^{(i)}||}y_\mathcal{N}^{(i)}-\frac{||y_\mathcal{N}^{(i)}||}{||\hat{y}_\mathcal{N}^{(i)}||}\hat{y}_\mathcal{N}^{(i)} \right),
	\]
	for all $i=2,3,\ldots,k+1$ and hence $R_1$ is a Soules matrix. Similarly, $R_2$ may be seen to be a Soules matrix by considering the sequence $\mathcal{N}'=(\mathcal{N}_1',\mathcal{N}_2',\ldots,\mathcal{N}_{m-k}')$ of partitions of $\{1,2,\ldots,m-k\}$, where $\mathcal{N}_1'=\{\{1,2,\ldots,${ }$m-k\}\}$  and for $i\in\{2,3,\ldots,m-k\}$, $\mathcal{N}_{i,j}':=\{s:\delta_s\in\mathcal{M}'_{i,j}\}$, where $\mathcal{M}'_i$ is obtained from $\mathcal{N}_{\beta_{i-1}}$ by removing those sets $\mathcal{N}_{\beta_{i-1},l}\in\mathcal{N}_{\beta_{i-1}}$ which are subsets of $\mathcal{N}_{2,1}$.
	
	Now set
	\begin{equation}\label{eq:cDefinition}
		c:=||v||^2\lambda_1+||u||^2\lambda_2
	\end{equation}
	and observe that if
	\[
	\Lambda_1:=\text{diag}(\lambda_1,\lambda_2,\lambda_{\alpha_1},\lambda_{\alpha_2},\ldots,\lambda_{\alpha_{k-1}}),
	\]
	then the diagonal elements of the matrix $R_1\Lambda_1R_1^T$ are $(a_{\gamma_1},a_{\gamma_2},\ldots,${ }$a_{\gamma_k},c)$. To see this, note that for $i=1,2,\ldots,k$, the definitions of $\gamma_i$ and $\alpha_i$ and the structure of $R$ imply
	\[
		\sum_{t=3}^m r_{\gamma_i t}^2 \lambda_t=\sum_{s=3}^{k+1}r_{\gamma_i\alpha_{s-2}}^2\lambda_{\alpha_{s-2}}
	\]
	and hence
	\begin{align*}
		(R_1\Lambda_1 R_1^T)_{ii} &=\sum_{s=1}^{k+1}(R_1)_{is}^2(\Lambda_1)_{ss} \\
		&=\sum_{s=1}^{k+1}(S_1)_{is}^2(\Lambda_1)_{ss} \\
		&=r_{\gamma_i1}^2\lambda_1+r_{\gamma_i2}^2\lambda_2+\sum_{s=3}^{k+1}r_{\gamma_i\alpha_{s-2}}^2\lambda_{\alpha_{s-2}} \\
		&=r_{\gamma_i1}^2\lambda_1+r_{\gamma_i2}^2\lambda_2+\sum_{t=3}^m r_{\gamma_i t}^2 \lambda_t \\
		&=(R\Lambda R^T)_{\gamma_i\gamma_i} \\
		&=a_{\gamma_i}.
	\end{align*}
	In addition,
	\begin{align*}
		(R_1\Lambda_1 R_1^T)_{k+1,k+1} &=\sum_{s=1}^{k+1}(R_1)_{k+1,s}^2(\Lambda_1)_{ss} \\
		&=||v||^2\lambda_1+||u||^2\lambda_2 \\
		&=c.
	\end{align*}
	Therefore, by the inductive hypothesis,
	\begin{equation}\label{eq:SoulesEquivalence1}
		(\lambda_1,\lambda_2,\lambda_{\alpha_1},\lambda_{\alpha_2},\ldots,\lambda_{\alpha_{k-1}})\in\mathcal{H}_{k+1}(a_{\gamma_1},a_{\gamma_2},\ldots,a_{\gamma_k},c).
	\end{equation}
	Similarly, if $\Lambda_2:=\text{diag}(c,\lambda_{\beta_1},\lambda_{\beta_2},\ldots,\lambda_{\beta_{m-k-1}})$, then the diagonal elements of the matrix $R_2\Lambda_2R_2^T$ are $(a_{\delta_1},a_{\delta_2},\ldots,a_{\delta_{m-k}})$. To see this, note that for $i=1,2,\ldots,m-k$, the definitions of $\delta_i$ and $\beta_i$ and the structure of $R$ imply
	\[
		\sum_{t=3}^mr_{\delta_it}^2\lambda_t=\sum_{s=3}^{m-k+1}r_{\delta_i\beta_{s-2}}^2\lambda_{\beta_{s-2}}
	\]
	and hence
	\begin{align*}
		(R_2\Lambda_2R_2^T)_{ii} &=\sum_{s=1}^{m-k}(R_2)_{is}^2(\Lambda_2)_{ss} \\
		&=\frac{v_i^2}{||v||^2}c+\sum_{s=2}^{m-k}(T_2)_{i,s-1}^2\lambda_{\beta_{s-1}} \\
		&=\frac{v_i^2}{||v||^2}c+\sum_{s=3}^{m-k+1}(T_2)_{i,s-2}^2\lambda_{\beta_{s-2}} \\
		&=v_i^2\lambda_1+\frac{||u||^2}{||v||^2}v_i^2\lambda_2+\sum_{s=3}^{m-k+1}(S_2)_{i,s}^2\lambda_{\beta_{s-2}} \\
		&=r_{\delta_i,1}^2\lambda_1+r_{\delta_i,2}^2\lambda_2+\sum_{s=3}^{m-k+1}r_{\delta_i,\beta_{s-2}}^2\lambda_{\beta_{s-2}} \\
		&=r_{\delta_i,1}^2\lambda_1+r_{\delta_i,2}^2\lambda_2+\sum_{t=3}^mr_{\delta_it}^2\lambda_t \\
		&=(R\Lambda R^T)_{\delta_i\delta_i} \\
		&=a_{\delta_i}.
	\end{align*}
	Note also that $c\geq\lambda_2\geq\lambda_{\beta_1}$ and hence the inductive hypothesis gives
	\begin{equation}\label{eq:SoulesEquivalence2}
		(c;\lambda_{\beta_1},\lambda_{\beta_2},\ldots,\lambda_{\beta_{m-k-1}})\in\mathcal{H}_{m-k}(a_{\delta_1},a_{\delta_2},\ldots,a_{\delta_{m-k}}).
	\end{equation}
	Therefore, by Definition \ref{def:HDefinition}, (\ref{eq:SoulesEquivalence1}) and (\ref{eq:SoulesEquivalence2}) imply
	\begin{equation}\label{eq:Direction1Proved}
		(\lambda_1,\lambda_2,\ldots,\lambda_m)\in\mathcal{H}_m(a_1,a_2,\ldots,a_m).
	\end{equation}
	We have now shown that $\mathcal{S}_n(a_1,a_2,\ldots,a_n)\subseteq\mathcal{H}_n(a_1,a_2,\ldots,a_n)$.
		
	\underline{\textsc{Part 2}:} We now claim $\mathcal{H}^*_n(a_1,a_2,\ldots,a_n)\subseteq\mathcal{S}_n(a_1,a_2,\ldots,a_n)$. If $n=1$, there is nothing to prove. If $(\lambda_1;\lambda_2)\in\mathcal{H}_2(a_1,a_2)$, then there exists $\epsilon\geq0$ such that $\lambda_1=a_1+\epsilon$ and $\lambda_2=a_2-\epsilon$. If $(\lambda_1;\lambda_2)\in\mathcal{H}^*_2(a_1,a_2)$, then $\epsilon>0$ and hence the matrix
	\[
		R_0:=\frac{1}{\sqrt{a_1-a_2+2\epsilon }}\left[
			\begin{array}{cc}
				\sqrt{a_1-a_2+\epsilon } & \sqrt{\epsilon } \\
				\sqrt{\epsilon } & -\sqrt{a_1-a_2+\epsilon }
			\end{array}
		\right]
	\]
	is a Soules matrix with
	\[
		R_0\Lambda R_0^T=\left[
			\begin{array}{cc}
				a_1 & \sqrt{\epsilon \left(a_1-a_2+\epsilon \right)} \\
				\sqrt{\epsilon \left(a_1-a_2+\epsilon \right)} & a_2
			\end{array}
		\right],
	\]
	where $\Lambda:=\mathrm{diag}(\lambda_1,\lambda_2)$. Therefore $(\lambda_1;\lambda_2)\in\mathcal{S}_2(a_1,a_2)$. Now assume the claim holds for $n=m-1$, $m\geq3$, and consider the case when $n=m$.
	
	Suppose $(\lambda_1;\lambda_2,\ldots,\lambda_m)\in\mathcal{H}^*_m(a_1,a_2,\ldots,a_m)$, where $\lambda_1\geq\lambda_2\geq\cdots\geq\lambda_m$. Then by Theorem \ref{thm:HSimplification}, there exist $s,t\in\{1,2,\ldots,m\}$, $s<t$, such that
	\begin{multline}\label{eq:NonStar1}
		(\lambda_1;\lambda_2,\ldots,\lambda_{m-1})\in \\ \mathcal{H}_{m-1}(a_1,\ldots,a_{s-1},a_{s+1},\ldots,a_{t-1},a_{t+1},\ldots,a_m,c)
	\end{multline}
	and
	\begin{equation}\label{eq:NonStar2}
		(c;\lambda_m)\in\mathcal{H}_2(a_s,a_t),
	\end{equation}
	where $c:=a_s+a_t-\lambda_m$. It is not difficult to see that in (\ref{eq:NonStar1}) and (\ref{eq:NonStar2}), it is possible to replace $\mathcal{H}$ by $\mathcal{H}^*$. Let us assume the contrary. If 
	\begin{multline*}
		(\lambda_1;\lambda_2,\ldots,\lambda_{m-1})\not\in \\ \mathcal{H}^*_{m-1}(a_1,\ldots,a_{s-1},a_{s+1},\ldots,a_{t-1},a_{t+1},\ldots,a_m,c),
	\end{multline*}
	then there exist partitions
	\begin{gather*}
		\{1,2,\ldots,m-1\}=\{p_1,p_2,\ldots,p_l\}\cup\{q_1,q_2,\ldots,q_{m-l-1}\}, \\
		\{1,2,\ldots,m\}\setminus\{s,t\}=\{r_1,r_2,\ldots,r_{l-1}\}\cup\{s_1,s_2,\ldots,s_{m-l-1}\},
	\end{gather*}
	where $\{r_1,r_2,\ldots,r_{l-1}\}$ may be empty, such that
	\begin{equation}\label{eq:StarProve1}
		(\lambda_{p_1};\lambda_{p_2},\ldots,\lambda_{p_l})\in\mathcal{H}_l(a_{r_1},a_{r_2},\ldots,a_{r_{l-1}},c)
	\end{equation}
	and
	\begin{equation}\label{eq:StarProve2}
		(\lambda_{q_1};\lambda_{q_2},\ldots,\lambda_{q_{m-l-1}})\in\mathcal{H}_{m-l-1}(a_{s_1},a_{s_2},\ldots,a_{s_{m-l-1}}).
	\end{equation}
	In this case, (\ref{eq:StarProve1}) and (\ref{eq:NonStar2}) imply
	\begin{equation}\label{eq:StarProve3}
		(\lambda_{p_1};\lambda_{p_2},\ldots,\lambda_{p_l},\lambda_m)\in\mathcal{H}_{l+1}(a_{r_1},a_{r_2},\ldots,a_{r_{l-1}},a_s,a_t);
	\end{equation}
	however, (\ref{eq:StarProve2}) and (\ref{eq:StarProve3}) contradict the fact that $(\lambda_1;\lambda_2,\ldots,\lambda_m)\in\mathcal{H}^*_m(a_1,a_2,${ }$\ldots,a_m)$.
	Hence
	\begin{multline}\label{eq:StarProve4}
		(\lambda_1;\lambda_2,\ldots,\lambda_{m-1})\in \\ \mathcal{H}^*_{m-1}(a_1,\ldots,a_{s-1},a_{s+1},\ldots,a_{t-1},a_{t+1},\ldots,a_m,c).
	\end{multline}
	Similarly, suppose $(c;\lambda_m)\in\mathcal{H}_2(a_s,a_t)\setminus\mathcal{H}^*_2(a_s,a_t)$ and assume, without loss of generality, that $a_s\geq a_t$. Then $c=a_s$ and $\lambda_m=a_t$, and thus, replacing $c$ by $a_s$ in (\ref{eq:NonStar1}), we have
	\begin{gather*}
		(\lambda_1;\lambda_2,\ldots,\lambda_{m-1})\in\mathcal{H}_{m-1}(a_1,\ldots,a_{t-1},a_{t+1},\ldots,a_m), \\
		(\lambda_m)\in\mathcal{H}_1(a_t),
	\end{gather*}
	again contradicting the fact that $(\lambda_1;\lambda_2,\ldots,\lambda_m)\in\mathcal{H}^*_m(a_1,a_2,\ldots,${ }$a_m)$. Thus
	\begin{equation}\label{eq:StarProve5}
		(c;\lambda_m)\in\mathcal{H}^*_2(a_s,a_t).
	\end{equation}
	
	Applying the inductive hypothesis to (\ref{eq:StarProve4}), there exists an $(m-1)\times(m-1)$ Soules matrix $R_1$, such that the matrix $R_1\Lambda_1R_1^T$---where $\Lambda_1:=\mathrm{diag}(\lambda_1,\lambda_2,\ldots,\lambda_{m-1})$---has diagonal elements
	\[
		(a_1,\ldots,a_{s-1},a_{s+1},\ldots,a_{t-1},a_{t+1},\ldots,a_m,c).
	\]
	Note that if $S$ is a $k\times k$ Soules matrix and $P$ is a $k\times k$ permutation matrix, then $(PR)\Theta(PR)^T=P(R\Theta R^T)P^T$ is nonnegative for every nonnegative diagonal matrix $\Theta$ with non-increasing diagonal entries. Hence $PR$ is a Soules matrix also. Therefore, we may assume, without loss of generality, that $c$ is the $(m-1,m-1)$ entry of $R_1\Lambda_1R_1^T$, since otherwise, we may replace $R_1$ with $PR_1$, where $P$ is a suitable permutation matrix. Similarly, by (\ref{eq:StarProve5}), there exists a $2\times2$ Soules matrix $R_2$ such that the matrix $R_2\Lambda_2 R_2^T$---where $\Lambda_2:=\mathrm{diag}(c,\lambda_m)$---has diagonal elements $(a_s,a_t)$. Let $R_1$ be partitioned as
	\[
		R_1=\left[
			\begin{array}{c}
				U \\ u^T
			\end{array}
		\right],
	\]
	where $u\in\mathbb{R}^{m-1}$ and $U\in\mathbb{R}^{(m-2)\times(m-1)}$ and let $R_2$ be partitioned as
	\[
		R_2=\left[
			\begin{array}{cc}
				v & v'
			\end{array}
		\right]
	\]
	where $v,v'\in\mathbb{R}^2$.
	
	By Lemma \ref{lem:SmigocSDLemma}, for matrices
	\[
		R:=\left[
			\begin{array}{cc}
				U & 0 \\
				vu^T & v'
			\end{array}
		\right]
	\]
	and $\Lambda:=\mathrm{diag}(\lambda_1,\lambda_2,\ldots,\lambda_m)$, the matrix $R\Lambda R^T$ has diagonal elements $(a_1,a_2,\ldots,a_m)$. We will show that $R$ is a Soules matrix. To see this, let $\tilde{\mathcal{N}}=(\tilde{\mathcal{N}}_1,\tilde{\mathcal{N}}_2,\ldots,\tilde{\mathcal{N}}_{m-1})$ be the Soules-type sequence of partitions of $\{1,2,\ldots,m-1\}$ associated with $R_1$ , where $\tilde{\mathcal{N}}_i=\{\tilde{\mathcal{N}}_{i,1},\tilde{\mathcal{N}}_{i,2},\ldots,\tilde{\mathcal{N}}_{i,i}\}$. $R$ may then be seen to be a Soules matrix by considering the Soules-type sequence $\mathcal{N}=(\mathcal{N}_1,\mathcal{N}_2,\ldots,\mathcal{N}_m)$, $\mathcal{N}_i=\{\mathcal{N}_{i,1},\mathcal{N}_{i,2},\ldots,\mathcal{N}_{i,i}\}$, of partitions of $\{1,2,\ldots,m\}$, where for $i\in\{1,2,${ }$\ldots,m-1\}$, $\mathcal{N}_{i,j}$ is given by
	\[
		\left\{
			\begin{array}{ll}
				\mathcal{N}_{i,j}=\tilde{\mathcal{N}}_{i,j} & \text{ if }\: m-1\notin\tilde{\mathcal{N}}_{i,j} \\
				\mathcal{N}_{i,j}=\tilde{\mathcal{N}}_{i,j}\cup\{m\} & \text{ if }\: m-1\in\tilde{\mathcal{N}}_{i,j} \\
			\end{array}
		\right.
	\]
	and $\mathcal{N}_m=\{\{1\},\{2\},\ldots,\{m\}\}$.
	
	Therefore $(\lambda_1;\lambda_2,\ldots,\lambda_m)\in\mathcal{S}_m(a_1,a_2,\ldots,a_m)$. Hence $\mathcal{H}^*_n(a_1,a_2,\ldots,${ }$a_n)\subseteq\mathcal{S}_n(a_1,a_2,\ldots,a_n)$.
	
	\underline{\textsc{Part 3}:} We will now use parts 1 and 2 to show that $\overline{\mathcal{S}}_n(a_1,a_2,\ldots,${ }$a_n)=\mathcal{H}_n(a_1,a_2,\ldots,a_n)$. By definition, if $(\lambda_1;\lambda_2,\ldots,\lambda_n)\in\overline{\mathcal{S}}_n(a_1,${ }$a_2,\ldots,a_n)$, then there exist partitions
	\begin{equation}\label{eq:PartitionForm}
		\begin{array}{c}
			\{1,\ldots,n\}=\{\alpha_1^{(1)},\ldots,\alpha_{n_1}^{(1)}\}\cup\{\alpha_1^{(2)},\ldots,\alpha_{n_2}^{(2)}\}\cup\cdots\cup\{\alpha_1^{(k)},\ldots,\alpha_{n_k}^{(k)}\}, \\
			\{1,\ldots,n\}=\{\beta_1^{(1)},\ldots,\beta_{n_1}^{(1)}\}\cup\{\beta_1^{(2)},\ldots,\beta_{n_2}^{(2)}\}\cup\cdots\cup\{\beta_1^{(k)},\ldots,\beta_{n_k}^{(k)}\},
		\end{array}
	\end{equation}
	such that
	\begin{equation}\label{eq:SSplit}
		\left(\lambda_{\alpha_1^{(i)}};\lambda_{\alpha_2^{(i)}},\ldots,\lambda_{\alpha_{n_i}^{(i)}}\right)\in\mathcal{S}_{n_i}\left(a_{\beta_1^{(i)}},a_{\beta_2^{(i)}},\ldots,a_{\beta_{n_i}^{(i)}}\right) :\hspace{3mm} i=1,2,\ldots,k.
	\end{equation}
	By Part 1, in (\ref{eq:SSplit}), we may replace $\mathcal{S}$ by $\mathcal{H}$ and hence, by Observation \ref{obv:UnionForSoules}, $(\lambda_1;\lambda_2,\ldots,\lambda_n)\in\mathcal{H}_n(a_1,a_2,\ldots,a_n)$.
	
	Conversely, if $(\lambda_1;\lambda_2,\ldots,\lambda_n)\in\mathcal{H}_n(a_1,a_2,\ldots,a_n)$, then there exist partitions of the form (\ref{eq:PartitionForm}) such that
	\begin{equation}\label{eq:HSplit}
		\left(\lambda_{\alpha_1^{(i)}};\lambda_{\alpha_2^{(i)}},\ldots,\lambda_{\alpha_{n_i}^{(i)}}\right)\in\mathcal{H}^*_{n_i}\left(a_{\beta_1^{(i)}},a_{\beta_2^{(i)}},\ldots,a_{\beta_{n_i}^{(i)}}\right) : i=1,2,\ldots,k.
	\end{equation}
	By Part 2, in (\ref{eq:HSplit}), we may replace $\mathcal{H}^*$ by $\mathcal{S}$ and hence, by definition, $(\lambda_1;\lambda_2,\ldots,\lambda_n)\in\overline{\mathcal{S}}_n(a_1,a_2,\ldots,a_n)$.
	
	\underline{\textsc{Part 4}:} We now claim that (ii) implies (iv). If $(\lambda)\in\mathcal{H}_1$, then $\lambda\geq0$ and $(\lambda)$ trivially satisfies $\mathbb{S}_1$. Now assume the claim holds for $n\in\{1,2,\ldots,m-1\}$ and suppose $\sigma:=(\lambda_1,\lambda_2,\ldots,\lambda_m)\in\mathcal{H}_m$, where $\lambda_1\geq\lambda_2\geq\cdots\geq\lambda_m$. Then by Theorem \ref{thm:AntiGuoForSoules}, there exist
	$
		\epsilon\in\left[0,\frac{1}{2}(\lambda_1-\lambda_2)\right]
	$
	and a partition
	\[
		\{3,4,\ldots,m\}=\{p_1,p_2,\ldots,p_{l-1}\}\cup\{q_1,q_2,\ldots,q_{m-l-1}\},
	\]
	where $\{p_1,p_2,\ldots,p_{l-1}\}$ or $\{q_1,q_2,\ldots,q_{m-l-1}\}$ may be empty, such that
	\[
		(\lambda_1-\epsilon,\lambda_{p_1},\lambda_{p_2},\ldots,\lambda_{p_{l-1}})\in\mathcal{H}_l
	\]
	and
	\[
		(\lambda_2+\epsilon,\lambda_{q_1},\lambda_{q_2},\ldots,\lambda_{q_{m-l-1}})\in\mathcal{H}_{m-l}.
	\]
	By the inductive hypothesis, there exist $k$ and $t$ such that
	\[
		\sigma^*_1:=(\lambda_1-\epsilon,\lambda_{p_1},\lambda_{p_2},\ldots,\lambda_{p_{l-1}})
	\]
	and
	\[	
		\sigma^*_2:=(\lambda_2+\epsilon,\lambda_{q_1},\lambda_{q_2},\ldots,\lambda_{q_{m-l-1}})
	\]
	satisfy $\mathbb{S}_k$ and $\mathbb{S}_t$, respectively. Furthermore, by Observation \ref{obv:p_implies_p+1}, we may assume $k=t$, i.e. (in the notation of Section \ref{sec:Sp}) $\sigma_1^*,\sigma_2^*\in\mathcal{Q}_{\mathbb{S}_k}$.
	
	Define also
	\begin{gather*}
		\sigma_1:=(\lambda_1,\lambda_{p_1},\lambda_{p_2},\ldots,\lambda_{p_{l-1}}), \\
		\sigma_2:=(\lambda_2,\lambda_{q_1},\lambda_{q_2},\ldots,\lambda_{q_{m-l-1}}).
	\end{gather*}
	Recalling definitions (\ref{eq:SotoMDef}) and (\ref{eq:SotoNDef}), we have
	$
		\mathcal{M}_{\mathbb{S}_k}(\sigma_1)=\mathcal{M}_{\mathbb{S}_k}(\sigma_1^*)+\epsilon\geq\epsilon
	$
	,
	$
		\mathcal{N}_{\mathbb{S}_k}(\sigma_2)\leq\epsilon
	$
	and
	\[
		\gamma:=\max\{\lambda_1-\mathcal{M}_{\mathbb{S}_k}(\sigma_1),\lambda_2\}\leq\max\{\lambda_1-\epsilon,\lambda_2\}=\lambda_1-\epsilon.
	\]
	Since $\lambda_1\geq\gamma+\mathcal{N}_{\mathbb{S}_k}(\sigma_2)$, $\sigma=(\sigma_1,\sigma_2)$ satisfies $\mathbb{S}_{k+1}$.
	
	\underline{\textsc{Part 5}:} We now claim (iv) implies (iii). The cases $p=1$ and $p=2$ have been dealt with in \cite{UnifiedView}. Now assume that if $\sigma$ satisfies $\mathbb{S}_{p-1}$, then $\sigma$ is C-realisable. The proof of the inductive step is essentially the same as the proof of \cite[Theorem 3.7]{UnifiedView}.
	
	\underline{\textsc{Part 6}:} Finally, we show that (iii) implies (ii). If $(\lambda_1,\lambda_2,\ldots,\lambda_n)$ is C-realisable, then by definition, it may be obtained starting from the $n$ lists $(0),(0),\ldots,(0)$ and using only the operations defined by Observation \ref{obv:Union} and Theorems \ref{thm:PerronIncrease} and \ref{thm:Guo}. By Observation \ref{obv:UnionForSoules}, Lemma \ref{lem:SymmetricPerronLemma}, Theorem \ref{thm:GuoForSoules} and the fact that $(0)\in\mathcal{H}_1$, we see that $(\lambda_1,\lambda_2,\ldots,\lambda_n)\in\mathcal{H}_n$.
\end{proof}

\begin{cor}
	If $\sigma$ is C-realisable, then $\sigma$ is symmetrically realisable.
\end{cor}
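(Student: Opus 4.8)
The plan is to read the Corollary off directly from Theorem \ref{thm:SoulesEquivalence} together with the observation recorded immediately after Definition \ref{def:HDefinition}. Concretely, suppose $\sigma=(\lambda_1,\lambda_2,\ldots,\lambda_n)$ is C-realisable. Then statement (iii) of Theorem \ref{thm:SoulesEquivalence} holds, and the equivalence $\text{(iii)}\Leftrightarrow\text{(ii)}$ gives $\sigma\in\mathcal{H}_n$. By the remark following Definition \ref{def:HDefinition} (an instance of Lemma \ref{lem:SmigocSDLemma}), any list in $\mathcal{H}_n$ is the spectrum of a nonnegative symmetric matrix, so $\sigma$ is symmetrically realisable.

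First I would invoke Theorem \ref{thm:SoulesEquivalence} to pass from C-realisability to membership in $\mathcal{H}_n$; this is the step that carries all the real content, since the implication $\text{(iii)}\Rightarrow\text{(ii)}$ there is proved in Part 6 of its proof by checking that the three building blocks of C-realisability — Observation \ref{obv:Union}, Theorem \ref{thm:PerronIncrease} and Theorem \ref{thm:Guo} — have analogues that preserve $\mathcal{H}_n$, namely Observation \ref{obv:UnionForSoules}, Lemma \ref{lem:SymmetricPerronLemma} and Theorem \ref{thm:GuoForSoules}, starting from the trivial fact that $(0)\in\mathcal{H}_1$. Then I would conclude by noting that membership in $\mathcal{H}_n$ immediately yields a symmetric nonnegative realisation, which is exactly what the recursive construction via Lemma \ref{lem:SmigocSDLemma} produces.

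There is essentially no obstacle here: once Theorem \ref{thm:SoulesEquivalence} is in hand, the Corollary is a one-line deduction. The only point worth stating explicitly is that the symmetric analogue of Theorem \ref{thm:Guo} is open in general, so this Corollary is a genuinely new sufficient condition for the SNIEP rather than a triviality — but its proof, given the machinery already developed, is immediate. Hence the write-up will be a short paragraph citing Theorem \ref{thm:SoulesEquivalence} and Lemma \ref{lem:SmigocSDLemma}.
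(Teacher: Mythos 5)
Your proposal is correct and matches the paper's (implicit) argument exactly: the corollary is stated immediately after Theorem \ref{thm:SoulesEquivalence} precisely because the implication $\text{(iii)}\Rightarrow\text{(ii)}$ together with the remark after Definition \ref{def:HDefinition} (i.e.\ Lemma \ref{lem:SmigocSDLemma}) yields symmetric realisability. Nothing further is needed.
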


We will illustrate Part 1 of the proof of Theorem \ref{thm:SoulesEquivalence} by considering a specific example:

\begin{ex}
	Consider again the list $\sigma=(\lambda_1,\lambda_2,\lambda_3,\lambda_4,\lambda_5)=(7,5,-2,${ }$-4,-6)$. We showed in Example \ref{ex:SoulesExample} that $\sigma\in\mathcal{S}_5$, and in Example \ref{ex:HExample} that $\sigma\in\mathcal{H}_5$.	
	
	Given the Soules matrix $R$ of Example \ref{ex:SoulesExample}, let us follow the proof of Theorem \ref{thm:SoulesEquivalence} to obtain the decomposition given in Figure \ref{fig:SigmaDecomp}. In the notation of the proof, we have $k=2$, $\mathcal{N}_{2,1}=\{\gamma_1,\gamma_2\}=\{1,2\}$, $\mathcal{N}_{2,2}=\{\delta_1,\delta_2,\delta_3\}=\{3,4,5\}$, $\alpha_1=5$, $(\beta_1,\beta_2)=(3,4)$,
	\[
		S_1=
\left[
\begin{array}{ccc}
 \frac{1}{2} & \frac{1}{2} & \frac{1}{\sqrt{2}} \\
 \frac{1}{2} & \frac{1}{2} & -\frac{1}{\sqrt{2}}
\end{array}
\right],\hspace{3mm}
		S_2=
\left[
\begin{array}{cccc}
 \frac{1}{2 \sqrt{2}} & -\frac{1}{2 \sqrt{2}} & \frac{\sqrt{3}}{2} & 0 \\
 \frac{\sqrt{3}}{4} & -\frac{\sqrt{3}}{4} & -\frac{1}{2 \sqrt{2}} & \frac{1}{\sqrt{2}} \\
 \frac{\sqrt{3}}{4} & -\frac{\sqrt{3}}{4} & -\frac{1}{2 \sqrt{2}} & -\frac{1}{\sqrt{2}}
\end{array}
\right],
	\]
	\[
	||u||=\left|\left|\left[ \begin{array}{cc} \frac{1}{2} & \frac{1}{2} \end{array} \right]^T\right|\right|=\frac{1}{\sqrt{2}},\hspace{3mm}
	||v||=\left|\left|\left[ \begin{array}{ccc} \frac{1}{2 \sqrt{2}} & \frac{\sqrt{3}}{4} & \frac{\sqrt{3}}{4} \end{array} \right]^T\right|\right|=\frac{1}{\sqrt{2}},
	\]
	\[
		R_1=\begin{pmat}[{..}]
\frac{1}{2} & \frac{1}{2} & \frac{1}{\sqrt{2}} \cr
\frac{1}{2} & \frac{1}{2} & -\frac{1}{\sqrt{2}} \cr\-
\frac{1}{\sqrt{2}} & -\frac{1}{\sqrt{2}} & 0 \cr
		\end{pmat},\hspace{3mm}
		R_2=\begin{pmat}[{|.}]
\frac{1}{2} & \frac{\sqrt{3}}{2} & 0 \cr
\sqrt{\frac{3}{8}} & -\frac{1}{2 \sqrt{2}} & \frac{1}{\sqrt{2}} \cr
\sqrt{\frac{3}{8}} & -\frac{1}{2 \sqrt{2}} & -\frac{1}{\sqrt{2}} \cr
		\end{pmat}
	\]
	and $c=||v||^2\lambda_1+||u||^2\lambda_2=6$. Thus, in order to show that $(7;5,-2,-4,${ }$-6)\in\mathcal{H}_5(0,0,0,0,0)$, it is sufficient to show that
	\begin{equation}\label{eq:SmallerSpec1}
	 	(7;5,-6)\in\mathcal{H}_3(0,0,\mathbf{6})
	\end{equation}
	and
	\begin{equation}\label{eq:SmallerSpec2}
		(\mathbf{6};-2,-4)\in\mathcal{H}_3(0,0,0).
	\end{equation}
	Since (\ref{eq:SmallerSpec1}) and (\ref{eq:SmallerSpec2}) obey the conditions given in (\ref{eq:n=3NS}), Lemma \ref{lem:n=3} implies that (\ref{eq:SmallerSpec1}) and (\ref{eq:SmallerSpec2}) hold.
\end{ex}

\section{Comparison to the literature}\label{sec:LiteratureComparison}

Over the years, many realisability criteria have been given in the literature which guarantee that a list of real numbers $\sigma$ be the spectrum of a nonnegative matrix. Consider, for example, the list of sufficient conditions given in Table \ref{tab:SufficientConditions}. In this section, we demonstrate that if $\sigma:=(\lambda_1;\lambda_2,\ldots,\lambda_n)$ satisfies any of these criteria, then $\sigma\in\mathcal{H}_n$.

\begin{table}[hbt]
	\centering
	\begin{tabular*}{\textwidth}{@{\extracolsep{\fill}} crcrcc }
		\toprule
		\hspace{2mm} & & Author & Year & See & \hspace{2mm} \\ 
		\midrule
		& 1. & Suleimanova  & 1949 & \cite{Suleimanova1949} & \\
		& 2. & Perfect & 1953 & \cite{Perfect1953} & \\
		& 3. & Ciarlet & 1968 & \cite{Ciarlet1968} & \\
		& 4. & Kellog & 1971 & \cite{Kellog1971} & \\
		& 5. & Salzmann & 1972 & \cite{Salzmann1972} & \\
		& 6. & Fiedler & 1974 & \cite[Theorem 2.1]{Fiedler} & \\
		& 7. & Borobia & 1995 & \cite{Borobia1995} & \\
		& 8. & Soto & 2003 & \cite[Theorem 11]{Soto2003} / $\mathbb{S}_1$ & \\
		& 9. & Soto & 2003 & \cite[Theorem 17]{Soto2003} / $\mathbb{S}_2$ & \\
		\bottomrule
	\end{tabular*}
	\caption{Sufficient conditions for the RNIEP}
	\label{tab:SufficientConditions}
\end{table}

In \cite{Map}, the authors prove that if $\sigma$ satisfies any of the conditions 1--9, then $\sigma$ must satisfy either Condition 7 or Condition 9. Radwan \cite{Radwan} showed that Condition 7 is sufficient for the existence of a symmetric nonnegative matrix with spectrum $\sigma$ and Soto \cite{Soto2006} showed that the same is true of Condition 9. Hence all of the conditions 1--9 are sufficient for the SNIEP. Moreover, in \cite{UnifiedView}, the authors show that if $\sigma$ obeys either Condition 7 or Condition 9, then $\sigma$ is C-realisable. Therefore, by Theorem \ref{thm:SoulesEquivalence}, if $\sigma:=(\lambda_1,\lambda_2,\ldots,\lambda_n)$ satisfies any of the criteria 1--9, then $\sigma\in\mathcal{H}_n$.

Finally, note that $\mathcal{H}_n$ is more general than any of the Conditions 1--9. The list $\sigma:=(25,21,18,16,-10,-10,-10,-10,-10,-10,-10,-10)$ was shown to be C-realisable in \cite{UnifiedView} and was shown to satisfy $\mathbb{S}_3$ in \cite{Soto2013} (hence $\sigma\in\mathcal{H}_{12}$ by Theorem \ref{thm:SoulesEquivalence}), but $\sigma$ does not satisfy any of the conditions 1--9.

\appendix

\bibliographystyle{elsarticle-num}
\bibliography{Bibliography}

\end{document}